\date{}
\newtheorem{theorem}{Theorem}
\newtheorem{definition}{Definition}
\newtheorem{lemma}{Lemma}
\newtheorem{corollary}{Corollary}
\newtheorem{remark}{Remark}
\newtheorem{example}{Example}
\title{Deterministic and Probabilistic Conditions for Finite Completability of Low-Tucker-Rank Tensor}
\author{Morteza Ashraphijuo, Vaneet Aggarwal,~\IEEEmembership{Senior Member,~IEEE}, and Xiaodong Wang,~\IEEEmembership{Fellow,~IEEE}\thanks{\noindent  Morteza Ashraphijuo and Xiaodong Wang are with the Department of Electrical Engineering, Columbia University, NY, email: \{ashraphijuo,wangx\}@ee.columbia.edu. Vaneet Aggarwal is with the School of IE, Purdue University, West Lafayette, IN, email: vaneet@purdue.edu. }}
\begin{document}
\maketitle


\begin{abstract}

We investigate the fundamental conditions on the sampling pattern, i.e., locations of the sampled entries, for finite completability of a low-rank tensor given some components of its Tucker rank. In order to find the deterministic necessary and sufficient conditions, we propose an algebraic geometric analysis on the Tucker manifold, which allows us to incorporate multiple rank components in the proposed analysis in contrast with the conventional geometric approaches on the Grassmannian manifold. This analysis characterizes the algebraic independence of a set of polynomials defined based on the sampling pattern, which is closely related to finite completability of the sampled tensor,  where finite completability simply means that the number of possible completions of the sampled tensor is finite. Probabilistic conditions are then studied and a lower bound on the sampling probability is given, which guarantees that the proposed deterministic conditions on the sampling patterns for finite completability hold with high probability. Furthermore, using the proposed geometric approach for finite completability, we propose a sufficient condition on the sampling pattern that ensures there exists exactly one completion of the sampled tensor. 

\

\begin{IEEEkeywords}
Low-rank tensor completion, finite completion, unique completion, Grassmannian manifold, Tucker manifold, extended Hall's theorem.
\end{IEEEkeywords}

\end{abstract}

\newpage
\section{Introduction}


\IEEEPARstart{T}{ensors} are generalizations of vectors and matrices:  a vector is a first-order tensor and a matrix is a second-order tensor. Most data around us are better represented with multiple dimensions to capture the correlations across different attributes.  For example, a colored image can be considered as a third-order tensor, two of the dimensions (rows and columns) being spatial, and the third being spectral (color); while a colored video sequence can be considered as a fourth-order tensor, with time being the fourth dimension besides spatial and spectral dimensions. Similarly, a colored 3-D MRI image across time can be considered as a fifth-order tensor. In many applications, part of the data may be missing. This paper investigates the fundamental  conditions on the locations of the non-missing entries such that the multi-dimensional data can be recovered in finite and/or unique choices. In particular, we investigate deterministic and probabilistic conditions on the sampling pattern for finite or unique solution to a low-rank tensor completion problem given the sampled tensor and some of its Tucker rank components, i.e., ranks of some of its matricizations.

There are numerous applications of low-rank data completion in various areas including image or signal processing \cite{phase,Image}, data mining \cite{data}, network coding \cite{network}, compressed sensing \cite{lim,sid,gandy}, reconstructing the visual data \cite{visual,liu2016low}, seismic data processing \cite{kreimer,ely20135d,wang2016tensor}, RF fingerprinting \cite{liu2016tensor,7347424}, and reconstruction of cellular data \cite{vaneetcnsm}.

The majority of the literature on matrix and tensor completion are concerned with developing various optimization-based algorithms under some assumptions such as incoherence \cite{jain2013low}, etc., to construct a completion. In particular, low-rank matrix completion has been widely studied and many algorithms based on convex relaxation of rank \cite{candes,candes2,cai}, non-convex optimization \cite{ashraph19low} and alternating minimization \cite{jain2013low}, etc., have been proposed. Also, a generalization of the low-rank matrix completion, which is completion from several low-rank sources has attracted attention recently \cite{hav,parsons,pimentelgroup,pimentel2014sample}. For the tensor completion problem various solutions have been proposed that are based on convex relaxation of rank constraints \cite{kreimer,gandy,tomioka,nuctensor,romera,ashraphijuoc}, alternating minimization \cite{wang2016tensor,liulow2,liu2016low} and other heuristics \cite{7347424,low,low2,goldfarb}.

In the existing literature on optimization-based matrix or tensor completions, in addition to meeting the lower bound on the sampling probability, conditions such as incoherence  \cite{jain2013low,goldfarb,relern}, which constrains the values of the matrix or tensor entries, are required to obtain a completion with high probability. On the other hand, fundamental completability conditions that are independent of the specific completion algorithms have also been investigated. In \cite{charact,ashraphijuo3,kiraly2,ashrphrization} deterministic conditions on the locations of the sampled entries (sampling pattern) have been studied through algebraic geometry approaches on Grassmannian manifold that lead to finite and unique solutions to the matrix completion problem, where finite completability simply means that the number of possible completions of the sampled tensor is finite. Specifically, in \cite{charact} a deterministic sampling pattern is proposed that is necessary and sufficient for finite completability of the sampled matrix of the given rank. Such an algorithm-independent condition can lead to a much lower sampling rate than that is required by the optimization-based completion algorithms. For example, the required number of samples per column in \cite{jain2013low} is on the order of $\mathcal{O}(\log(n)r^{2.5}\log(\|\mathbf{X}\|_{F}))$, where $\mathbf{X}$ is the unknown matrix with $n$ rows and of rank $r$, while the required number of samples per column in \cite{charact} is on the order of $\mathcal{O}(\max\{\log(n),r\})$. The analysis on Grassmannian manifold in \cite{charact} is not capable of incorporating more than one rank constraint, and therefore this method is not efficient for solving the same problem for a tensor given multiple rank components. In this paper, we propose a geometric analysis on Tucker manifold to obtain deterministic and probabilistic conditions that lead to finite or unique completability for low-rank tensors when multiple rank components are given. Moreover, other related problems have been studied using algebraic geometry analysis, including high-rank matrix completion \cite{bahigh}, rank estimation \cite{ashraphijuo5}, and subspace clustering with missing data  \cite{pianmmple,ashraphijuoustering,pimentelc4,yarse,ashrximation,pimentelc3}. 

This work is inspired by \cite{charact}, where the analysis on Grassmannian manifold is proposed for a single-view matrix. Specifically, in \cite{charact}  a novel approach is proposed to consider the rank factorization of a matrix and to treat each observed entry as a polynomial in terms of the entries of the components of the rank factorization. Then, under the genericity assumption, the algebraic independence among the mentioned polynomials is studied. In this paper, we consider the low-Tucker-rank tensor and follow the general approach that is similar to that in \cite{charact}. We mention some of the main differences: (i) geometry of the manifold, (ii) the equivalence class for the core tensor and consequently (iii) the canonical core tensor, (iv) structure of the polynomials, etc. are fundamentally different from those in \cite{charact}. Moreover, (v) the idea of using more than one rank constraint simultaneously in the algebraic geometry approach is also new. Hence, the manifold structure for the low-Tucker-rank tensor is fundamentally different from the Grassmannian manifold and we need to develop almost every step anew.

Tucker decomposition is a well-known method to represent a tensor \cite{Tuck,SVD,Tuckermanifold}.  In this paper, we use this decomposition to represent the sparsity of a tensor and use Tucker rank to model the low-rank structure of the tensor. There are several other well-known decompositions of a tensor as well, including polyadic decomposition \cite{ten,kruskal}, tensor-train decomposition \cite{oseledets,holtz}, hierarchical Tucker representation \cite{backll,hack9new}, tubal rank decomposition \cite{kilmer2013third} and others. 


This paper focuses on the low-rank tensor completion problem, given a portion of the rank vector of the tensor. Specifically, we investigate the following three problems:

\begin{itemize}
	\item {\bf Problem (i):} Characterizing the necessary and sufficient conditions on the sampling pattern to have finitely many tensor completions for the given rank.
	

To solve this fundamental problem, we propose a geometric analysis framework on Tucker manifold. Specifically, we obtain a set of polynomials based on the location of the sampled entries in tensor and use Bernstein's theorem \cite{Bernstein,schaeffer1941inequalities} to identify the condition on the sampling pattern for ensuring sufficient number of algebraically independent polynomials in the mentioned set. Given any nonempty proper subset of the Tucker rank vector, this analysis leads to the necessary and sufficient condition on the sampling patterns for finite completability of the tensor. Given the entire Tucker rank vector this condition is sufficient for finite completability.

	\item {\bf Problem (ii):} Characterizing conditions on the sampling pattern to ensure that there is exactly one completion for the given rank.
	
We use our proposed geometric analysis for finite completability of low-rank tensors to obtain a sufficient conditions on the sampling patterns to ensure unique completability, which is milder than the sufficient condition for unique completability obtained through matricization analysis and applying the matrix method in \cite{charact}.

	\item {\bf Problem (iii):} If the elements in the tensor are sampled independently with probability $p$, what are the conditions on $p$ such that the conditions in Problems (i) and (ii) are satisfied with high probability? 
	
 We bound the number of needed samples to ensure the proposed sampling patterns for finite and unique tensor completability hold with high probability. Even though we follow a similar approach to \cite{charact} for the matrix case, we develop a generalization of Hall's theorem for bipartite graphs which is needed to prove the correctness of the bounds for both the tensor and the matrix cases. Moreover, it is seen that our proposed analysis on Tucker manifold leads to a much lower sampling rate than the corresponding analysis on Grassmannian manifold for both finite and unique tensor completions. 


\end{itemize}

The remainder of this paper is organized as follows. In Section \ref{notationsx}, some preliminaries and notations are presented, and also an example is given that illustrates the advantage of tensor analysis over analyzing matricizations of a tensor. In Section \ref{sec2}, Problem (i) is studied and the sampling patterns that ensure finite completions are found using tensor algebra. In Section \ref{corrcetion}, we study Problem (iii) for the case of finite completion  and the key to solving this problem is the proof of the generalized Hall's theorem, which is an independent result in graph theory.  Section \ref{sec:uniq} considers Problem (ii) to give a sufficient condition on the sampling pattern for unique completability. Further, Problem (iii) for unique completion is also studied. Some numerical results are provided in Section \ref{simulations} to compare the sampling rates for finite and unique completions based on our proposed tensor analysis versus the matricization method. Finally, Section \ref{conc} concludes the paper.


\section{Background}\label{notationsx}
\subsection{Preliminaries and Notations}\label{notations}


In this paper, it is assumed that a $d^{\text{th}}$-order tensor $\mathcal{U} \in \mathbb{R}^{n_1  \times \cdots \times n_d}$ is chosen generically from the manifold of $n_1 \times \dots \times n_d$ tensors of the given Tucker rank (will be explained rigorously later). For the sake of simplicity in notation, define $N \triangleq \left( \Pi_{j=1}^{d} \  n_j \right)$ and $N_{-i} \triangleq \frac{N}{n_i}$. Also, for any real number $x$, define $x^+\triangleq\max\{0,x\}$. Let $\mathbf{U}_{(i)} \in \mathbb{R}^{n_i \times {N_{-i}}}$ be the $i$-th matricization of the tensor $\mathcal{U}$ such that $\mathcal{U}(\vec{x}) = {\mathbf{U}}_{(i)}(x_i,\mathcal{M}_{(i)} (x_1,\ldots,x_{i-1},x_{i+1},\ldots,x_d))$, where $\mathcal{M}_{(i)}$ is an arbitrary bijective mapping $\mathcal{M}_{(i)}: (x_1,\ldots,x_{i-1},x_{i+1},\ldots,x_d) \rightarrow \{1,2,\dots, N_{-i}\}$ and $\mathcal{U}(\vec{x})$ represents an entry of the tensor $\mathcal{U}$  with coordinate $\vec{x}=(x_1,\dots,x_d)$.

Given $\mathcal{U} \in \mathbb{R}^{n_1  \times \cdots \times n_d}$ and $\mathbf{X} \in \mathbb{R}^{n_i \times n_i^{\prime}}$, $\mathcal{U^{\prime}}  \triangleq  \mathcal{U} \times_i \mathbf{X} \in \mathbb{R}^{n_1 \times \cdots \times n_{i-1} \times n_i^{\prime} \times n_{i+1} \times \cdots \times n_d}$ is defined as
\begin{eqnarray}
\mathcal{U^{\prime}} (x_1,\cdots,x_{i-1},k_i,x_{i+1},\cdots,x_d) \triangleq \sum_{x_i=1}^{n_i} \mathcal{U} (x_1,\cdots,x_{i-1},x_i,x_{i+1},\cdots,x_d) \mathbf{X}(x_i,k_i).
\end{eqnarray}
Throughout this paper, we use Tucker rank as the rank of a tensor, which is defined as $\text{rank} (\mathcal{U})=(r_1,\ldots,r_d)$ where $r_i = \text{rank}(\mathbf{U}_{(i)})$. The Tucker decomposition of a tensor $\mathcal{U}$ is given by
\begin{eqnarray}\label{TuckTuck}
\mathcal{U} = \mathcal{C} \times_{i=1}^d \mathbf{T}_i,
\end{eqnarray}
where $\mathcal{C} \in \mathbb{R}^{r_1 \times \cdots \times r_d}$ is the core tensor and $\mathbf{T}_i \in \mathbb{R}^{r_i \times n_i}$ are $d$ orthogonal matrices. Then, \eqref{TuckTuck} can be written as
\begin{eqnarray}\label{tuckertu}
\mathcal{U}(\vec{x}) =  \sum_{k_1=1}^{r_1} \cdots \sum_{k_{d}=1}^{r_{d}} \mathcal{C}(k_1,\ldots,k_d) \mathbf{T}_1(k_1,x_1)  \dots \mathbf{T}_d(k_d,x_d).
\end{eqnarray}

The space of fixed Tucker-rank tensors is a manifold and the dimension of this manifold is shown in \cite{Tuckermanifold} to be 
$\sum_{i=1}^{d} \left(n_i \times r_i  - r_i^2\right)  + \Pi_{i=1}^{d} \ r_i$. Denote $\Omega$ as the binary sampling pattern tensor that is of the same size as $\mathcal{U}$ and $\Omega(\vec{x})=1$ if $\mathcal{U}(\vec{x})$ is observed and  $\Omega(\vec{x})=0$ otherwise. For each subtensor $\mathcal{U}^{\prime} $ of the tensor $\mathcal{U}$, define $N_{\Omega}({\mathcal{U}^{\prime}})$ as the number of observed entries in $\mathcal{U}^{\prime}$ according to the sampling pattern $\Omega$. 



\subsection{Problem Statement and A Motivating Example}\label{example}


We are interested in finding deterministic  conditions on the sampling pattern tensor $\Omega$ under which there are infinite, finite, or unique completions of the sampled tensor $\mathcal{U}$ that satisfy $\text{rank}(\mathcal{U})=(r_1,r_2,\dots,r_d)$. Moreover, we are interested in finding probabilistic sampling strategies that ensure the obtained conditions for finite and unique completability hold, respectively, with high probability. The matrix version of this problem has been treated in \cite{charact}. In this paper, we investigate this problem for general order tensors.


In this subsection, we intend to compare the following two approaches in an example to emphasize the necessity of our analysis for general order tensors: (i) analyzing each matricization individually with the rank constraint of the corresponding matricization, (ii) analyzing via Tucker decomposition. In particular, we will show via an example that analyzing each of the matricizations separately is not enough to  guarantee finite completability when multiple rank components are given. On the other hand, we show that for the same example Tucker decomposition ensures finite completability. Hence, this example illustrates that matricization analysis does not take advantage of the full information of  given Tucker rank and thus fails to provide a necessary and sufficient condition for finite completability when more than one component of the rank vector is given. 



Consider a $3^{\text{rd}}$-order tensor $\mathcal{U} \in \mathbb{R}^{2 \times 2 \times 2}$ with Tucker rank $(1,1,1)$. First, we show that having any $4$ entries of $\mathcal{U}$, there are infinitely many completions of any matricization with the corresponding rank constraint. Hence, along each dimension there exist a set of infinite completions given the corresponding rank constraint. Note that the analysis on Grassmannian manifold in \cite{charact} is not capable of incorporating more than one rank constraint. However, as we show it is possible that the intersection of the mentioned three infinite sets is a finite set and that is why we need an analysis that is able to incorporate more than one rank constraint. Without loss of generality, it suffices to show the claim only for its first matricization. Therefore, the claim reduces to the following statement: 

{\it {\bf Statement:} Having any $4$ entries of a rank-$1$ matrix $\mathbf{U} \in \mathbb{R}^{2 \times 4}$, there are infinitely many completions for it.}

In order to prove the above statement, we need to consider the following four possible scenarios:
\begin{enumerate}[label=(\roman*)]

\item The $4$ observed entries are in a row. In this case, clearly, there are infinitely many completions for the other row as it can be any scalar multiplied by the first row.

\item The $4$ observed entries are such that there is a column in which there is no observed entries. In this case, there are infinitely many completions for this column as it can be any scalar multiplied by the other columns.

\item The $4$ observed entries are such that there is one observed entry in each column, and also each row has exactly two observed entries. Assume that the two observed entries in the second row are the pair $(a,b)$. In this case, for every pair $(ka,kb)$ as the value of the two non-observed entries of the first row (where $k$ is an arbitrary scalar) there is a unique completion for the rest of the entries. As a result, there are infinitely many completions for this matrix.

\item The $4$ observed entries are such that there is one observed entry in each column, and also the first and second rows have $3$ and $1$ observed entries, respectively. In this case, for each value of the only non-observed entry of the first row there is a unique completion. Therefore, there are infinitely many completions for this matrix.

\end{enumerate}

Assume that the entries $\mathcal{U}(1,1,1)$, $\mathcal{U}(2,1,1)$, $\mathcal{U}(1,2,1)$, and $\mathcal{U}(1,1,2)$ are observed. Now, we take advantage of all elements of Tucker rank simultaneously, in order to show there are only finitely many tensor completions. Using Tucker decomposition \eqref{TuckTuck}, and given the rank is $(1,1,1)$, without loss of generality, assume that the scalar $\mathcal{C}=1$ and $\mathbf{T}_1=(x,x^{\prime})$, $\mathbf{T}_2=(y,y^{\prime})$ and $\mathbf{T}_3=(z,z^{\prime})$, and then the following equalities hold
\begin{align}
\mathcal{U}(1,1,1) &= xyz, & \mathcal{U}(2,2,1) &= x^{\prime}y^{\prime}z, \\ \nonumber
\mathcal{U}(2,1,1) &= x^{\prime}yz,  & \mathcal{U}(2,1,2) &= x^{\prime}yz^{\prime} , \\ \nonumber
\mathcal{U}(1,2,1) &= xy^{\prime}z, & \mathcal{U}(1,2,2) &= xy^{\prime}z^{\prime}, \\ \nonumber
\mathcal{U}(1,1,2) &= xyz^{\prime},  & \mathcal{U}(2,2,2) &= x^{\prime}y^{\prime}z^{\prime}. \nonumber
\end{align}
The unknown entries can be determined uniquely in terms of the $4$ observed entires as
\begin{eqnarray}
\mathcal{U}(2,2,1) &=& x^{\prime}y^{\prime}z = \frac{\mathcal{U}(2,1,1)\mathcal{U}(1,2,1)}{\mathcal{U}(1,1,1)}, \\ \nonumber
\mathcal{U}(2,1,2) &=& x^{\prime}yz^{\prime} = \frac{\mathcal{U}(2,1,1)\mathcal{U}(1,1,2)}{\mathcal{U}(1,1,1)}, \\ \nonumber
\mathcal{U}(1,2,2) &=& xy^{\prime}z^{\prime} = \frac{\mathcal{U}(1,2,1)\mathcal{U}(1,1,2)}{\mathcal{U}(1,1,1)}, \\ \nonumber
\mathcal{U}(2,2,2) &=& x^{\prime}y^{\prime}z^{\prime} = \frac{\mathcal{U}(2,1,1)\mathcal{U}(1,2,1)\mathcal{U}(1,1,2)}{\mathcal{U}(1,1,1)\mathcal{U}(1,1,1)}. \nonumber
\end{eqnarray}
Therefore, considering the Tucker decomposition, there is only one (finite) completion(s) having this particular $4$ observed entries as above. Note that only given $r_2=r_3=1$, it can be verified using Tucker decomposition similarly that the completion is still unique.

\section{Deterministic Conditions for Finite Completability}\label{sec2}

This section characterizes the connection between the sampling pattern and the number of solutions of a low-rank tensor completion. In Section \ref{indepbernbef}, we define a polynomial based on each observed entry. Then, for a given subset of the rank components we transform the problem of finite completability of $\mathcal{U}$ to the problem of finite completability of the core tensor in the Tucker decomposition of $\mathcal{U}$. In Section \ref{indepbern}, we propose a geometric analysis on Tucker manifold, by defining a structure for the core tensor of the Tucker decomposition such that we can determine if two core tensors span the same space. In Section \ref{consttensdef}, we construct a constraint tensor based on the sampling pattern $\Omega$. This tensor is useful for analyzing the algebraic independency of a subset of polynomials among all defined polynomials. In Section \ref{algebraind}, we show the relationship between the number of algebraically independent polynomials in the mentioned set of polynomials and finite completability of the sampled tensor. Finally, Section \ref{sec4} characterizes finite completability in terms of the sampling pattern instead of the algebraic variety for the defined set of polynomials. 




\subsection{Condition for Finite Completability Given the Core Tensor}\label{indepbernbef}

Assume that the sampled tensor is $\mathcal{U}\in \mathbb{R}^{n_1 \times n_2 \times \cdots  \times n_d }$ and rank components $\{r_{j+1},\dots,r_d\}$ are given, where $j \in \{1,2,\dots,d-1\}$ is an arbitrary fixed number. Without loss of generality assume that $r_{j+1} \geq \dots \geq r_d$ throughout the paper.  Define $\mathbb{P}_0$ as the Lebesgue measure on $\mathbb{R}^{r_1 \times r_2 \times \dots \times r_d }$ and $\mathbb{P}_i$ as the Lebesgue measure on $\mathbb{R}^{r_1 \times n_1 }$, $i = 1,\dots , d$. We assume that $\mathcal{U}$ is chosen generically from the manifold corresponding to rank vector $(r_1,\dots,r_d)$, or in other words, the entries of $\mathcal{U}$ are drawn independently with respect to Lebesgue measure on the corresponding manifold. Hence, any statement that holds for $\mathcal{U}$, it basically holds for almost every (with probability one) tensor of the same size and Tucker-rank with respect to the product measure $\mathbb{P}_0 \times \mathbb{P}_1 \times \dots \times \mathbb{P}_{d}$.

Let the $d^{\text{th}}$-order tensor $\mathcal{C} \in \mathbb{R}^{n_1 \times n_2 \times \cdots \times n_j \times r_{j+1} \times r_{j+2} \times \cdots \times r_d }$  be a core tensor of the sampled tensor $\mathcal{U} \in \mathbb{R}^{n_1 \times n_2 \times \cdots \times n_d }$. Then, there exist full-rank matrices $\mathbf{T}_i$'s with $\mathbf{T}_i \in \mathbb{R}^{r_i \times n_i}$ such that
\begin{eqnarray}\label{nonlinear2}
\mathcal{U} = \mathcal{C} \times_{i=j+1}^d \mathbf{T}_i,
\end{eqnarray}
or equivalently
\begin{eqnarray}\label{tuckerj}
\mathcal{U}(\vec{x}) =  \sum_{k_{j+1}=1}^{r_{j+1}} \cdots \sum_{k_{d}=1}^{r_{d}} \mathcal{C}(x_1,\dots, x_j,k_{j+1},\ldots,k_d) \mathbf{T}_{j+1}(k_{j+1},x_{j+1})  \dots \mathbf{T}_d(k_d,x_d).
\end{eqnarray}

For notational simplicity, define $\mathbb{T}=(\mathbf{T}_{j+1},\dots,\mathbf{T}_{d})$. Figure \ref{fig0} represents a Tucker decomposition for a $3^{\text{rd}}$-order tensor given the second and third components of its rank vector.

 \begin{figure}[h]
	\centering
		{\includegraphics[width=8.5cm]{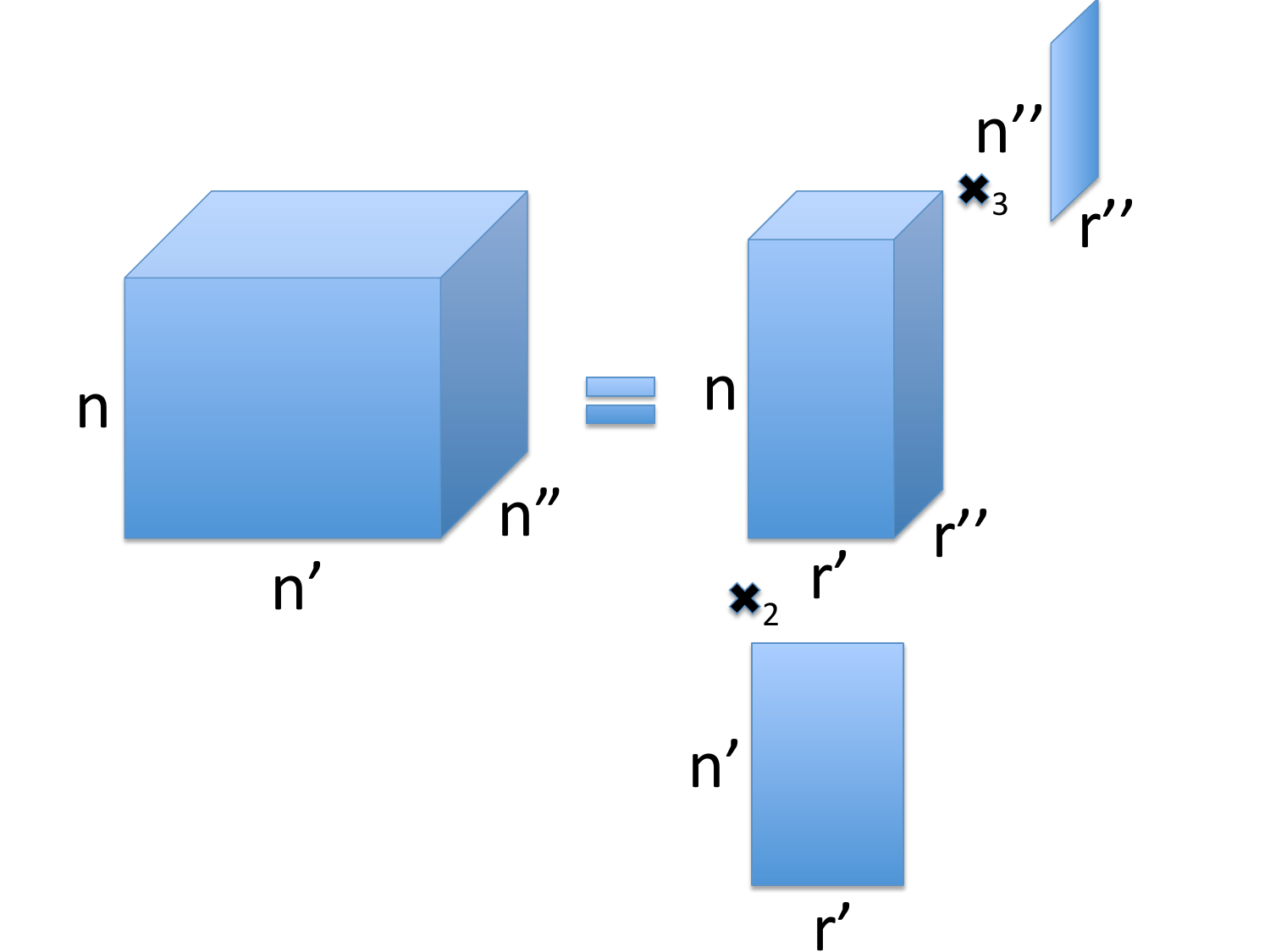}}
	\caption{  Tucker decomposition with $j=1$ and $d=3$.}
	\label{fig0}\vspace{-4mm}
\end{figure}
 
Here, we briefly mention some key points to highlight the fundamentals of our proposed analysis.
 
\begin{itemize}
\item {\bf Note $1$}: As it can be seen from  \eqref{tuckerj}, any observed entry $\mathcal{U}(\vec{x})$ results in an equation that involves $\Pi_{i=j+1}^{d} r_i$ entries of $\mathcal{C}$ and also $r_i$ entries of $\mathbf{T}_i$, $i=j+1,\dots,d$. Considering the entries of core tensor $\mathcal{C}$ and tuple $\mathbb{T}$ as variables (right-hand side of  \eqref{tuckerj}), each observed entry results in a polynomial in terms of these variables.

\item {\bf Note $2$}: For any observed entry $\mathcal{U}(\vec{x})$, the tuple $(x_1,\dots,x_j)$ specifies the coordinates of the $\Pi_{i=j+1}^{d} r_i$ entries of $\mathcal{C}$ that are involved in the corresponding polynomial.

\item {\bf Note $3$}: For any observed entry $\mathcal{U}(\vec{x})$, the value of $x_i$ specifies the column of the $r_i$ entries of $\mathbf{T}_i$ that are involved in the corresponding polynomial, $i=j+1,\dots,d$.

\item {\bf Note $4$}: Given all observed entries $\{\mathcal{U}(\vec{x}): \Omega(\vec{x}) = 1 \}$, we are interested in finding the number of possible solutions in terms of entries of $(\mathcal{C},\mathbb{T})$ (infinite, finite or unique) via investigating the algebraic independence among these polynomials.

\item {\bf Note $5$}: Note that it can be concluded from Bernstein's theorem  \cite{Bernstein} that in a system of $n$ polynomials in $n$ variables with each consisting of a given set of monomials, the $n$ polynomials are algebraically independent with probability one with respect to the corresponding probability measure, and therefore there exist only finitely many solutions. However, in the structure of the polynomials in our model, the set of involved monomials  are different for different set of polynomials, and therefore to ensure algebraically independency we need to have for any selected subset of the original $n$ polynomials, the number of involved variables should be more than the number of selected polynomials.
\end{itemize}
 
Given $\mathcal{C}$, we are interested to find a subset of the mentioned polynomials that guarantees tuple $\mathbb{T}$ can be determined finitely. The following definition will be used to determine the number of involved variables in a set of polynomials.
 
{\bf Definition 1}: For any $i \in \{j+1,\dots,d\}$ and nonempty $ \mathcal{S}_i \subseteq \{1,\dots,n_i\}$, define $\mathcal{U}^{(\mathcal{S}_{i})}$ as a set containing the locations of the entries of $|\mathcal{S}_i|$ rows (corresponding to the elements of $\mathcal{S}_i$) of $\mathbf{U}_{(i)}$. Moreover, define $\mathcal{U}^{(\mathcal{S}_{j+1},\dots,\mathcal{S}_{d})} = \mathcal{U}^{(\mathcal{S}_{j+1})}\cup \dots \cup \mathcal{U}^{(\mathcal{S}_{d})}$. Let $\tau$ be a subset of the locations of the entries of $\mathcal{U}$. Then, $\mathcal{U}^{(\mathcal{S}_{j+1},\dots,\mathcal{S}_{d})} $ is called the minimal hull of $\tau$ if any $\mathcal{U}^{(\mathcal{S}_{i})}$ includes exactly only those rows of $\mathbf{U}_{(i)}$ that include at least one of the locations of the entries in $\tau$.

\begin{example}
Consider a tensor $\mathcal{U} \in \mathbb{R}^{4 \times 4 \times 4}$ with $j=1$, $r_2 = 2$ and $r_3=3$. Then we have
\begin{eqnarray}\label{exequa}
\mathcal{U}(\vec{x}) =  \sum_{k_{2}=1}^{2}  \sum_{k_{3}=1}^{3} \mathcal{C}(x_1,k_{2}, k_3) \mathbf{T}_{2}(k_{2},x_{2})   \mathbf{T}_3(k_3,x_3).
\end{eqnarray}
where $\mathcal{C} \in \mathbb{R}^{4 \times 2 \times 3}$, $\mathbf{T}_2 \in \mathbb{R}^{2 \times 4}$ and $\mathbf{T}_3 \in \mathbb{R}^{3 \times 4}$. Define $\tau = \{(1,2,2),(2,2,3),(3,4,2)\}$. Hence, Among the four rows of the second matricization of $\mathcal{U}$, i.e. $\mathbf{U}_{(2)}$, row numbers $2$ and $4$ include at least one entry in $\tau$ (and row numbers $2$ and $3$ for $\mathbf{U}_{(3)}$). Then, for $\mathcal{S}_2=\{2,4\}$ $\mathcal{S}_3=\{2,3\}$, $\mathcal{U}^{(\mathcal{S}_2,\mathcal{S}_3)}$ is the minimal hull of $\tau$. Note that due to the definition, the minimal hull is unique.
\end{example}

\begin{remark}\label{minhull}
{\rm Consider any set of polynomials $\{p_1,\dots,p_k\}$ in form of \eqref{tuckerj}. Given the core tensor $\mathcal{C}$ these polynomials are in terms of entries of $\mathbb{T}$ and let $\tau$ be the set of corresponding entries to these polynomials in $\mathcal{U}$ and $\mathcal{U}^{(\mathcal{S}_{j+1},\dots,\mathcal{S}_{d})}$ be the minimal hull of $\tau$. Let $\mathcal{S}$ denote the set of all variables (entries of $\mathbb{T}$) that are involved in at least one of the polynomials $\{p_1,\dots,p_k\}$. Recall that according to Note $3$, if an entry of $\mathbf{T}_i$ is involved in a polynomial, all entries of the column that includes that entry are also involved in that polynomial. Therefore, $|\mathcal{S}|=\sum_{i=j+1}^{d} |\mathcal{S}_i| r_i$. } {  \hfill  \qedsymbol}
\end{remark}

Note that each observed entry results in a scalar equation of the form of \eqref{tuckerj}. Given the core tensor $\mathcal{C}$, we need at least $\sum_{i=j+1}^{d} \left( n_ir_i \right)$ polynomials to ensure the number of possible tuples $\mathbb{T}$ is not infinite since the number of variables (entries of $\mathbb{T}$) is $\sum_{i=j+1}^{d} \left( n_ir_i \right)$ in total. On the other hand, the $\sum_{i=j+1}^{d} \left( n_ir_i \right)$ mentioned polynomials should be algebraically independent to ensure the finiteness of tuples $\mathbb{T}$ since any algebraically independent polynomial reduces the dimension of the set of solutions by one. To ensure this independency, any subset of $t$ polynomials of the set of polynomials corresponding to the $\sum_{i=j+1}^{d} \left( n_ir_i \right)$ observed entries, should involve at least $t$ variables. The following assumption will be used frequently as we show it satisfies the mentioned property.

\noindent{\underline{\it{Assumption $A_j$}}}: Anywhere that this assumption is stated, there exist $\sum_{i=j+1}^{d} \left( n_ir_i \right)$ observed entries such that for any nonempty $ \mathcal{S}_i \subseteq \{1,\dots,n_i\}$ for $i \in \{j+1,\dots,d\}$, $\mathcal{U}^{(\mathcal{S}_{j+1},\dots,\mathcal{S}_{d})} $ includes at most $\sum_{i=j+1}^{d} |\mathcal{S}_i|r_i$ of the mentioned $\sum_{i=j+1}^{d} \left( n_ir_i \right)$ observed entries.

\begin{example}
Consider a tensor $\mathcal{U} \in \mathbb{R}^{3 \times 3 \times 3}$ with $j=1$, $r_2 = 1$ and $r_3=2$. Define the following two sets of observed entries each including $\sum_{i=j+1}^{d} \left( n_ir_i \right) = 9$ entries
\begin{eqnarray}
\mathcal{I}_1 = \{(1,1,1),(1,2,1),(1,1,3),(2,1,1),(2,2,1),(2,2,3),(2,3,1),(2,3,2),(3,1,1)\}, \nonumber \\
\mathcal{I}_2 = \{(1,1,1),(1,1,2),(1,2,1),(1,1,3),(2,2,2),(2,2,3),(2,3,1),(2,3,2),(3,3,3)\}. \nonumber
\end{eqnarray}
First, we can show that $\mathcal{I}_1$ does not satisfy Assumption $A_1$. For $\mathcal{S}_2 = \{1,2\}$ and $\mathcal{S}_3= \{1\}$ we have 
\begin{eqnarray}
\{(1,1,1),(1,2,1),(2,1,1),(2,2,1),(3,1,1)\} \subset \mathcal{U}^{(\mathcal{S}_2,\mathcal{S}_3)}. \nonumber
\end{eqnarray}
Hence, $\mathcal{U}^{(\mathcal{S}_2,\mathcal{S}_3)}$ includes 5 entries belonging to $\mathcal{I}_1$ and $\sum_{i=j+1}^{d} |\mathcal{S}_i|r_i = 4 < 5$. Therefore, $\mathcal{I}_1$ does not satisfy Assumption $A_1$ since there exists $(\mathcal{S}_2,\mathcal{S}_3)$ that violates the mentioned condition.

Second, it is easy to verify that $\mathcal{I}_2$ satisfies Assumption $A_1$ by checking all possible pairs $(\mathcal{S}_2,\mathcal{S}_3)$.
\end{example}

\begin{remark}\label{remthm5}
Assume that each column of $\mathbf{U}_{(1)}$ includes at least one observed entry, and also $\sum_{i=j+1}^{d} \left( n_ir_i \right) < n_{j+1}\dots n_d$, for some $j \in \{1,\dots,d-1\}$. Then, for any tuple $ (x_2,x_3,\dots,x_d)$ that $x_i \in \{1,\dots,n_i\}$, there exists at least one observed entry among the set 
\begin{eqnarray}
\{(1,x_2,x_3,\dots,x_d),(2,x_2,x_3,\dots,x_d),\dots,(n_1,x_2,x_3,\dots,x_d)\}.
\end{eqnarray}
Hence, there exist $\sum_{i=j+1}^{d} \left( n_ir_i \right)$ observed entries that satisfy Assumption $A_j$. This is because all possible tuples $(x_{j+1},\dots,x_d)$ are available to be selected. For example, assuming that each column of $\mathbf{U}_{(1)}$ includes at least one observed entry and $\sum_{i=2}^{d} \left( n_ir_i \right) < n_{2}\dots n_d$, we can choose $\sum_{i=2}^{d} \left( n_ir_i \right)<n_2\dots n_d=N_{-1}$ observed entries in different columns of $\mathbf{U}_{(1)}$ to satisfy Assumption $A_1$, by choosing either zero or one observed entry in each column.
\end{remark}

Given the core tensor, the following lemma characterizes the necessary and sufficient condition on observed entries that leads to finite completability.

\begin{lemma}\label{Ber}
Assume that in \eqref{nonlinear2} the core tensor $\mathcal{C} \in \mathbb{R}^{n_1  \times \cdots \times n_j \times r_{j+1} \times \cdots \times r_d }$ is given and $\mathbf{T}_i \in \mathbb{R}^{r_i \times n_i}$ are variables. Then, for almost every $\mathcal{U}$ with probability one, there are at most finitely many possible tuples $\mathbb{T}$ that satisfy \eqref{nonlinear2} if and only if Assumption $A_j$ holds.
\end{lemma}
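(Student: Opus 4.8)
The plan is to turn finite completability of $\mathbb{T}$ into a question about algebraic (in)dependence of the polynomials attached to the observed entries, and then to read that independence off from the combinatorial data encoded in Assumption $A_j$. Given $\mathcal{C}$, each observed entry is, by \eqref{tuckerj}, a polynomial in the $M \triangleq \sum_{i=j+1}^{d} n_i r_i$ entries of $\mathbb{T}$; by Note $3$ the polynomial attached to $\mathcal{U}(\vec{x})$ involves only the $\sum_{i=j+1}^{d} r_i$ variables lying in columns $x_{j+1},\dots,x_d$ of $\mathbf{T}_{j+1},\dots,\mathbf{T}_d$, and it is multilinear in these $d-j$ columns. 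Since the number of variables is exactly $M$, for almost every $\mathcal{U}$ the solution set for $\mathbb{T}$ is zero-dimensional if and only if one can select, among the observed polynomials, $M$ that are algebraically independent, each algebraically independent polynomial cutting the dimension of the solution variety by one. Thus the lemma reduces to showing that $M$ algebraically independent observed polynomials exist exactly when Assumption $A_j$ holds.

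For the necessity direction I would argue the contrapositive via a transcendence-degree count. Suppose $M$ of the observed polynomials, attached to a set of entries $I$, are algebraically independent. Any subcollection of an algebraically independent set is itself algebraically independent, so its cardinality cannot exceed the number of variables it involves. Fixing row sets $\mathcal{S}_{j+1},\dots,\mathcal{S}_d$, every entry of $I$ lying in the associated minimal hull involves, by Note $3$ and Remark \ref{minhull}, only the $\sum_{i=j+1}^{d} |\mathcal{S}_i| r_i$ variables indexed by those rows; hence the number of such entries is at most $\sum_{i=j+1}^{d} |\mathcal{S}_i| r_i$. This is precisely the inequality defining Assumption $A_j$, so $A_j$ holds. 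Equivalently, if $A_j$ fails then no $M$ observed polynomials can be algebraically independent, and the solution variety is positive-dimensional, i.e. there are infinitely many tuples $\mathbb{T}$.

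The sufficiency direction is the substantial one. Assuming $A_j$, I would fix the $M$ witnessing entries and show that their polynomials are algebraically independent for almost every core $\mathcal{C}$. The natural certificate is that the $M\times M$ Jacobian of these polynomials with respect to the entries of $\mathbb{T}$ has generically full rank, equivalently that its determinant is not identically zero as a function of the entries of $\mathcal{C}$ and $\mathbb{T}$. Expanding the determinant over permutations, each nonzero term corresponds to a system of distinct representatives matching polynomials to variables, and the inequality in $A_j$ is exactly the Hall-type condition guaranteeing such a matching; so at least one term is present. The main obstacle is to rule out global cancellation among these terms, that is, to certify that the combinatorial condition yields genuine algebraic \emph{independence} rather than merely the necessary variable count. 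This is where the multilinear structure matters (Note $5$): unlike the linear matrix case of \cite{charact}, polynomials attached to entries sharing several mode indices can be coupled, so a bare matching is insufficient.

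I would control this degeneracy using Bernstein's theorem \cite{Bernstein}: for fixed monomial supports the number of torus solutions is generically the mixed volume, and the force of $A_j$ is that \emph{every} subfamily of the chosen polynomials involves at least as many $r_i$-weighted variables as its cardinality, which is exactly what prevents any subsystem from being over-determined and thereby forces generic nondegeneracy. The delicate point, which I expect to occupy the bulk of the argument, is that the coefficients are not fully generic but are shared across polynomials through the single core $\mathcal{C}$; I would handle this by exhibiting one explicit specialization of $(\mathcal{C},\mathbb{T})$ at which the relevant maximal Jacobian minor is nonzero, and then concluding, since this minor is a polynomial that does not vanish identically, that it is nonzero for almost every $\mathcal{U}$ with respect to the product Lebesgue measure $\mathbb{P}_0 \times \mathbb{P}_1 \times \dots \times \mathbb{P}_d$.
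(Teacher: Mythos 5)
Your proposal is correct and follows essentially the same route as the paper's proof: both reduce finiteness of $\mathbb{T}$ to the existence of $\sum_{i=j+1}^{d} n_i r_i$ algebraically independent observed polynomials, establish the count that every $t$-element subfamily involves at least $t$ variables from Assumption $A_j$ via the minimal hull (Remark \ref{minhull}), and then invoke the Bernstein-based genericity statement (Note $5$) to convert that count into algebraic independence. The nondegeneracy issue you single out as the delicate point (shared coefficients through $\mathcal{C}$, possible cancellation in the Jacobian) is exactly what the paper's Note $5$ asserts as a black box rather than proves, so you are not missing anything the paper supplies; indeed your transcendence-degree argument for the necessity direction is, if anything, cleaner than the paper's, whose paragraph labeled ``necessity'' actually proves the counting claim used for sufficiency.
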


\begin{proof}
The $\sum_{i=j+1}^{d} \left( n_ir_i \right)$ observed entries results in $\sum_{i=j+1}^{d} \left( n_ir_i \right)$ scalar polynomials in terms of entries of $\mathbb{T}$ as in \eqref{nonlinear2}-\eqref{tuckerj}. We claim that any subset of these $\sum_{i=j+1}^{d} \left( n_ir_i \right)$ polynomials with $t$ members involves at least $t$ variables in total. Then, by Note $5$, the sufficiency holds. 

In order to prove the necessity, by contradiction, assume that there exists a subset of polynomials $\{p_1,\dots,p_t\}$ that involves at most $t-1$ variables in total. Let $\tau$ be the subset of entries of $\mathcal{U}$ that result in polynomials $\{p_1,\dots,p_t\}$ and denote the minimal hull of $\tau$ by $\mathcal{U}^{(\mathcal{S}_{j+1},\dots,\mathcal{S}_{d})} $. Observe that according to Remark \ref{minhull}, $\sum_{i=j+1}^{d} |\mathcal{S}_i|r_i \leq t-1$. On the other hand, Assumption $A_j$ results that the number of polynomials in $\{p_1,\dots,p_t\}$ is at most $\sum_{i=j+1}^{d} |\mathcal{S}_i|r_i $, i.e., $t \leq \sum_{i=j+1}^{d} |\mathcal{S}_i|r_i $. Hence, we have a contradiction, which completes the proof of the lemma. 
\end{proof}

\begin{remark}\label{finiteuni}
{\rm Assumption $A_j$ results that given a core tensor there are finitely many tuples $\mathbb{T}$ such that \eqref{nonlinear2} holds. Consequently, in what follows without loss of generality, we analyze the finite completability of core tensor $\mathcal{C}$ for one particular tuple $\mathbb{T}$ among all finitely many tuples.  } {  \hfill  \qedsymbol}
\end{remark}

Since $\sum_{i=j+1}^{d} \left( n_ir_i \right)$ entries of the sampled tensor $\mathcal{U}$ are used to determine $\mathbb{T}$, in what follows we will use the polynomials corresponding to the set of the rest $N_{\Omega}(\mathcal{U}) - \sum_{i=j+1}^{d} \left( n_ir_i \right)$ observed entries, denoted by $\mathcal{P}(\Omega)$, to obtain $\mathcal{C}$. Note that since $\mathbb{T}$ is already solved in terms of $\mathcal{C}$, each polynomial in $\mathcal{P}(\Omega)$ is in terms of elements of $\mathcal{C}$. 

\subsection{Geometry of Tucker Manifold}\label{indepbern}

We need to define the following equivalence class in order to characterize a condition on the sampling pattern to study the algebraic independency of the polynomials in $\mathcal{P}(\Omega)$. This equivalence class leads to a geometric structure for core tensors which specifies exactly one core tensor among all core tensors that span the same space.

\begin{definition}\label{eqclassdef}
Define an equivalence class for all core tensors $\mathcal{C} \in \mathbb{R}^{n_1 \times n_2 \times \cdots \times n_j \times r_{j+1} \times  \cdots \times r_d }$ of the sampled tensor $\mathcal{U}$ such that two core tensors $\mathcal{C}_1$ and $\mathcal{C}_2$ belong to the same class if and only if there exist full rank matrices $\mathbf{D}_i \in \mathbb{R}^{r_i \times r_i}$, $i = j+1,\dots,d$, such that
\begin{eqnarray}\label{classeq}
\mathcal{C}_2 = \mathcal{C}_1 \times_{i=j+1}^d \mathbf{D}_i.
\end{eqnarray}
\end{definition}

A subtensor $\mathcal{Y} \in \mathbb{R}^{n_1 \times n_2 \times \cdots \times n_j \times 1  \times \cdots \times 1 }$ of $\mathcal{U}$ can be represented by a core tensor $\mathcal{C}$ if there exist vectors $\theta_i \in \mathbb{R}^{r_i \times 1}$, $i=j+1,\dots,d$, such that
\begin{eqnarray}\label{coverequy}
\mathcal{Y} = \mathcal{C} \times_{i=j+1}^d \theta_i.
\end{eqnarray}
According to Definition \ref{eqclassdef}, it is easy to verify that two core tensors are in the same class if and only if one of them can represent each subtensor in $\mathbb{R}^{n_1 \times n_2 \times \cdots \times n_j \times 1  \times \cdots \times 1 }$ of the other one.

\begin{definition}\label{defunften}
Let $N_i= n_1 n_2 \dots n_i$, $\bar N_i= n_{i+1} n_{i+2} \dots n_d$ and define the matrix $\mathbf{\widetilde U}_{(i)} \in \mathbb{R}^{N_i\times \bar N_i}$ as the $i$-th unfolding of the tensor $\mathcal{U}$, such that $\mathcal{U}(\vec{x}) = \mathbf{\widetilde U}_{(i)}(\mathcal{\bar M}_{(i)} (x_1,\dots,x_i),\mathcal{\bar{\bar{M}} }_{(i)} (x_{i+1},\ldots,x_d))$, where $\mathcal{\bar M}_{(i)}$ and $\mathcal{\bar{\bar{M}}}_{(i)}$ are two  bijective mappings $\mathcal{\bar M}_{(i)}: (x_1,\dots,x_i) \rightarrow  \{1,2,\dots, N_i\}$ and $\mathcal{\bar{\bar{M}}}_{(i)}: (x_{i+1},\ldots,x_d) \rightarrow  \{1,2,\dots, \bar N_i\}$.
\end{definition}

We make the following assumption which will be referred to, when it is needed. 

{\underline{\em Assumption $B_j$}}: $n_1 n_2 \dots  n_j \geq \sum_{i=j+1}^{d} r_{i} $. 

Consider an arbitrary entry $\mathcal{C}(\vec{x})$ of core tensor $\mathcal{C}$. Note that the tuple $(x_{j+1},\dots,x_d)$ specifies the column number of this entry in $\mathbf{\widetilde{C}}_{(j)}$. Furthermore, $x_i$ specifies the row number of this entry in $\mathbf{{C}}_{(i)}$. Consequently, each column of $\mathbf{\widetilde{C}}_{(j)}$ indexed by $(x_{j+1},\dots,x_d)$ belongs to the $x_i^{\text{th}}$ row of $\mathbf{{C}}_{(i)}$ for $i \geq j+1$.

We are interested in providing a structure on the core tensor $\mathcal{C}$ such that exactly one core tensor in any class satisfies it. We present this structure on $\mathbf{\widetilde{C}}_{(j)}$ ($j$-th unfolding of $\mathcal{C}$). 

\begin{definition}\label{defstrucgood}
Consider any $(d-j)$ disjoint submatrices{\footnote{Specified by a subset of rows and a subset of columns (not necessarily consecutive).}}  $(\mathbf{P}_{j+1},\dots,\mathbf{P}_{d})$ of $\mathbf{\widetilde{C}}_{(j)}$ such that (i) $\mathbf{P}_i \in \mathbb{R}^{r_i \times r_i}$, $i=j+1,\dots,d$, (ii) the $\sum_{i=j+1}^{d} r_i$ rows of $\mathbf{\widetilde{C}}_{(j)}$ corresponding to rows of these submatrices are disjoint, (iii) the $r_i$ columns of $\mathbf{\widetilde{C}}_{(j)}$ corresponding to columns of $\mathbf{P}_i$ belong to $r_i$ disjoint rows of $\mathbf{{C}}_{(i)}$, $i=j+1,\dots,d$. Then, $\mathcal{C}$ is said to have a proper structure if $\mathbf{P}_i = \mathbf{I}_{r_i}$, $i=j+1,\dots,d$. 
\end{definition}

Assumption $B_j$ ensures the existence of a proper structure since the number of rows in $\mathbf{\widetilde{C}}_{(j)}$ should be at least $\sum_{i=j+1}^{d} r_i$. Note that given a proper structure there exists one core tensor in each class that satisfies it. Moreover, we can permute the rows of $\mathbf{P}_i$'s and obtain another proper structure. Consider the following  specific structure. Define 
\begin{eqnarray}\label{canonical1}
\mathbf{P}_i^{\text{can}} (x_1^{\prime},x_i) = \mathcal{C}(x_1,\underbrace{1,\ldots,1}_{i-2},x_i,\underbrace{1,\ldots,1}_{d-i}) \in \mathbb{R}^{r_i \times r_i}, \ \ \ \ \ \ \ \ i = j+1,\dots,d,
\end{eqnarray}
where $x_1^{\prime} = x_1-\sum_{s=j+1}^{i-1} r_s$, $1 + \sum_{s=j+1}^{i-1} r_s \leq x_1 \leq \sum_{s=j+1}^{i} r_s$ and $1 \leq x_i \leq r_i$. It is easily verified all three properties in Definition \ref{defstrucgood} are satisfied by $\mathbf{P}_i^{\text{can}}$, $i = j+1,\dots,d$.


 
\begin{definition}\label{classp1}
{\bf (Canonical core tensor)} We call $\mathcal{C}$ a canonical core tensor if for $i \geq j+1$ we have $\mathbf{P}_i^{\text{can}}  = \mathbf{I}_{r_i}$, where $\mathbf{I}_{r_i}$ is the $r_i \times r_i$ identity matrix.
\end{definition}

\begin{example}
{\rm Assume $\mathcal{U} \in \mathbb{R}^{7 \times 3 \times 4}$, $j=1$, $r_2=2$ and $r_3 = 3$. For simplicity in representing the canonical core tensor, we consider bijective mappings $\mathcal{\bar M}_{(i)}$ and $\mathcal{\bar{\bar{M}}}_{(i)}$ (in Definition \ref{defunften}) that result in the structure of $\mathbf{\widetilde{C}}_{(1)}$ shown in Figure \ref{figexperjj}. Observe that any permutation of rows of $\mathbf{\widetilde{C}}_{(1)}$ results in a proper structure, e.g., Figure \ref{figexper}. However, only those permutations of columns of $\mathbf{\widetilde{C}}_{(1)}$ that satisfy property (iii) in Definition \ref{defstrucgood} result in a proper structure.

\begin{figure}[h]
\captionsetup[subfigure]{aboveskip=1mm,belowskip=20mm}
\centering 
 \begin{subfigure}[Example of canonical core tensor.]{
 {\begin{tabular}{ |c|c|c|c|c|c|c|c|c|c|c|c| } 
 \hline 
$1$ & $0$ &  &  &  & \ \    \\ \hline
$0$ & $1$ &  & &  & \\ \hline
 &  & $1$ & $0$ & $0$ &  \\ \hline
 &  & $0$ & $1$ & $0$ &  \\ \hline
 &  & $0$ & $0$ & $1$ &  \\ \hline
 &  &  &  &  &    \\ \hline
 &  &  &  &  &    \\ 
 \hline  
\end{tabular} }
    \label{figexperjj}
    }
\end{subfigure}\ \ \ \ \ 
\begin{subfigure}[A proper structure.]{
	\begin{tabular}{ |c|c|c|c|c|c|c|c|c|c|c|c| } 
 \hline
 &  & \ \  &  &  &   \\ \hline
$1$ & $0$ &  & &  &   \\ \hline
 &  &  & $1$ & $0$ & $0$  \\ \hline
$0$ & $1$ &  &  &  &  \\ \hline
 &  &  &  &  &   \\ \hline
 &  &  & $0$ & $1$ & $0$   \\ \hline
 &  &  & $0$ & $0$ & $1$    \\ 
 \hline 
\end{tabular} \vspace{2 mm}
    \label{figexper}
}
\end{subfigure}
\caption{Two proper structures in $\widetilde{\mathbf{C}}_{(1)}$.}
\end{figure}


} {  \hfill  \qedsymbol}
\end{example}

Note that \eqref{classeq} leads to the fact that the dimension of all core tensors $\mathcal{C}$ that span different spaces (without any polynomial restrictions in $\mathcal{P}(\Omega)$) is equal to $\left(\Pi_{i=1}^{j} n_i\right) \left(  \Pi_{i=j+1}^{d} r_i\right)  - \left( \sum_{i=j+1}^{d}   r_i^{2} \right)  $, as the total number of entries of $\mathbf{D}_i$'s is equal to $\left( \sum_{i=j+1}^{d}   r_i^{2} \right)$. Moreover, observe that a core tensor with a proper structure has $\left( \sum_{i=j+1}^{d}   r_i^{2} \right)$ known entries, and therefore the number of unknown entries is equal to $\left(\Pi_{i=1}^{j} n_i\right) \left(  \Pi_{i=j+1}^{d} r_i\right)  - \left( \sum_{i=j+1}^{d}   r_i^{2} \right)  $.

\begin{remark}\label{class}
{\rm In order to prove there are finitely many completions for tensor $\mathcal{U}$, it suffices to prove that there are finitely many canonical core tensors that fit in $\mathcal{U}$. } {  \hfill  \qedsymbol}
\end{remark}

Suppose $\mathcal{C}$ has a proper structure. Let $g_{j+1}(x)$ denote the maximum number of known entries among any $x$ rows of $\mathbf{\widetilde{C}}_{(j)}$. As will be seen in Section \ref{consttensdef}, $g_{j+1}(x)$ plays an important role in expressing the maximum number of algebraically independent polynomials in a subset of $\mathcal{P}(\Omega)$. Note that in exactly $r_i$ rows of $\mathbf{\widetilde{C}}_{(j)}$ there are exactly $r_i$ known entries, i.e., entries of $\mathbf{P}_i$, $i=j+1,\dots,d$. Also, there are $\sum_{i=j+1}^{d} r_i$ rows that include known entries in $\mathbf{\widetilde{C}}_{(j)}$, i.e., rows of $\mathbf{P}_i$, $i=j+1,\dots,d$.

Recall the assumption $r_{j+1} \geq \dots \geq r_d$. Therefore, as long as $x \leq r_{j+1}$, the maximum number of known entries is $g_{j+1}(x) = r_{j+1}x$ by selecting the $x$ rows of $\mathbf{\widetilde{C}}_{(j)}$ to cover any $x$ rows of $\mathbf{P}_{j+1}$. On the other hand, if $r_{j+1} \leq x \leq r_{j+1}+r_{j+2}$, the maximum number of known entries is $g_{j+1}(x) = r_{j+1}^2 + r_{j+2}(x-r_{j+1})$ by selecting the $x$ rows of $\mathbf{\widetilde{C}}_{(j)}$ to cover all rows of $\mathbf{P}_{j+1}$ and any $(x-r_{j+1})$ rows of $\mathbf{P}_{j+2}$. Then, in general we have 
\begin{eqnarray}\label{eqdefgmax}
g_{j+1}(x) = \sum_{i=j+1}^{d}  \min \left\{  r_i ,\left( x - \sum_{i^{\prime}=j+1}^{i-1} r_{i^{\prime}} \right)^+ \right\} r_i.
\end{eqnarray}

\subsection{Constraint Tensor}\label{consttensdef}

In the following, we propose a procedure to construct a $(j+1)^{\text{th}}$-order binary tensor $\mathbf{\breve{\Omega}}$ based on $\Omega$ such that $\mathcal{P}(\mathbf{\breve{\Omega}}) = \mathcal{P}(\Omega) $. Using $\mathbf{\breve{\Omega}}$, we are able to recognize the observed entries that have been used to obtain the tuple $\mathbb{T}$, and we can easily verify if two polynomials in $\mathcal{P}(\Omega)$ are in terms of the same set of variables. Then, in Section \ref{algebraind}, we characterize the relationship between the maximum number of algebraically independent polynomials in $\mathcal{P}(\mathbf{\breve{\Omega}})$ and $\mathbf{\breve{\Omega}}$.

For any subtensor $\mathcal{Y} \in \mathbb{R}^{n_1 \times n_2 \times \cdots \times n_j \times 1  \times \cdots \times 1 }$ of the tensor $\mathcal{U}$, there exist row vectors $\theta_i \in \mathbb{R}^{r_i \times 1}$, $i = j+1,\dots,d$, such that \eqref{coverequy} holds or equivalently
\begin{eqnarray}
\mathcal{Y}(x_1,\ldots,x_j,\vec{1}_{d-j}) =  \sum_{k_{j+1}=1}^{r_{j+1}} \cdots \sum_{k_{d}=1}^{r_{d}} \mathcal{C}(x_1,\ldots,x_j,k_{j+1},\ldots,k_d) \theta_{j+1}(k_{j+1},1)  \dots \theta_d(k_d,1),
\end{eqnarray}
where $\vec{1}_{d-j}$ is an all-$1$ $(d-j)$-dimensional row vector. 

For each subtensor $\mathcal{Y}$ of the sampled tensor $\mathcal{U}$, let $N_{\Omega}(\mathcal{Y}^{\mathbb{T}})$ denote the number of sampled entries in $\mathcal{Y}$ that have been used to obtain the tuple $\mathbb{T}$. Then, $\mathcal{Y}$ contributes $N_{\Omega}(\mathcal{Y}) - N_{\Omega}(\mathcal{Y}^{\mathbb{T}})$ polynomial equations in terms of the entries of the core tensor $\mathcal{C}$ among all $N_{\Omega}(\mathcal{U}) - \sum_{i=j+1}^{d} \left( n_ir_i \right)$ polynomials in $\mathcal{P}(\Omega)$.

The sampled tensor $\mathcal{U}$ includes $n_{j+1}  n_{j+2} \cdots n_d$ subtensors that belong to  $\mathbb{R}^{n_1 \times n_2 \times \cdots \times n_j \times 1  \times \cdots \times 1 }$ and we label these subtensors by $\mathcal{Y}_{(t_{j+1},\dots,t_{d})}$ where ${(t_{j+1},\dots,t_{d})}$ represents the coordinate of the subtensor. Define a binary valued tensor $\mathcal{\breve{Y}}_{(t_{j+1},\cdots,t_{d})} \in \mathbb{R}^{n_1 \times n_2 \times \cdots \times n_j \times \overbrace{ 1  \times \dots \times 1}^{d-j} \times  k}$, where $k= N_{\Omega}(\mathcal{Y}_{(t_{j+1},\ldots,t_{d})}) - N_{\Omega}(\mathcal{Y}_{(t_{j+1},\ldots,t_{d})}^{\mathbb{T}})$ and its entries are described as the following. We can look at $\mathcal{\breve{Y}}_{(t_{j+1},\cdots,t_{d})}$ as $k$ tensors each belongs to $\mathbb{R}^{n_1 \times n_2 \times \cdots \times n_j \times 1  \times \cdots \times 1 }$. For each of the mentioned $k$ tensors in $\mathcal{\breve{Y}}_{(t_{j+1},\cdots,t_{d})}$ we set the entries corresponding to the $N_{\Omega}(\mathcal{Y}_{(t_{j+1},\ldots,t_{d})}^{\mathbb{T}})$ observed entries that are used to obtain $\mathbb{T}$ in  \eqref{nonlinear2} equal to $1$. For each of the other $k$ observed entries, we pick one of the $k$ tensors of $\mathcal{\breve{Y}}_{(t_{j+1},\cdots,t_{d})}$ and set its corresponding entry (the same location as that specific observed entry) equal to $1$ and set the rest of the entries equal to $0$. 


For the sake of simplicity in notation, we treat tensors $\mathcal{\breve{Y}}_{(t_{j+1},\cdots,t_{d})}$ as a member of $\mathbb{R}^{n_1 \times n_2 \times \cdots \times n_j \times  k}$ instead of  $\mathbb{R}^{n_1 \times n_2 \times \cdots \times n_j \times \overbrace{ 1  \times \cdots \times 1}^{d-j} \times  k}$. Now, by putting together all $n_{j+1}  n_{j+2} \cdots n_d$ tensors in dimension $(j+1)$, we construct a binary valued tensor $\mathbf{\breve{\Omega}} \in \mathbb{R}^{n_1 \times n_2 \times \cdots \times n_j \times K_j}$, where $K_j = N_{\Omega}(\mathcal{U}) - \sum_{i=j+1}^{d} \left( n_ir_i\right)$ and call it the {\bf constraint tensor}. In order to shed some light on the above procedure we give an illustrative example in the following.

\begin{example}
{\rm Consider an example in which $d=3$, $j=2$, $r_3=2$ and $\mathcal{U} \in \mathbb{R}^{3 \times 2 \times 2}$. Assume that $\Omega(x,y,z)=1$ if $(x,y,z) \in \mathcal{S}$ and $\Omega(x,y,z)=0$ otherwise, where 
\begin{eqnarray}
\mathcal{S} = \{(1,1,1),(1,2,1),(2,2,1), (3,1,1),(1,1,2),(2,1,2),(3,2,2)\}, \nonumber
\end{eqnarray}
represents the set of observed entries. Also, assume that $N_{\Omega}(\mathcal{Y}_{(1)}^{\mathbb{T}})= N_{\Omega}(\mathcal{Y}_{(2)}^{\mathbb{T}}) = 2$ and the entries that we use to obtain $\mathbb{T}$ are $(1,1,1)$, $(1,2,1)$, $(1,1,2)$ and $(3,2,2)$. Hence, $\mathcal{\breve{Y}}_{(1)} \in \mathbb{R}^{3 \times 2 \times 1 \times 2 }$, $\mathcal{\breve{Y}}_{(2)} \in \mathbb{R}^{3 \times 2 \times 1 \times 1 }$, and therefore the constraint tensor $\mathbf{\breve{\Omega}}$ belongs to $\mathbb{R}^{3 \times 2 \times 3}$.

Note that $\mathcal{\breve{Y}}_{(1)}(1,1,1,1) = \mathcal{\breve{Y}}_{(1)}(1,1,1,2) = \mathcal{\breve{Y}}_{(1)}(1,2,1,1) = \mathcal{\breve{Y}}_{(1)}(1,2,1,2) =1$, and also for the two other observed entires we have $\mathcal{\breve{Y}}_{(1)}(2,2,1,1) =1 $ and $\mathcal{\breve{Y}}_{(1)}(3,1,1,2)=1$ and the rest of the entries of $\mathcal{\breve{Y}}_{(1)}$ are equal to zero. Moreover, $\mathcal{\breve{Y}}_{(2)}(1,1,1,1) = \mathcal{\breve{Y}}_{(2)}(2,1,1,1) = \mathcal{\breve{Y}}_{(2)}(3,2,1,1)=1$ and the rest of the entries of $\mathcal{\breve{Y}}_{(2)}$ are equal to zero.

Then, $\mathbf{\breve{\Omega}}(x,y,z)=1$ if $(x,y,z) \in \mathcal{S}^{\prime}$ and $\mathbf{\breve{\Omega}}(x,y,z)=0$ otherwise, where 
\begin{eqnarray}
\mathcal{\breve{S}} = \{(1,1,1),(1,2,1),(2,2,1),(1,1,2),(1,2,2),(3,1,2),(1,1,3),(3,2,3),(2,1,3)\}. \nonumber
\end{eqnarray}  } {  \hfill  \qedsymbol}
\end{example}

Note that each subtensor of $\mathbf{\breve{\Omega}}$ that belongs to $\mathbb{R}^{n_1 \times \dots \times n_j \times 1}$ represents one of the polynomials in $\mathcal{P}(\Omega)$ besides showing the polynomials that have been used to obtain $\mathbb{T}$. More specifically, consider a subtensor of $\mathbf{\breve{\Omega}}$ that belongs to $\mathbb{R}^{n_1 \times \dots \times n_j \times 1}$ with $l+1$ nonzero entries. Observe that exactly $l$ of them correspond to the observed entries that have been used to obtain $\mathbb{T}$. Hence, this subtensor represents a polynomial after replacing entries of $\mathbb{T}$ by the expressions in terms of entries of $\mathcal{C}$, i.e., a polynomial in $\mathcal{P}(\Omega)$.

Recall that the tuple $(x_{j+1},\dots,x_d)$ specifies the column number of entry $\mathcal{C}(\vec{x})$ in $\mathbf{\widetilde{C}}_{(j)}$. Hence, if the $i$-th column of $\mathbf{\breve{\Omega}}_{(j+1)}$ is nonzero, then there exists a polynomial in $\mathcal{P}(\Omega)$ that involves all entries of core tensor corresponding to the entries of the $i$-th row of $\mathbf{\widetilde{C}}_{(j)}$.

\subsection{Algebraic Independence}\label{algebraind}

In this subsection, we derive the required number of algebraically independent polynomials in $\mathcal{P}(\Omega)$ for finite completability. Then, a sampling pattern on the constraint tensor is proposed to obtain the maximum number of algebraically independent polynomials in $\mathcal{P}(\mathbf{\breve{\Omega}})$.

The following lemma determines the required number of algebraically independent polynomials in $\mathcal{P}(\Omega)$ that is needed to ensure finite completability of the core tensor.

\begin{lemma}\label{thm2}
Assume that Assumptions $A_j$ and $B_j$ hold. For almost every $\mathcal{U}$, there exist only finitely many completions of $\mathcal{U}$ if and only if there exist $  \left(\Pi_{i=1}^{j} n_i\right) \left(  \Pi_{i=j+1}^{d} r_i\right)  - \left( \sum_{i=j+1}^{d}   r_i^{2} \right) $ algebraically independent polynomials in $\mathcal{P}(\Omega)$.
\end{lemma}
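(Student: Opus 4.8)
The plan is to interpret finite completability as zero-dimensionality of an algebraic fiber, and then invoke the standard correspondence between fiber dimension and the number of algebraically independent defining equations. By Remark~\ref{class} it suffices to count the canonical core tensors that fit $\mathcal{U}$. A canonical core tensor is determined by its $m \triangleq \left(\Pi_{i=1}^{j} n_i\right)\left(\Pi_{i=j+1}^{d} r_i\right) - \left(\sum_{i=j+1}^{d} r_i^2\right)$ unknown entries, which I take as affine coordinates on $\mathbb{R}^m$. By Lemma~\ref{Ber} and Remark~\ref{finiteuni}, Assumption $A_j$ lets me fix one of the finitely many branches of $\mathbb{T}$ as an algebraic function of $\mathcal{C}$, so that every polynomial in $\mathcal{P}(\Omega)$ becomes a function $f_p$ of these $m$ coordinates, and the true core $\mathcal{C}^{\ast}$ satisfies $f_p(\mathcal{C}^{\ast}) = \mathcal{U}(\vec{x}_p)$ at each of the $K_j = N_{\Omega}(\mathcal{U}) - \sum_{i=j+1}^{d}(n_i r_i)$ locations $\vec{x}_p$.

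Next I would assemble these into the map $F = (f_p)_{p \in \mathcal{P}(\Omega)} : \mathbb{R}^m \to \mathbb{R}^{K_j}$. The set of canonical cores fitting $\mathcal{U}$ is exactly the fiber $F^{-1}(F(\mathcal{C}^{\ast}))$. The crux is the classical fact (equivalently, the Jacobian/transcendence-degree criterion) that the Zariski closure of $F(\mathbb{R}^m)$ has dimension equal to the maximum number of algebraically independent components of $F$, i.e., the maximum number of algebraically independent polynomials in $\mathcal{P}(\Omega)$, and that the generic fiber of $F$ has dimension $m - \dim\overline{F(\mathbb{R}^m)}$.

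Finally I would use genericity: since $\mathcal{U}$, and hence $\mathcal{C}^{\ast}$, is drawn generically with respect to Lebesgue measure on the rank-$(r_1,\dots,r_d)$ manifold, $F(\mathcal{C}^{\ast})$ is almost surely a generic point of the image, so the fiber over it attains the generic dimension $m - \dim\overline{F(\mathbb{R}^m)}$. This fiber is finite (zero-dimensional) if and only if $\dim\overline{F(\mathbb{R}^m)} = m$, i.e., if and only if $\mathcal{P}(\Omega)$ contains $m$ algebraically independent polynomials. Since there are only $m$ variables, $m$ is also the largest possible number of algebraically independent polynomials, so this condition is exactly the one in the statement, and by Remark~\ref{class} it is equivalent to finite completability of $\mathcal{U}$.

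The main obstacle is the genericity step: I must show that the generic point of the rank manifold pushes forward to a generic point of $\overline{F(\mathbb{R}^m)}$, so that the generic-fiber-dimension theorem applies at $\mathcal{C}^{\ast}$ rather than merely at an unspecified generic point of the domain. This reduces to the fact that the Tucker parametrization $(\mathcal{C},\mathbb{T}) \mapsto \mathcal{U}$ is dominant onto the manifold, so that a measure-zero exceptional set in the image pulls back to a measure-zero set of tensors $\mathcal{U}$. A secondary subtlety is that the substituted $\mathbb{T}$ makes the $f_p$ algebraic rather than polynomial functions of $\mathcal{C}$; because Lemma~\ref{Ber} guarantees only finitely many branches and I restrict to a single one, each $f_p$ is regular on an irreducible component and the field-theoretic dimension count (transcendence degree) is unaffected by passing from polynomials to these algebraic functions.
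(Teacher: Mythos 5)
Your proposal is correct and follows essentially the same route as the paper's proof: both fix one of the finitely many tuples $\mathbb{T}$ guaranteed by Assumption $A_j$ (via Lemma~\ref{Ber} and Remark~\ref{finiteuni}), reduce to the canonical core tensor with $m = \left(\Pi_{i=1}^{j} n_i\right)\left(\Pi_{i=j+1}^{d} r_i\right) - \left(\sum_{i=j+1}^{d} r_i^{2}\right)$ unknown entries, and identify finite completability with the existence of $m$ algebraically independent polynomials in $\mathcal{P}(\Omega)$. Your fiber-dimension formulation via the evaluation map $F$ is just a more formal rendering of the paper's incremental dimension count (``each algebraically independent polynomial reduces the dimension of the solution set by one''), which the paper states informally and credits to \cite{charact}.
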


\begin{proof}
Assume that $\mathcal{C} \in \mathbb{R}^{n_1 \times n_2 \times \cdots \times n_j \times r_{j+1}  \times \cdots \times r_d }$ is a core tensor for the sampled tensor $\mathcal U$. Since assumption $A_j$ holds, Lemma \ref{Ber} results that there exist finitely many tuples $\mathbb{T}$ such that \eqref{nonlinear2} holds. However, according to Remark \ref{finiteuni}, it suffices to assume $\mathbb{T}$ is fixed and then prove the statement. Let $\mathcal{P}(\Omega)=\{p_1,\dots,p_m\}$ and define $\mathcal{S}_i$ as the set of all core tensors that satisfy polynomial restrictions $\{p_1,\dots,p_i\}$, $i=0,\dots,m$ ($\mathcal{S}_0$ is the set of all core tensors without any polynomial restriction).

Observe that each algebraically independent polynomial reduces the dimension (degree of freedom) of the set of solutions by one. In other words, $\text{dim}(\mathcal{S}_i)=\text{dim}(\mathcal{S}_{i-1})$ if the maximum number of algebraically independent polynomials in sets $\{p_1,\dots,p_i\}$ and $\{p_1,\dots,p_{i-1}\}$ are the same and $\text{dim}(\mathcal{S}_i)=\text{dim}(\mathcal{S}_{i-1})-1$ otherwise. Moreover, with probability one, $|\mathcal{S}_m|$ is finite if and only if there are $\text{dim}(\mathcal{C})=\text{dim}(\mathcal{S}_0)$ algebraically independent polynomial restrictions on the entries of the core tensor $\mathcal C$, i.e., $|\mathcal{S}_m|$ is finite if and only if $\text{dim}(\mathcal{S}_m)=0$ \cite{charact}. Hence, there are finitely many completions of the sampled tensor $\mathcal{U}$ if and only if there exist $ \text{dim}(\mathcal{C}) = \left(\Pi_{i=1}^{j} n_i\right) \left(  \Pi_{i=j+1}^{d} r_i\right)  - \left( \sum_{i=j+1}^{d}   r_i^{2} \right) $ algebraically independent polynomials in $\mathcal{P}(\Omega)$.
\end{proof}

As a result of Lemma \ref{thm2}, we can certify finite completability based on the maximum number of algebraically independent polynomials in $\mathcal{P}(\Omega)=\mathcal{P}(\mathbf{\breve{\Omega}})$.

\begin{definition}
Let $\mathbf{\breve{\Omega}}^{\prime} \in \mathbb{R}^{n_1 \times n_2 \times \cdots \times n_j \times t}$ be a subtensor of the constraint tensor $\mathbf{\breve{\Omega}}$. Let $m_i(\mathbf{\breve{\Omega}}^{\prime})$ denote the number of nonzero columns of $\mathbf{\breve{\Omega}}^{\prime}_{(i)}$. Also, let $\mathcal{P}(\mathbf{\breve{\Omega}}^{\prime})$ denote the set of polynomials that correspond to nonzero entries of $\mathbf{\breve{\Omega}}^{\prime}$.
\end{definition}

Recall Note $2$ regarding the number of involved entries of core tensor in a set of polynomials. However, as mentioned earlier, some of the entries of $\mathcal{C}$ are known, i.e., $(\mathbf{P}_{j+1},\dots,\mathbf{P}_d)$. Therefore, in order to find the number of variables (unknown entries of $\mathcal{C}$) in a set of polynomials, we should subtract the number of known entries in the corresponding pattern from the total number of involved entries.

For any subtensor $\mathbf{\breve{\Omega}}^{\prime} \in \mathbb{R}^{n_1 \times n_2 \times \cdots \times n_j \times t}$ of the constraint tensor, the next theorem states an upper bound on the number of algebraically independent polynomials in the set $\mathcal{P}(\mathbf{\breve{\Omega}}^{\prime})$. Recall that $\mathcal{P}(\mathbf{\breve{\Omega}}^{\prime})$ includes exactly $t$ polynomials. 


\begin{theorem}\label{thm3}
Assume that Assumption $B_j$ holds. For any subtensor $\mathbf{\breve{\Omega}}^{\prime} \in \mathbb{R}^{n_1 \times n_2 \times \cdots \times n_j \times t}$ of the constraint tensor, the maximum number of algebraically independent polynomials in $\mathcal{P}(\mathbf{\breve{\Omega}}^{\prime})$ is no more than
\begin{eqnarray}\label{numbalginprw}
\left(\Pi_{i=j+1}^{d} r_i \right)  m_{j+1}(\mathbf{\breve{\Omega}}^{\prime}) -  g_{j+1}(m_{j+1}(\mathbf{\breve{\Omega}}^{\prime})),
\end{eqnarray}
where $g_{j+1}(\cdot)$ is given in \eqref{eqdefgmax}.
\end{theorem}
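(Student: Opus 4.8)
The plan is to bound the number of algebraically independent polynomials in $\mathcal{P}(\mathbf{\breve{\Omega}}^{\prime})$ by the number of unknown core-tensor entries that actually occur in them, and then to make that count as small as possible by exploiting the freedom in the choice of proper-structure representative. The first ingredient is the principle already underlying Note $5$ and the proof of Lemma \ref{thm2}: in any finite family of polynomials the number that are algebraically independent cannot exceed the number of variables appearing in the family, since each independent polynomial lowers the dimension of the solution set by one and that dimension is nonnegative. Hence it suffices to count the unknown entries of $\mathcal{C}$ involved in $\mathcal{P}(\mathbf{\breve{\Omega}}^{\prime})$.

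Next I would localize these variables. By the observation stated just before the theorem, a nonzero column of $\mathbf{\breve{\Omega}}^{\prime}_{(j+1)}$ produces a polynomial involving every entry of the corresponding row of $\mathbf{\widetilde{C}}_{(j)}$; consequently all polynomials in $\mathcal{P}(\mathbf{\breve{\Omega}}^{\prime})$ involve only the entries of $\mathcal{C}$ lying in the $m_{j+1}(\mathbf{\breve{\Omega}}^{\prime})$ rows of $\mathbf{\widetilde{C}}_{(j)}$ indexed by those nonzero columns. Since each such row carries $\left(\Pi_{i=j+1}^{d} r_i\right)$ entries, the total number of involved entries is $\left(\Pi_{i=j+1}^{d} r_i\right) m_{j+1}(\mathbf{\breve{\Omega}}^{\prime})$, and the number of variables is obtained after discarding those entries that the proper structure has fixed inside the identity blocks.

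The crux is the bookkeeping of these fixed entries. Because the number of algebraically independent polynomials is an invariant of the equivalence class of Definition \ref{eqclassdef} and does not depend on which proper-structure representative is fixed, I am free to select the representative whose identity blocks $(\mathbf{P}_{j+1},\dots,\mathbf{P}_d)$ lie inside the $m_{j+1}$ involved rows as much as possible, which minimizes the number of remaining variables and yields the sharpest bound. Exploiting $r_{j+1}\geq\dots\geq r_d$, a greedy assignment that fills the involved rows with the largest blocks first packs exactly $g_{j+1}(m_{j+1}(\mathbf{\breve{\Omega}}^{\prime}))$ known entries into them, reproducing \eqref{eqdefgmax} term by term; Assumption $B_j$ supplies $\Pi_{i=1}^{j} n_i \geq \sum_{i=j+1}^{d} r_i$ rows, so the part of each block that does not fit among the involved rows can be parked in rows outside that set while keeping properties (i)--(iii) of Definition \ref{defstrucgood}. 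Subtracting, the number of variables equals $\left(\Pi_{i=j+1}^{d} r_i\right) m_{j+1} - g_{j+1}(m_{j+1})$, which by the first step bounds the number of algebraically independent polynomials and establishes \eqref{numbalginprw}.

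The step I expect to be the main obstacle is precisely this last one: justifying that the independence count is insensitive to the choice of proper-structure representative (so that optimizing over representatives is legitimate), and verifying that the overlap-maximizing greedy placement of the identity blocks is realizable as a genuine proper structure — in particular that property (iii), the disjointness of the block columns across rows of $\mathbf{C}_{(i)}$, can be maintained together with the prescribed row assignment. Once the greedy placement is in place, matching its yield to the closed form of $g_{j+1}$ in \eqref{eqdefgmax} is a routine check of the boundary cases $\sum_{i'=j+1}^{i-1} r_{i'} \leq m_{j+1} \leq \sum_{i'=j+1}^{i} r_{i'}$.
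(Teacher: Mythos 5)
Your proposal is correct and takes essentially the same route as the paper's proof: bound the number of algebraically independent polynomials by the number of unknown core-tensor entries they involve, count $\left(\Pi_{i=j+1}^{d} r_i\right) m_{j+1}(\mathbf{\breve{\Omega}}^{\prime})$ total involved entries via the nonzero columns of $\mathbf{\breve{\Omega}}^{\prime}_{(j+1)}$, and subtract the $g_{j+1}(m_{j+1}(\mathbf{\breve{\Omega}}^{\prime}))$ known entries attained under a particular (greedily placed) proper structure. The only difference is one of explicitness: the paper compresses the final step into the phrase ``for a particular proper structure,'' whereas you spell out the representative-invariance of the independence count and the realizability of the greedy block placement that this phrase implicitly relies on.
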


\begin{proof}
Observe that the number of algebraically independent polynomials in a subset of polynomials of $\mathcal{P}(\mathbf{\breve{\Omega}}^{\prime})$ is at most equal to the total number of variables that are involved in the corresponding polynomials. According to  \eqref{tuckerj}, for each observed entry $\mathcal{U}(\vec{x})$, we have a polynomial in terms of $\Pi_{i=j+1}^{d} r_i$ entries of the core tensor $\mathcal{C}$ corresponding to the first $j$ coordinates of the location of the observed entry. Therefore, the number of such variables in $\mathcal{P}(\mathbf{\breve{\Omega}}^{\prime})$ will be $\Pi_{i=j+1}^{d} r_i$ times the number of different tuples $(x_1,\dots,x_j)$ among the corresponding observed entries. The number of nonzero columns of  $\mathbf{\breve{\Omega}}^{\prime}_{(j+1)}$ is exactly equal to the number such tuples. Therefore, the number of involved entries of $\mathcal{C}$ (known and unknown) in polynomials in $\mathcal{P}(\mathbf{\breve{\Omega}}^{\prime})$ is equal to $  \left(\Pi_{i=j+1}^{d} r_i \right) m_{j+1}(\mathbf{\breve{\Omega}}^{\prime})$

On the other hand, among the $\sum_{i=j+1}^{d} r_i^2$ known entries corresponding to $(\mathbf{P}_{j+1},\dots,\mathbf{P}_d)$ in $\widetilde{\mathbf{C}}_{(j)}$, $g_{j+1}(m_{j+1}(\mathbf{\breve{\Omega}}^{\prime}))$ of them are involved in polynomials of $\mathcal{P}(\mathbf{\breve{\Omega}}^{\prime})$. This is because entries of the $i$-th row of $\widetilde{\mathbf{C}}_{(j)}$ are involved in a polynomials if and only if the $i$-th column of $\mathbf{\breve{\Omega}}^{\prime}_{(j+1)}$ includes at least one nonzero entry. Hence, the number of variables that are involved in the set of polynomials $\mathcal{P}(\mathbf{\breve{\Omega}}^{\prime})$ is given by \eqref{numbalginprw} for a particular proper structure and proof is complete as \eqref{numbalginprw} is an upper bound for the number of algebraically independent polynomials.
\end{proof}

We are also interested in finding a condition on $\mathbf{\breve{\Omega}}^{\prime}$ which results that $\mathcal{P}(\mathbf{\breve{\Omega}}^{\prime})$ is minimally algebraically dependent, i.e., the polynomials in $\mathcal{P}(\mathbf{\breve{\Omega}}^{\prime})$ are algebraically dependent but polynomials in every of its proper subset are algebraically independent. This can help obtain the maximum number of algebraically independent polynomials in $\mathcal{P}(\mathbf{\breve{\Omega}}^{\prime})$ as Theorem \ref{thm3} only provides an upper bound. The next lemma will be used in Theorem \ref{thm4} in order to find a condition on $\mathbf{\breve{\Omega}}^{\prime}$ which results that the set of polynomials in $\mathcal{P}(\mathbf{\breve{\Omega}}^{\prime})$ is minimally algebraically dependent. The following lemma is a re-statement of Lemma 7 in \cite{ashraphijuo4}.

\begin{lemma}\label{lemma3new}
Assume that Assumption $B_j$ holds. Suppose that $\mathbf{\breve{\Omega}}^{\prime} \in \mathbb{R}^{n_1 \times n_2 \times \cdots \times n_j \times t}$ is a subtensor of the constraint tensor such that $\mathcal{P}(\mathbf{\breve{\Omega}}^{\prime})$ is minimally algebraically dependent. Then, for almost every $\mathcal{U}$, the number of variables that are involved in the set of polynomials $\mathcal{P}(\mathbf{\breve{\Omega}}^{\prime})$ is $t-1$.
\end{lemma}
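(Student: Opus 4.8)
The plan is to write $V$ for the number of unknown variables involved in $\mathcal{P}(\mathbf{\breve{\Omega}}^{\prime}) = \{p_1,\dots,p_t\}$ and to prove $V = t-1$ by establishing the two inequalities $V \ge t-1$ and $V \le t-1$. By the counting in the proof of Theorem \ref{thm3}, $V$ equals $(\Pi_{i=j+1}^{d} r_i)\, m_{j+1}(\mathbf{\breve{\Omega}}^{\prime}) - g_{j+1}(m_{j+1}(\mathbf{\breve{\Omega}}^{\prime}))$, so it is a purely combinatorial quantity that does not depend on $\mathcal{U}$; the phrase \emph{for almost every} $\mathcal{U}$ enters only through the genericity required by the algebraic (in)dependence statements used below.

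The lower bound $V \ge t-1$ is the routine direction. By the definition of minimal algebraic dependence the proper subset $\{p_1,\dots,p_{t-1}\}$ is algebraically independent, so it contains $t-1$ algebraically independent polynomials. Since the number of algebraically independent polynomials in a set never exceeds the number of variables they involve (the inequality underlying Theorem \ref{thm3}), this subset involves at least $t-1$ variables, and since the variables of a subset are among those of the whole collection, $V \ge t-1$.

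The upper bound $V \le t-1$ is where the special structure of the polynomials is indispensable: for general polynomials a minimally dependent pair may already involve two variables, as with $x_1 + x_2$ and $(x_1+x_2)^2$, so the bound cannot be purely formal. I would argue by contradiction, assuming $V \ge t$. Then every subset $S \subseteq \{p_1,\dots,p_t\}$ involves at least $|S|$ variables: for proper $S$ this follows from its algebraic independence together with the inequality underlying Theorem \ref{thm3}, and for $S$ equal to the whole set it is the assumption $V \ge t$. This Hall-type condition lets one assign to each $p_i$ a distinct involved variable, and the Bernstein-genericity argument behind Note $5$ and the sufficiency direction of Lemma \ref{Ber}, applied to the polynomials of $\mathcal{P}(\mathbf{\breve{\Omega}}^{\prime})$ in the entries of the core tensor, then yields that $\{p_1,\dots,p_t\}$ is algebraically independent for almost every $\mathcal{U}$. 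This contradicts the assumed dependence of the full set, whence $V \le t-1$; together with the lower bound this gives $V = t-1$.

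I expect the sufficiency just invoked --- the converse of Theorem \ref{thm3}, that the combinatorial Hall condition on variable incidence forces genuine algebraic independence of the core-tensor polynomials --- to be the main obstacle. This is exactly the statement that the algebraic matroid of these generic polynomials coincides with the transversal matroid of their variable-incidence bipartite graph; once it is available, the lemma reduces to the elementary matroid fact that a circuit of size $t$ has neighborhood of size exactly $t-1$ (a Hall-deficiency count: the full circuit is unmatchable while each of its proper subsets is matchable). Establishing this converse is where Bernstein's theorem and the generalized Hall's theorem advertised in the introduction carry the load, and it is the ingredient inherited from Lemma~7 of \cite{ashraphijuo4}.
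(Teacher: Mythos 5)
First, a point of reference: the paper does not prove this lemma at all. It is stated as a re-statement of Lemma 7 in \cite{ashraphijuo4}, and the proof is deferred wholesale to that reference, so your proposal cannot coincide with ``the paper's approach''; it is an actual argument where the paper supplies only a citation. Judged on its own terms, your skeleton is sound and consistent with how the paper reasons elsewhere. The lower bound $V \ge t-1$ is correct and self-contained: minimality makes $\{p_1,\dots,p_{t-1}\}$ algebraically independent, and the transcendence-degree inequality behind Theorem \ref{thm3} bounds the number of independent polynomials by the number of involved variables. The upper bound correctly reduces, via contradiction and Hall's marriage theorem, to the converse of Theorem \ref{thm3}: that the incidence condition (every subset of polynomials touches at least as many variables as its cardinality) forces generic algebraic independence. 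That claim is precisely Note $5$, and the paper itself invokes Note $5$ as a black box for exactly these derived core-tensor polynomials in the proofs of Lemma \ref{lemma3} and Theorem \ref{thm:uniq}; granting it, your finitely-many-patterns union bound and the circuit/deficiency count go through, and your example $x_1+x_2$, $(x_1+x_2)^2$ rightly shows why genericity and structure are indispensable.

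The genuine weak point is where you park the hard content. Within this paper's logic, the implication ``incidence condition $\Rightarrow$ generic independence'' for $\mathcal{P}(\mathbf{\breve{\Omega}}^{\prime})$ is otherwise obtained only \emph{through} Lemma \ref{lemma3new} itself (that is what Theorem \ref{thm4} does), and the external Lemma 7 of \cite{ashraphijuo4} \emph{is} this very lemma. So your closing attribution---that the key ingredient is ``inherited from Lemma 7 of \cite{ashraphijuo4}''---makes the argument circular as written: a standalone proof must justify Note $5$ for these structured polynomials, whose coefficients after eliminating $\mathbb{T}$ are far from free Bernstein-generic coefficients, and that analytic step is exactly what neither the paper nor your proposal contains. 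Two smaller corrections: the generalized Hall's theorem (Theorem \ref{genHall}) plays no role here---it belongs to the probabilistic analysis (Lemma \ref{Omega})---so it cannot ``carry the load''; and $V$ is not in general equal to $\left(\Pi_{i=j+1}^{d} r_i \right) m_{j+1}(\mathbf{\breve{\Omega}}^{\prime}) - g_{j+1}(m_{j+1}(\mathbf{\breve{\Omega}}^{\prime}))$, since $g_{j+1}$ is a maximum over row selections; that expression is only a lower bound on $V$, attained for a suitably chosen proper structure (hence the phrase ``for a particular proper structure'' in the proof of Theorem \ref{thm3}). Neither of these affects your two inequalities, but both should be fixed.
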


Finally, the next theorem provides a relationship between the exact number of algebraically independent polynomials in $\mathcal{P}(\mathbf{\breve{\Omega}})$ and a geometric property on $\mathbf{\breve{\Omega}}$.

\begin{theorem}\label{thm4}
Assume that Assumption $B_j$ holds. The polynomials in the set $\mathcal{P}(\mathbf{\breve{\Omega}})$ are algebraically dependent if and only if $ \left(\Pi_{i=j+1}^{d} r_i \right)  m_{j+1}(\mathbf{\breve{\Omega}}^{\prime}) - g_{j+1}(m_{j+1}(\mathbf{\breve{\Omega}}^{\prime})) < t$ \ for some subtensor $\mathbf{\breve{\Omega}}^{\prime} \in \mathbb{R}^{n_1 \times n_2 \times \cdots \times n_j \times t}$ of the constraint tensor $\mathbf{\breve{\Omega}}$.
\end{theorem}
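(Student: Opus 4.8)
The plan is to prove both directions by combining the exact variable count implicit in the proof of Theorem \ref{thm3} with two standard facts about algebraic independence and with the minimality characterization of Lemma \ref{lemma3new}. First I would fix a canonical core tensor with a proper structure, so that the counting argument in the proof of Theorem \ref{thm3} becomes an exact statement: for any subtensor $\mathbf{\breve{\Omega}}' \in \mathbb{R}^{n_1 \times \cdots \times n_j \times t}$, the number of unknown core-tensor entries (variables) appearing in the polynomials of $\mathcal{P}(\mathbf{\breve{\Omega}}')$ is \emph{exactly} $\left(\Pi_{i=j+1}^{d} r_i \right) m_{j+1}(\mathbf{\breve{\Omega}}^{\prime}) - g_{j+1}(m_{j+1}(\mathbf{\breve{\Omega}}^{\prime}))$, and not merely an upper bound on the number of independent ones.

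For the \emph{if} direction I would assume the displayed inequality holds for some subtensor $\mathbf{\breve{\Omega}}'$ with $t$ columns in the $(j+1)$-th dimension. The exact count above then shows that $\mathcal{P}(\mathbf{\breve{\Omega}}')$ consists of $t$ polynomials that together involve strictly fewer than $t$ variables. Since $t$ elements of a field of transcendence degree less than $t$ over $\mathbb{R}$ are necessarily algebraically dependent, $\mathcal{P}(\mathbf{\breve{\Omega}}')$ is algebraically dependent; and because every subset of an algebraically independent set is itself algebraically independent, any set containing an algebraically dependent subset is dependent, so $\mathcal{P}(\mathbf{\breve{\Omega}})$ is algebraically dependent.

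For the \emph{only if} direction I would assume $\mathcal{P}(\mathbf{\breve{\Omega}})$ is algebraically dependent and pass to a minimal algebraically dependent subset, i.e., a subset that is dependent but all of whose proper subsets are independent. Such a subset corresponds to selecting some $t$ columns of $\mathbf{\breve{\Omega}}$ in the $(j+1)$-th dimension, hence to a subtensor $\mathbf{\breve{\Omega}}' \in \mathbb{R}^{n_1 \times \cdots \times n_j \times t}$ for which $\mathcal{P}(\mathbf{\breve{\Omega}}')$ is minimally algebraically dependent. Lemma \ref{lemma3new} pins the number of involved variables at exactly $t-1$, and equating this with the exact count above gives $\left(\Pi_{i=j+1}^{d} r_i \right) m_{j+1}(\mathbf{\breve{\Omega}}^{\prime}) - g_{j+1}(m_{j+1}(\mathbf{\breve{\Omega}}^{\prime})) = t-1 < t$, which establishes the inequality for this $\mathbf{\breve{\Omega}}'$.

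The routine ingredients are the transcendence-degree bound and the hereditary property of algebraic independence; the step needing the most care is confirming that the count in Theorem \ref{thm3} is genuinely exact once a proper structure is fixed, so that it may be equated with the value $t-1$ supplied by Lemma \ref{lemma3new}, together with the bookkeeping that identifies minimal dependent subsets of $\mathcal{P}(\mathbf{\breve{\Omega}})$ with column-subtensors $\mathbf{\breve{\Omega}}'$. The genuine obstacle is isolated entirely in Lemma \ref{lemma3new} (the minimal-circuit variable count), so that within Theorem \ref{thm4} itself no further hard analysis is required.
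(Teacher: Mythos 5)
Your overall skeleton matches the paper's proof (minimal dependent subset plus Lemma \ref{lemma3new} in one direction, the counting bound of Theorem \ref{thm3} in the other), but the claim you lean on --- that after fixing one canonical proper structure the number of variables involved in $\mathcal{P}(\mathbf{\breve{\Omega}}^{\prime})$ equals $\left(\Pi_{i=j+1}^{d} r_i \right) m_{j+1}(\mathbf{\breve{\Omega}}^{\prime}) - g_{j+1}(m_{j+1}(\mathbf{\breve{\Omega}}^{\prime}))$ \emph{for every} subtensor $\mathbf{\breve{\Omega}}^{\prime}$ --- is false, and this breaks your ``if'' direction. Recall that $g_{j+1}(x)$ is defined in \eqref{eqdefgmax} as the \emph{maximum} number of known entries over all choices of $x$ rows of $\mathbf{\widetilde{C}}_{(j)}$. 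Once the proper structure (the location of the identity blocks $\mathbf{P}_i$) is fixed, a given subtensor $\mathbf{\breve{\Omega}}^{\prime}$ touches a specific set of $m_{j+1}(\mathbf{\breve{\Omega}}^{\prime})$ rows, and the number of known entries lying in those rows can be strictly smaller than $g_{j+1}(m_{j+1}(\mathbf{\breve{\Omega}}^{\prime}))$ (for instance zero, if the touched rows avoid the identity blocks entirely). Hence for a fixed structure the displayed formula is only a \emph{lower} bound on the number of involved variables. In your ``if'' direction you need the opposite inequality: from ``formula $< t$'' you conclude ``fewer than $t$ variables,'' which does not follow. The paper avoids this by invoking Theorem \ref{thm3} directly: the formula upper-bounds the number of \emph{algebraically independent} polynomials (a parametrization-independent quantity), because one may choose, for each subtensor separately, a proper structure adapted to it in which the known entries are packed into the touched rows; that is precisely the role of the phrase ``for a particular proper structure'' in the proof of Theorem \ref{thm3}.

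Your ``only if'' direction survives, but for a reason different from the one you give: there you only need ``formula $\leq$ number of involved variables $= t-1$,'' and the left inequality is exactly the lower-bound property above (the paper phrases it as the formula being the \emph{minimum possible} number of involved variables, since $g_{j+1}$ is the maximum number of known entries that can be excluded). Your claimed equality with $t-1$ is unjustified but harmless there. To repair the proposal, replace the exactness claim by: (i) in the ``if'' direction, cite Theorem \ref{thm3} as an upper bound on the number of algebraically independent polynomials in $\mathcal{P}(\mathbf{\breve{\Omega}}^{\prime})$, which is $< t$ by hypothesis, so the $t$ polynomials are dependent and hence so is the superset $\mathcal{P}(\mathbf{\breve{\Omega}})$; (ii) in the ``only if'' direction, use the lower-bound property together with Lemma \ref{lemma3new} to get formula $\leq t-1 < t$. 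With those substitutions your argument coincides with the paper's.
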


\begin{proof}
If the polynomials in set $\mathcal{P}(\mathbf{\breve{\Omega}})$ are algebraically dependent, then there exists a subset of the polynomials that are minimally algebraically dependent. According to Lemma \ref{lemma3new}, if $\mathbf{\breve{\Omega}}^{\prime} \in \mathbb{R}^{n_1 \times n_2 \times \cdots \times n_j \times t}$ is the corresponding subtensor to this minimally algebraically dependent set of polynomials, the number of variables that are involved in $\mathcal{P}(\mathbf{\breve{\Omega}}^{\prime})=\{p_1,,p_2\dots,p_t\}$ is equal to $t-1$. On the other hand, $\left(\Pi_{i=j+1}^{d} r_i \right)  m_{j+1}(\mathbf{\breve{\Omega}}^{\prime}) - g_{j+1}(m_{j+1}(\mathbf{\breve{\Omega}}^{\prime}))$ is the minimum possible number of involved variables in $\mathcal{P}(\mathbf{\breve{\Omega}}^{\prime})$ since $g_{j+1}(m_{j+1}(\mathbf{\breve{\Omega}}^{\prime}))$ is the maximum number of known entries of core tensor that are involved in $\mathcal{P}(\mathbf{\breve{\Omega}}^{\prime})$. Therefore, $\left(\Pi_{i=j+1}^{d} r_i \right)  m_{j+1}(\mathbf{\breve{\Omega}}^{\prime}) - g_{j+1}(m_{j+1}(\mathbf{\breve{\Omega}}^{\prime})) \leq t-1 $.

In order to prove the other side of the statement, assume that $ \left(\Pi_{i=j+1}^{d} r_i \right)  m_{j+1}(\mathbf{\breve{\Omega}}^{\prime}) - g_{j+1}(m_{j+1}(\mathbf{\breve{\Omega}}^{\prime})) < t$ \ for some subtensor $\mathbf{\breve{\Omega}}^{\prime} \in \mathbb{R}^{n_1 \times n_2 \times \cdots \times n_j \times t}$ of the constraint tensor $\mathbf{\breve{\Omega}}$. Recall that $t$ is the number of polynomials in $\mathcal{P}(\mathbf{\breve{\Omega}}^{\prime})$. On the other hand, according to Theorem \ref{thm3}, $ \left(\Pi_{i=j+1}^{d} r_i \right)  m_{j+1}(\mathbf{\breve{\Omega}}^{\prime}) - g_{j+1}(m_{j+1}(\mathbf{\breve{\Omega}}^{\prime}))$ is the maximum number of algebraically independent polynomials, and therefore the polynomials in $\mathcal{P}(\mathbf{\breve{\Omega}}^{\prime})$ are not algebraically independent and it completes the proof.
\end{proof}

\subsection{Finite Completability Using Analysis on Tucker Manifold}\label{sec4}

Theorem \ref{thm4} together with Lemma \ref{thm2} can lead to a necessary and sufficient condition on the constraint tensor $\mathbf{\breve{\Omega}}$ in order to ensure that there are finitely many completions for the sampled tensor $\mathcal{U}$, as stated by the next theorem.

\begin{theorem}\label{thm5}
Assume that Assumptions $A_j$ and $B_j$ hold. Then, for almost every $\mathcal{U}$, there are only finitely many tensors that fit in the sampled tensor $\mathcal{U}$, and have tensor rank components $r_i$ for $i = j+1,\dots,d$ if and only if the following two conditions hold: 

(i) there exists a subtensor $\mathbf{\breve{\Omega}}^{\prime} \in \mathbb{R}^{n_1 \times n_2 \times \cdots \times n_j \times n}$ of the constraint tensor such that \ $ n = \left(\Pi_{i=1}^{j} n_i\right) \left(  \Pi_{i=j+1}^{d} r_i\right)  - \left( \sum_{i=j+1}^{d}  r_i^{2} \right)$, and 

(ii) for any $t \in \{1,\dots,n\}$ and any subtensor $\mathbf{\breve{\Omega}}^{\prime \prime} \in \mathbb{R}^{n_1 \times n_2 \times \cdots \times n_j \times t}$ of the tensor $\mathbf{\breve{\Omega}}^{\prime}$ (in condition (i)), the following inequality holds
\begin{eqnarray}\label{ineqp}
\left(\Pi_{i=j+1}^{d} r_i \right)  m_{j+1}(\mathbf{\breve{\Omega}}^{\prime \prime}) - g_{j+1}(m_{j+1}(\mathbf{\breve{\Omega}}^{\prime \prime})) \geq t.
\end{eqnarray}
\end{theorem}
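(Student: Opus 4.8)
The plan is to stitch together Lemma~\ref{thm2}, which reduces finite completability to the existence of exactly $n$ algebraically independent polynomials in $\mathcal{P}(\mathbf{\breve{\Omega}})$ where $n = \left(\Pi_{i=1}^{j} n_i\right)\left(\Pi_{i=j+1}^{d} r_i\right) - \left(\sum_{i=j+1}^{d} r_i^2\right)$, with Theorem~\ref{thm4}, which translates algebraic (in)dependence of a polynomial set into the geometric inequality on subtensors of the constraint tensor. The whole argument is a clean chaining of these two results into an ``if and only if'', and no new algebraic-geometry machinery should be needed.

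For the forward direction, I would assume there are only finitely many completions. By Lemma~\ref{thm2} there exist $n$ algebraically independent polynomials in $\mathcal{P}(\mathbf{\breve{\Omega}})$. By the construction in Section~\ref{consttensdef}, each polynomial in $\mathcal{P}(\mathbf{\breve{\Omega}})$ corresponds to exactly one slice of $\mathbf{\breve{\Omega}}$ in dimension $(j+1)$ (a subtensor in $\mathbb{R}^{n_1 \times \cdots \times n_j \times 1}$), so collecting the $n$ slices that realize these polynomials produces a subtensor $\mathbf{\breve{\Omega}}^{\prime} \in \mathbb{R}^{n_1 \times \cdots \times n_j \times n}$ whose polynomial set $\mathcal{P}(\mathbf{\breve{\Omega}}^{\prime})$ is precisely these $n$ algebraically independent polynomials. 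This establishes condition (i). For condition (ii), I would apply Theorem~\ref{thm4} with $\mathbf{\breve{\Omega}}^{\prime}$ playing the role of the constraint tensor: since $\mathcal{P}(\mathbf{\breve{\Omega}}^{\prime})$ is algebraically independent (hence not algebraically dependent), the contrapositive of Theorem~\ref{thm4} forces $\left(\Pi_{i=j+1}^{d} r_i\right) m_{j+1}(\mathbf{\breve{\Omega}}^{\prime\prime}) - g_{j+1}(m_{j+1}(\mathbf{\breve{\Omega}}^{\prime\prime})) \geq t$ for every subtensor $\mathbf{\breve{\Omega}}^{\prime\prime} \in \mathbb{R}^{n_1 \times \cdots \times n_j \times t}$ of $\mathbf{\breve{\Omega}}^{\prime}$, which is exactly (ii).

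For the reverse direction, I would assume (i) and (ii) and run the same implications backward. Condition (i) supplies the subtensor $\mathbf{\breve{\Omega}}^{\prime}$ carrying $n$ polynomials, and condition (ii) states that no subtensor of $\mathbf{\breve{\Omega}}^{\prime}$ violates the inequality; applying Theorem~\ref{thm4} to $\mathbf{\breve{\Omega}}^{\prime}$ then yields that $\mathcal{P}(\mathbf{\breve{\Omega}}^{\prime})$ is algebraically independent. Because $\mathcal{P}(\mathbf{\breve{\Omega}}^{\prime}) \subseteq \mathcal{P}(\mathbf{\breve{\Omega}})$ consists of $n$ algebraically independent polynomials, Lemma~\ref{thm2} immediately delivers finite completability for almost every $\mathcal{U}$.

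The main obstacle I anticipate lies not in the logic but in justifying that Theorem~\ref{thm4} may legitimately be invoked with $\mathbf{\breve{\Omega}}^{\prime}$ in place of the full constraint tensor $\mathbf{\breve{\Omega}}$. I would address this by noting that the proof of Theorem~\ref{thm4} (via Lemma~\ref{lemma3new} and Theorem~\ref{thm3}) uses only properties of subtensors together with the functions $m_{j+1}$ and $g_{j+1}$, all of which are intrinsic to whatever binary tensor is being treated as the constraint tensor, so the statement applies verbatim to $\mathbf{\breve{\Omega}}^{\prime}$. A secondary point to verify carefully is the slice-to-polynomial bookkeeping---that choosing $n$ independent polynomials is the same as choosing a last-dimension-$n$ subtensor---which rests on the fact, established in Section~\ref{consttensdef}, that each dimension-$(j+1)$ slice of $\mathbf{\breve{\Omega}}$ encodes precisely one polynomial of $\mathcal{P}(\Omega)$.
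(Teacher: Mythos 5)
Your proposal is correct and takes essentially the same route as the paper's proof: it chains Lemma~\ref{thm2} (finite completability $\Leftrightarrow$ existence of $n$ algebraically independent polynomials in $\mathcal{P}(\mathbf{\breve{\Omega}})$) with Theorem~\ref{thm4} (algebraic independence $\Leftrightarrow$ the inequality \eqref{ineqp} holding for all subtensors). Your extra care in assembling the $n$ slices into $\mathbf{\breve{\Omega}}^{\prime}$ and in justifying that Theorem~\ref{thm4} applies verbatim with $\mathbf{\breve{\Omega}}^{\prime}$ in place of $\mathbf{\breve{\Omega}}$ simply makes explicit what the paper's terse proof leaves implicit.
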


\begin{proof}
According to Lemma \ref{thm2}, for almost every $\mathcal{U}$, there are finitely many completions of $\mathcal{U}$ if and only if there exist $\left(\Pi_{i=1}^{j} n_i\right) \left(  \Pi_{i=j+1}^{d} r_i\right)  - \left(\sum_{i=j+1}^{d}   r_i^{2} \right)$ algebraically independent polynomials in $\mathcal{P}(\mathbf{\breve{\Omega}})$. On the other hand, according to Theorem \ref{thm4} we conclude that a set of polynomials are algebraically independent if and only if condition (ii) in the statement of the theorem holds. Hence, for almost every $\mathcal{U}$, there are finitely many completions of $\mathcal{U}$ if and only if conditions (i) and (ii) hold.
\end{proof}

\begin{remark}
As a sanity check, we next show that when $d=2$ (matrix case), Theorem \ref{thm5}  reduces to Theorem 1 in \cite{charact}. To see this, note that for $d=2$, we only have one rank component and denote it by $r$. Then, Condition (i) states that there exists a submatrix $\breve{\mathbf{\Omega}}^{\prime} \in \mathbb{R}^{n_1 \times (n_1r-r^2)}$ (basically $n = n_1r - r^2$ in Condition (i)) of the constraint matrix $\breve{\mathbf{\Omega}}$. And Condition (ii) on the property of submatrix $\breve{\mathbf{\Omega}}^{\prime}$ becomes: for any $t \in \{1,\dots,n_1r - r^2\}$ and any submatrix $\mathbf{\breve{\Omega}}^{\prime \prime} \in \mathbb{R}^{n_1 \times t}$ of the matrix $\mathbf{\breve{\Omega}}^{\prime}$, the following inequality holds
\begin{eqnarray}\label{ineqpcomprew}
r  m_{1}(\mathbf{\breve{\Omega}}^{\prime \prime}) - g_{1}(m_{1}(\mathbf{\breve{\Omega}}^{\prime \prime})) \geq t.
\end{eqnarray}

Note that due to the way that we constructed the constraint matrix (tensor) each column of $\mathbf{\breve{\Omega}}^{\prime \prime}$ has exactly $r+1$ non-zero entries. Therefore, $\mathbf{\breve{\Omega}}^{\prime \prime}$ has at least $r+1$ non-zero rows, i.e., $m_{1}(\mathbf{\breve{\Omega}}^{\prime \prime}) \geq r+1$. Then, according to the definition of $g_1$ in \eqref{eqdefgmax}, $g_{1}(m_{1}(\mathbf{\breve{\Omega}}^{\prime \prime})) = r \min\{r,m_{1}(\mathbf{\breve{\Omega}}^{\prime \prime})-0\} = r^2$. Therefore,  \eqref{ineqpcomprew} becomes
\begin{eqnarray}\label{ineqpcomprew2}
r  m_{1}(\mathbf{\breve{\Omega}}^{\prime \prime}) - r^2 \geq t,
\end{eqnarray}
or equivalently,
\begin{eqnarray}\label{ineqpcomprew3}
 m_{1}(\mathbf{\breve{\Omega}}^{\prime \prime})  \geq t/r + r.
\end{eqnarray}
Hence, Theorem  \ref{thm5}  for $d=2$ is exactly the same as  Theorem 1 in \cite{charact}.
\end{remark}

There are a few observations based on Theorem \ref{thm5}:

\begin{itemize}
\item In theorem \ref{thm5}, we characterize all of the sampling patterns that ensure there are only finitely many completions such that if only one single sample from that pattern is missed, then there are infinitely many completions.

\item If we use the Grassmannian analysis on each dimension individually, we will only obtain a {\em sufficient} condition on the sampling patterns for finite completability, given multiple rank constraints. However, our proposed analysis on Tucker manifold results in a {\em necessary and sufficient} condition on the sampling patterns for finite completability when multiple rank components are given. Hence, our proposed analysis on Tucker manifold in general requires much less number of of samples to ensure  finite completability in comparison with the analysis on Grassmannian manifold when more than one rank component is given.
      
\item It is also important to observe the advantage of the proposed method as we decrease the value of $j$ since we are incorporating more rank components. Intuitively, for the case of $j=1$, the polynomials obtained in \eqref{nonlinear2} involve much more variables in comparison with the low-order analysis on Grassmannian manifold, i.e., $j=d-1$. Therefore, the case of $j=1$ requires much less number of samples in order to have sufficient number of algebraically independent polynomials for finite completability.

\item Note that our analysis is not valid when $j=0$ (given all rank components) since the defined proper structure does not have meaning any more as zero unfolding does not exist, and therefore we are not able to characterize the necessary and sufficient condition on sampling pattern for finite completability. However, if all rank components are given we can simply ignore one of them and characterize the necessary and sufficient condition for $j=1$ which results in a sufficient condition for $j=0$.
\end{itemize}

If the number of completions given $(r_{j+1},\dots,r_d)$ is finite, it can be concluded that the number of completions given $(r_{j},\dots,r_d)$ is finite as well. Therefore, this property should be verifiable through the geometric property \eqref{ineqp} proposed in Theorem \ref{thm5}. In the following lemma, using the pigeonhole principle, we show this result without analyzing the algebraic variety.

\begin{lemma}\label{subpro}
Assume that Assumptions $A_j$, $B_j$, $A_{j-1}$ and $B_{j-1}$ hold. Suppose that the sampling pattern is such that the properties (i) and (ii) in the statement of Theorem \ref{thm5} hold for $j$. Then, the properties (i) and (ii) in the statement of Theorem \ref{thm5} hold for $j-1$.
\end{lemma}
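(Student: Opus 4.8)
The plan is to verify directly, at the level of the combinatorial conditions (i) and (ii) of Theorem~\ref{thm5}, that finiteness is inherited when the extra rank component $r_j$ is adjoined, transferring a witness for $j$ into a witness for $j-1$ by counting rather than by re-running the algebraic-variety argument. The guiding intuition is monotonic: fixing $r_j$ can only shrink the solution set, so a pattern forcing finitely many completions given $(r_{j+1},\dots,r_d)$ must still force it given $(r_j,\dots,r_d)$; the content is to read this off from (i) and (ii) alone.

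First I would set up the dictionary between the two constructions, writing $R=\Pi_{i=j+1}^{d}r_i$. For parameter $j$, a polynomial (a nonzero column of $\mathbf{\breve{\Omega}}$) from an observed entry $\mathcal{U}(\vec{x})$ involves the $R$ entries in row $(x_1,\dots,x_j)$ of $\widetilde{\mathbf{C}}_{(j)}$, whereas for $j-1$ the same entry involves the $r_jR$ entries in row $(x_1,\dots,x_{j-1})$ of $\widetilde{\mathbf{C}}_{(j-1)}$; thus passing from $j$ to $j-1$ merges the $n_j$ rows sharing a fixed $(x_1,\dots,x_{j-1})$ into a single row while widening it by the factor $r_j$. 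Alongside this I would record the recursion $g_j(x)=\min\{r_j,x\}\,r_j+g_{j+1}\big((x-r_j)^+\big)$ between the bookkeeping functions of \eqref{eqdefgmax}, and the fact that $Rx-g_{j+1}(x)$ is non-decreasing with maximum $n=\big(\Pi_{i=1}^{j}n_i\big)R-\sum_{i>j}r_i^2$ attained only at $x=\Pi_{i=1}^{j}n_i$ (using Assumption $B_j$). Applying condition (ii) for $j$ at $t=n$ then forces $m_{j+1}$ of the full $j$-witness to be maximal, i.e.\ every row of $\widetilde{\mathbf{C}}_{(j)}$ is touched; by pigeonhole every tuple $(x_1,\dots,x_{j-1})$ is covered, hence after merging every row of $\widetilde{\mathbf{C}}_{(j-1)}$ is touched too.

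Next I would secure condition (i) for $j-1$ by a count. Condition (i) for $j$ gives $K_j=N_\Omega(\mathcal{U})-\sum_{i>j}n_ir_i\ge n$, and since $K_{j-1}=K_j-n_jr_j$, the required bound $K_{j-1}\ge n^\prime$ on the number of available $j-1$ polynomials ($n^\prime$ the target dimension for $j-1$) reduces to $n-n^\prime\ge n_jr_j$, i.e.\ to $(n_j-r_j)\big(\Pi_{i=1}^{j-1}n_i\cdot R-r_j\big)\ge0$; this holds because $n_j\ge r_j$ and, by Assumption $B_{j-1}$, $\Pi_{i=1}^{j-1}n_i\ge\sum_{i\ge j}r_i\ge r_j$. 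The remaining and genuinely harder step is to exhibit $n^\prime$ of these polynomials that satisfy condition (ii) for $j-1$: I would make the selection so as to preserve the covering of $(x_1,\dots,x_{j-1})$-rows obtained above, and then check $r_jR\,m_j(\mathbf{\breve{\Omega}}^{\prime\prime})-g_j\big(m_j(\mathbf{\breve{\Omega}}^{\prime\prime})\big)\ge t$ for an arbitrary subcollection by grouping its $t$ columns according to their $(x_1,\dots,x_{j-1})$ values and matching, block by block, against the inequality \eqref{ineqp} already known for $j$, absorbing the known-entry deficits through the recursion for $g$.

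The main obstacle is precisely this uniform verification of (ii) over \emph{all} subcollections, not just the full witness. The two polynomial systems are not nested---$\mathcal{P}$ for $j-1$ is not a subset of $\mathcal{P}$ for $j$, since determining the additional factor $\mathbf{T}_j$ consumes $n_jr_j$ further entries and changes which monomials appear---so one cannot merely restrict the $j$-witness but must rebuild the constraint tensor. The delicate accounting arises when several merged row-blocks are only partially covered: there $g_j(m_j)$ must be split, via the recursion, into a mode-$j$ part $\min\{r_j,\cdot\}\,r_j$ and a residual $g_{j+1}$ part, and it is exactly here that a pigeonhole estimate on how the touched rows distribute across the $(x_1,\dots,x_{j-1})$-blocks is needed to keep the count of involved variables from dropping below the count $t$ of selected polynomials.
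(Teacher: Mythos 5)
Your setup is sound and you have correctly located where the difficulty sits, but the proposal leaves that difficulty unresolved, and the mechanism you sketch for it would not work. The paper's proof hinges on one idea that is absent from your plan: after reducing to the combinatorial statement (with $f_j(\mathcal{S})$ counting distinct first-$j$-coordinate tuples, playing the role of your $m_{j+1}$), it partitions the $n$ witness entries into $n_j$ groups according to the value of the $j$-th coordinate $x_j$, and keeps only the $r_j$ largest groups, which by pigeonhole still contain at least $\frac{r_j}{n_j}\, n \geq n^{\prime}$ entries. This deliberate restriction of the $x_j$-spread is exactly what makes condition (ii) for $j-1$ provable for \emph{every} subcollection: splitting an arbitrary subset $\mathcal{S}^{\prime}$ of the retained entries as $\mathcal{S}^{\prime}=\cup_{i=1}^{r_j}\mathcal{S}^{\prime}_i$ by group, one has $f_j(\mathcal{S}^{\prime}_i)=f_{j-1}(\mathcal{S}^{\prime}_i)$ inside each group (the $j$-th coordinate is constant there), so the level-$j$ inequality applied to each $\mathcal{S}^{\prime}_i$ and summed over the at most $r_j$ groups, using monotonicity of $x\mapsto Rx-g_{j+1}(x)$ with $R=\Pi_{i=j+1}^{d}r_i$, yields
\begin{eqnarray}
\left(\Pi_{i=j}^{d} r_i\right) f_{j-1}(\mathcal{S}^{\prime}_1) - r_j\, g_{j+1}(f_{j-1}(\mathcal{S}^{\prime}_1)) \geq |\mathcal{S}^{\prime}|, \nonumber
\end{eqnarray}
after which the discrepancy between $r_j g_{j+1}$ and $g_j$ is absorbed using $r_j \sum_{i=j+1}^{d} r_i^2 \geq \sum_{i=j}^{d} r_i^2$.

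Your plan instead selects entries ``so as to preserve the covering of $(x_1,\dots,x_{j-1})$-rows'' and then groups an arbitrary subcollection by its $(x_1,\dots,x_{j-1})$ values, matching block by block against the level-$j$ inequality. This cannot close the gap, because the level-$j$ inequality is too weak within a block: entries sharing one $(x_1,\dots,x_{j-1})$ but spread over all $n_j$ values of $x_j$ can number up to $R\,n_j - g_{j+1}(n_j)$ without violating any level-$j$ constraint (take $f_j = n_j$), while the level-$(j-1)$ inequality permits only $r_j R - g_j(1) = r_j(R-1)$ of them, since $f_{j-1}=1$ for that block; with $n_j > r_j$ the former vastly exceeds the latter. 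So a perfectly valid $j$-witness can contain such a concentrated block, and any $(j-1)$-witness that keeps it — which your covering criterion does not forbid — violates condition (ii). The cure must be to discard entries so that at most $r_j$ distinct values of $x_j$ survive; the needed pigeonhole is over the $x_j$-groups (keep the $r_j$ fullest ones), not over how touched rows distribute across $(x_1,\dots,x_{j-1})$-blocks as you suggest. Relatedly, your count $K_{j-1}=K_j-n_jr_j\geq n^{\prime}$ only shows enough polynomials exist in total; condition (i) requires exhibiting $n^{\prime}$ specific columns that also satisfy (ii), and that is precisely what the missing selection provides.
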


\begin{proof}
The proof is given in Appendix \ref{apdx1}.
\end{proof}

\section{Probabilistic Conditions for Finite Completability}\label{corrcetion}

In this section, two different lower bounds on the sampling probability are proposed and analyzed  to ensure  finite completability. The first bound is obtained by applying \cite[Theorem $3$]{charact} and the second bound is obtained through the proposed geometric approach in Theorem \ref{thm5}. We will observe later that our proposed analysis on Tucker manifold leads to a better lower bound through numerical analysis.

Here we briefly outline the key steps of the second approach. Lemmas \ref{minsamp} and \ref{minsamp2} each provides a lower bound on the sampling probability that results in a geometric property  for $\Omega$, i.e., inequalities \eqref{proper1} and \eqref{proper2}, respectively. Then, Theorem \ref{thm6} takes advantage of the above lemmas to propose a bound on the sampling probability to guarantee property \eqref{ineqp} for $\Omega$. Finally, Lemma \ref{Omega} shows that \eqref{ineqp} also holds for the constraint tensor $\mathbf{\breve{\Omega}}$. In order to show Lemma \ref{Omega} we develop a generalization of Hall's Theorem on bipartite graphs in Theorem \ref{genHall}.

We use the approach similar to \cite[Lemma $9$]{charact} in order to apply Theorem \ref{thm5} and obtain a lower bound on the sampling probability to ensure that there are only finitely many completions. According to our earlier discussion, Theorem \ref{thm5} for the case of $j=1$ results in the mildest condition on the sampling patterns for ensuring finite completability. In other words, setting $j=1$ in Theorem \ref{thm5} results in a tighter lower bound.

\subsection{Lower Bound on Sampling Probability based on Analysis on Grassmannian Manifold}

We first state a lemma, whose  corollary (Corollary \ref{azuma}) is used extensively in this section in order to find a lower bound on the sampling probability to guarantee a lower bound on the number of sampled entries with high probability.

\begin{lemma}\label{xgfh}
Consider a vector with $n_i$ entries and assume each entry is observed with probability $p$ and independently from the other entries. Then, with probability  at least $1-\exp(-\frac{n_i}{2c^2})$ at least $(p - \frac{1}{c})n_i$ entries are observed.
\end{lemma}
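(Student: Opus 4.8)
The plan is to model the number of observed entries as a sum of independent indicator random variables and then control its lower tail with a standard Azuma--Hoeffding concentration bound. First I would let $X_k$, for $k = 1, \dots, n_i$, be the indicator that the $k$-th entry of the vector is observed, so that the $X_k$ are independent Bernoulli random variables with $\mathbb{P}(X_k = 1) = p$. Writing $X = \sum_{k=1}^{n_i} X_k$ for the total number of observed entries, linearity of expectation gives $\mathbb{E}[X] = p\,n_i$. The event that at least $(p - \tfrac{1}{c})n_i$ entries are observed is exactly $\{X \geq (p - \tfrac{1}{c})n_i\}$, whose complement is the lower-tail deviation event $\{X - \mathbb{E}[X] \leq -\tfrac{n_i}{c}\}$. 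Thus it suffices to bound the probability of this deviation by $\exp(-\tfrac{n_i}{2c^2})$ and pass to the complement.

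To bound the lower tail I would apply the Azuma--Hoeffding inequality to the Doob martingale $M_k = \mathbb{E}[X \mid X_1, \dots, X_k]$ obtained by revealing the coordinates one at a time, with $M_0 = \mathbb{E}[X]$ and $M_{n_i} = X$. Since each $X_k$ is supported on $\{0,1\}$, revealing a single coordinate changes the conditional expectation of $X$ by $|M_k - M_{k-1}| = |X_k - p| \leq 1$; hence the martingale increments are bounded by $c_k = 1$, and $\sum_{k=1}^{n_i} c_k^2 = n_i$. The inequality then gives, for any $t > 0$,
\[
\mathbb{P}\left(X - \mathbb{E}[X] \leq -t\right) \leq \exp\left(-\frac{t^2}{2\sum_{k=1}^{n_i} c_k^2}\right) = \exp\left(-\frac{t^2}{2 n_i}\right).
\]
Substituting $t = n_i/c$ yields the bound $\exp(-\tfrac{n_i}{2c^2})$ on the complementary event, which completes the argument.

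Since this is essentially a textbook concentration estimate, I do not anticipate a conceptual obstacle; the only point requiring care is tracking the constant in the exponent. Using the Azuma--Hoeffding form with increment bound $c_k = 1$ produces exactly the factor $\tfrac{1}{2}$ appearing in the statement, whereas a direct application of Hoeffding's inequality for bounded independent summands would give the slightly stronger constant $2$, so one must be consistent about which inequality is invoked. The independence of the entries is precisely what licenses this bound, and no further structure of the vector is needed.
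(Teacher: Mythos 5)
Your proof is correct and follows essentially the same route as the paper: both arguments apply Azuma's inequality to the martingale formed by revealing the independent Bernoulli sampling variables one at a time (increments bounded by $1$), bound the lower-tail deviation of the number of observed entries below its mean $pn_i$, and set $t = n_i/c$ to obtain the stated exponent $\exp(-\frac{n_i}{2c^2})$. Your remark that a direct Hoeffding bound for independent bounded summands would improve the constant in the exponent is accurate, but the paper deliberately states the lemma in the Azuma form, which your derivation matches exactly.
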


\begin{proof}
Azuma's inequality states that for a martingale $\{ X_k: k = 0, 1, 2, ... \}$ that $|X_k - X_{k-1}| \leq 1$ holds, we have $P(X_n - \mathbb{E}[X_n] >t) \leq \exp\left(\frac{-t^2}{2n}\right)$ and $P(\mathbb{E}[X_n] - X_n > t) \leq \exp\left(\frac{-t^2}{2n}\right)$ \cite{janson2002concentration}. Therefore, using Azuma's inequality and the fact that sampling has Bernoulli distribution with parameter $p$, it can be seen that with probability  at most $\exp(-\frac{t^2}{2n_i})$ the number of observed entries is less than $n_i p - t$. Now, by setting $t$ equal to $\frac{n_i}{c}$ the proof is complete.
\end{proof}

\begin{corollary}\label{azuma}
Consider a vector with $n_i$ entries where each entry is observed with  probability  $p$  independently from the other entries. If $p > p^{\prime} = \frac{2r_i}{n_i} + \frac{1}{\sqrt[4]{n_i}}$, with probability  at least $\left(1-\exp(-\frac{\sqrt{n_i}}{2})\right)$, more than $2r_i$ entries are observed. Similarly, if $p > p^{\prime\prime} = \frac{12\log({\frac{en_i}{\epsilon}})}{n_i} + \frac{1}{\sqrt[4]{n_i}}$, with probability  at least $\left(1-\exp(-\frac{\sqrt{n_i}}{2})\right)$, more than $12\log({\frac{en_i}{\epsilon}})$ entries are observed.
\end{corollary}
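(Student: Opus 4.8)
The plan is to invoke Lemma \ref{xgfh} with the single parameter choice $c = \sqrt[4]{n_i}$ and then verify that this choice simultaneously produces the stated confidence level and both observation thresholds. The key observation driving the whole argument is that the additive term $\frac{1}{\sqrt[4]{n_i}}$ appearing in the definitions of both $p^{\prime}$ and $p^{\prime\prime}$ is precisely the $\frac{1}{c}$-loss incurred in Lemma \ref{xgfh} under this choice of $c$, so the corollary is really just a tuned specialization of that lemma.

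First I would substitute $c = \sqrt[4]{n_i}$ into the probability bound of Lemma \ref{xgfh}. Since $c^2 = \sqrt{n_i}$, we have $\frac{n_i}{2c^2} = \frac{n_i}{2\sqrt{n_i}} = \frac{\sqrt{n_i}}{2}$, so Lemma \ref{xgfh} guarantees that with probability at least $1 - \exp(-\frac{\sqrt{n_i}}{2})$ at least $\left(p - \frac{1}{\sqrt[4]{n_i}}\right)n_i$ entries are observed. This already matches the confidence level $1 - \exp(-\frac{\sqrt{n_i}}{2})$ claimed in both parts of the corollary, so it remains only to convert the count $\left(p - \frac{1}{\sqrt[4]{n_i}}\right)n_i$ into the two threshold statements.

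For the first claim, I would use the hypothesis $p > p^{\prime} = \frac{2r_i}{n_i} + \frac{1}{\sqrt[4]{n_i}}$, which rearranges to $p - \frac{1}{\sqrt[4]{n_i}} > \frac{2r_i}{n_i}$; multiplying through by $n_i$ gives $\left(p - \frac{1}{\sqrt[4]{n_i}}\right)n_i > 2r_i$, so on the same high-probability event more than $2r_i$ entries are observed. The second claim follows identically: the hypothesis $p > p^{\prime\prime} = \frac{12\log(\frac{en_i}{\epsilon})}{n_i} + \frac{1}{\sqrt[4]{n_i}}$ yields $\left(p - \frac{1}{\sqrt[4]{n_i}}\right)n_i > 12\log(\frac{en_i}{\epsilon})$, giving more than $12\log(\frac{en_i}{\epsilon})$ observed entries.

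Since the corollary is a direct specialization of Lemma \ref{xgfh}, I do not expect any genuine obstacle. The only substantive step is recognizing the correct scaling $c = \sqrt[4]{n_i}$, which is dictated by the arithmetic identity $\frac{n_i}{2c^2} = \frac{\sqrt{n_i}}{2}$; this is exactly what forces the confidence exponent into the desired form and simultaneously explains the $\frac{1}{\sqrt[4]{n_i}}$ slack deliberately built into the definitions of $p^{\prime}$ and $p^{\prime\prime}$.
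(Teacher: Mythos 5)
Your proposal is correct and follows exactly the paper's own proof, which simply sets $c=\sqrt[4]{n_i}$ in Lemma \ref{xgfh}; you merely spell out the arithmetic (that $\frac{n_i}{2c^2}=\frac{\sqrt{n_i}}{2}$ and that the hypotheses on $p$ rearrange to the stated thresholds) which the paper leaves implicit.
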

\begin{proof}
Both parts follow by setting  $c=\sqrt[4]{n_i}$ in Lemma \ref{xgfh}.
\end{proof}

The following theorem uses the matrix result for single rank constraint and provides a sufficient condition on the sampling pattern for finite completabilitily of tensor. We will assume that  
\begin{eqnarray}
r_i \leq  \frac{n_i}{6}, \quad \   
N_{-i} \geq  r_i(n_i-r_i)  \quad \ \text{for} \ \ i=1,2,\dots,d \label{ass2}.
\end{eqnarray}

\begin{theorem}\label{thm11}
Consider a randomly sampled tensor $\mathcal{U}$ with Tucker rank $(r_1,\cdots,r_d)$. Suppose that the inequalities in \eqref{ass2} hold. Moreover, assume that the sampling probability satisfies
\begin{eqnarray}\label{comp1}
p > \min_{ 1  \leq i \leq d} \ \left( \max \left( \frac{2r_i}{n_i} , \frac{12\log({\frac{en_i}{\epsilon}})}{n_i} \right) + \frac{1}{\sqrt[4]{n_i}} \right). \label{ass5}
\end{eqnarray}
Then, there are only finitely many completions of the sampled tensor $\mathcal{U}$ with the given rank vector with probability  at least $(1- 2\epsilon)$.
\end{theorem}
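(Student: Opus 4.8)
The plan is to reduce the tensor problem to a single matrix completion problem and then invoke the matrix result \cite[Theorem 3]{charact} directly. First I would fix the index $i^\star \in \{1,\dots,d\}$ that attains the minimum in \eqref{ass5} — since finite completability along any one matricization already suffices, choosing the most favorable matricization is what produces the mildest requirement on $p$. Consider then the $i^\star$-th matricization $\mathbf{U}_{(i^\star)} \in \mathbb{R}^{n_{i^\star} \times N_{-i^\star}}$, which has rank $r_{i^\star}$ by the definition of Tucker rank. The key observation is that any completion $\mathcal{U}'$ of $\mathcal{U}$ with Tucker rank $(r_1,\dots,r_d)$ has $i^\star$-th matricization of rank exactly $r_{i^\star}$ agreeing with the sampled entries of $\mathbf{U}_{(i^\star)}$; since matricization is merely a reshaping (a bijection between tensors and matrices of the corresponding size), the set of Tucker-rank-$(r_1,\dots,r_d)$ completions of $\mathcal{U}$ injects into the set of rank-$r_{i^\star}$ completions of $\mathbf{U}_{(i^\star)}$. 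Consequently, finitely many rank-$r_{i^\star}$ completions of the matrix $\mathbf{U}_{(i^\star)}$ forces finitely many completions of the tensor, so it suffices to establish finite completability of $\mathbf{U}_{(i^\star)}$.

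Next I would verify that the hypotheses of \cite[Theorem 3]{charact} are met for $\mathbf{U}_{(i^\star)}$. This matrix has $n_{i^\star}$ rows, $N_{-i^\star}$ columns, and rank $r_{i^\star}$, and the two inequalities in \eqref{ass2}, namely $r_{i^\star} \le n_{i^\star}/6$ and $N_{-i^\star} \ge r_{i^\star}(n_{i^\star}-r_{i^\star})$, are precisely the structural conditions the matrix theorem requires (bounded rank and enough columns relative to the degrees of freedom). For the sampling condition, note that because the tensor entries are drawn independently with probability $p$, the entries of $\mathbf{U}_{(i^\star)}$ — in particular those of each of its length-$n_{i^\star}$ columns — are sampled independently with probability $p$. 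Since $p$ exceeds $\max(2r_{i^\star}/n_{i^\star},\, 12\log(en_{i^\star}/\epsilon)/n_{i^\star}) + 1/\sqrt[4]{n_{i^\star}}$, Corollary \ref{azuma} guarantees that each column observes more than $\max\{2r_{i^\star},\, 12\log(en_{i^\star}/\epsilon)\}$ entries, each such event failing with probability at most $\exp(-\sqrt{n_{i^\star}}/2)$. Using $12\log(en_{i^\star}/\epsilon) = 12(\log(n_{i^\star}/\epsilon)+1)$, this matches the per-column sampling requirement of the matrix theorem exactly.

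Finally, feeding the per-column guarantee into \cite[Theorem 3]{charact} yields finite completability of $\mathbf{U}_{(i^\star)}$ with probability at least $1-2\epsilon$, where the two contributions of $\epsilon$ track the two components of the maximum in \eqref{ass5}: the $2r_{i^\star}$ term ensures the requisite structure on a sufficient set of columns, while the $12\log(en_{i^\star}/\epsilon)$ term controls the Hall-type matching condition via a union bound over column subsets. Combined with the reduction of the first paragraph, this gives finitely many tensor completions with probability at least $1-2\epsilon$. The step I expect to require the most care is this last probability bookkeeping: one must confirm that the per-column concentration failures (each of order $\exp(-\sqrt{n_{i^\star}}/2)$, summed over the $N_{-i^\star}$ columns by a union bound) are dominated by and absorbed into the stated $2\epsilon$, and that the genericity of $\mathbf{U}_{(i^\star)}$ inherited from the generically chosen Tucker tensor $\mathcal{U}$ is enough to place it outside the exceptional set on which the matrix theorem's conclusion could otherwise fail.
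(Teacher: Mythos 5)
Your proposal is correct and follows essentially the same route as the paper's own proof: pick the matricization index attaining the minimum in \eqref{ass5}, use Corollary \ref{azuma} to guarantee each column of that matricization has more than $\max\{2r_i, 12\log(en_i/\epsilon)\}$ observed entries, invoke \cite[Theorem 3]{charact} under the structural conditions \eqref{ass2}, and conclude since the Tucker-rank-$(r_1,\dots,r_d)$ completions form a subset of the rank-$r_i$ completions of that matricization. Your additional care about the union bound over columns and the inherited genericity is sound but does not change the argument.
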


\begin{proof}
Denote ${\ell}= \arg \min_{ 1  \leq i \leq d} \ \left( \max \left( \frac{2r_i}{n_i} , \frac{12\log({\frac{en_i}{\epsilon}})}{n_i} \right) + \frac{1}{\sqrt[4]{n_i}} \right)$. By \eqref{ass5} and using Corollary \ref{azuma}, with  probability  almost one the number of observed entries at each column of $\mathbf{U}_{({\ell})}$ is at least $\max\{2r_{\ell},12\log({\frac{en_{\ell}}{\epsilon}})\}$. Then, with \eqref{ass2} and using \cite[Theorem 3]{charact}, it follows that there are finitely many matrix completions for the observed entries of $\mathbf{U}_{({\ell})}$ with rank $r_{\ell}$. As a result, there are finitely many tensors with the ${\ell}$-th component of rank being $r_{\ell}$ that agree with the observed entries. The proof is complete since the set of tensors that are a completion with the rank vector $(r_1,\dots,r_d)$ is a subset of the set of tensors whose finiteness is shown above.
\end{proof}

\subsection{Lower Bound on Sampling Probability based on Analysis on Tucker Manifold}

We are interested in taking advantage of Theorem \ref{thm5} to obtain another lower bound on the sampling probability for finite completability. We assume $j=1$, and therefore the constraint tensor $\mathbf{\breve{\Omega}}$ is a second-order tensor, i.e., an ${n_1 \times K_1}$ matrix.

\begin{lemma}\label{minsamp}
Consider an arbitrary set $\Omega^{\prime}_{(1)}$ of $n_1 -1$ columns of $\Omega_{(1)}$ (first matricization of $\Omega$). Assume that each column of $\Omega_{(1)}$ includes at least $l$ nonzero entries, where 
\begin{eqnarray}\label{minlforset1}
l > 6 \ \log \left( {n_1} \right) + 2 \ \log \left( \frac{k}{\epsilon} \right) + 4. 
\end{eqnarray}
Then, with probability  at least $1-\frac{\epsilon}{k}$, every subset $\Omega^{\prime \prime}_{(1)}$ of columns of $\Omega^{\prime}_{(1)}$ satisfies 
\begin{eqnarray}\label{proper1}
m_{2}(\Omega^{\prime \prime}_{(1)}) -1 \geq t,
\end{eqnarray}
where $t$ is the number of columns of $\Omega^{\prime \prime}_{(1)}$ and $m_{2}(\Omega^{\prime \prime}_{(1)})$ is the number of nonzero rows of $\Omega^{\prime \prime}_{(1)}$ (observe that second matricization of a matrix is its transpose).
\end{lemma}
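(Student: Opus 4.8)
The plan is to control the probability that the expansion property \eqref{proper1} fails by a union bound over the offending column subsets, and to show that this failure probability is at most $\epsilon/k$ under \eqref{minlforset1}. Call a subset $\Omega''_{(1)}$ of $t$ columns of $\Omega'_{(1)}$ \emph{deficient} if $m_2(\Omega''_{(1)}) \le t$, that is, if the union of the supports of its $t$ columns lies inside some set $R$ of $t$ rows; then \eqref{proper1} holds for every subset exactly when no deficient subset exists. First I would reduce the range of $t$: since each column carries at least $l$ nonzero entries, a single column already meets at least $l$ rows, so the support union of any $t \ge 1$ columns meets at least $l$ rows and $m_2(\Omega''_{(1)}) \ge l$. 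Hence $m_2(\Omega''_{(1)}) \ge l > t$ automatically whenever $t \le l-1$, and it suffices to rule out deficient subsets of size $t \in \{l,\dots,n_1-1\}$.

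The core estimate is a per-column containment bound. Conditioned on a given column having exactly $s \ge l$ observed entries, its support is uniform among the $s$-subsets of $\{1,\dots,n_1\}$, so the probability that it is contained in a fixed row set $R$ with $|R|=t$ equals $\binom{t}{s}/\binom{n_1}{s}$; since $\binom{t}{s+1}/\binom{t}{s}\cdot\binom{n_1}{s}/\binom{n_1}{s+1} = (t-s)/(n_1-s)\le 1$, this ratio is nonincreasing in $s$, hence at most $\binom{t}{l}/\binom{n_1}{l} \le (t/n_1)^l$ for every $s \ge l$. Because the entries in distinct columns are sampled independently, for a fixed set $C$ of $t$ columns and a fixed $R$ with $|R|=t$ the probability that all columns of $C$ have support inside $R$ is at most $(t/n_1)^{lt}$. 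Summing over the at most $\binom{n_1-1}{t}$ choices of $C$ and the $\binom{n_1}{t}$ choices of $R$ gives, for each $t$,
\[
\mathbb{P}\bigl(\text{some deficient subset of size } t \text{ exists}\bigr) \;\le\; \binom{n_1-1}{t}\binom{n_1}{t}\left(\frac{t}{n_1}\right)^{lt} \;\le\; \binom{n_1}{t}^2\left(\frac{t}{n_1}\right)^{lt}.
\]

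The final step is to sum this over $t=l,\dots,n_1-1$ and show the total is below $\epsilon/k$. I expect the main obstacle to be the regime $t\to n_1$: there the crude estimate $\binom{n_1}{t}\le(en_1/t)^t$ is far too lossy, and one must instead exploit the symmetry $\binom{n_1}{t}=\binom{n_1}{n_1-t}$ by substituting $u=n_1-t\ge1$. With this substitution the $t$-th term is bounded by $(en_1/u)^{2u}\exp\!\bigl(-lu(1-u/n_1)\bigr)$; for $u\le n_1/2$ one uses $1-u/n_1\ge 1/2$, inserts $l = 6\log(n_1)+2\log(k/\epsilon)+4$, and checks that the exponent collapses to at most $-u\log(n_1k/\epsilon)$, so the terms are dominated by the geometric series $\sum_{u\ge1}(\epsilon/(n_1k))^u \le \epsilon/k$. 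The complementary regime $t\le n_1/2$ is easy, since there $(t/n_1)^{l-2}\le 2^{-(l-2)}$ makes each term $\bigl[e^2(t/n_1)^{l-2}\bigr]^t$ doubly-exponentially small in $l$ and the sum negligible. The dominant contribution thus comes from $t$ near $n_1$, and matching those terms against $(\epsilon/(n_1k))^u$ so that they telescope into a convergent geometric series bounded by $\epsilon/k$ is precisely what dictates the coefficient $6\log(n_1)$, the term $2\log(k/\epsilon)$, and the additive constant in \eqref{minlforset1}.
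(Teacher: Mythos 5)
Your proposal is correct and follows essentially the same route as the paper's proof: a union bound over column subsets and row sets with the per-column containment estimate $(t/n_1)^{l}$, a split into the regimes $t\le n_1/2$ (where $e^{2}2^{-(l-2)}$ makes the terms negligible) and $t$ near $n_1$ (handled via the complement substitution $u=n_1-t$, exactly the paper's second sum), and a geometric-series tally against $\epsilon/(n_1k)$. The only difference is cosmetic: the paper imports the union-bound inequalities (its display \eqref{ineq}, i.e., inequalities (12)--(13) of \cite{charact} with $r=1$) rather than rederiving them, whereas you reconstruct that bound from first principles via conditional uniformity of the support and monotonicity of $\binom{t}{s}/\binom{n_1}{s}$ in $s$.
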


\begin{proof}
The proof is similar to the proof of \cite[Lemma $9$]{charact} with some delicate modifications to improve the result for this case. Note that \eqref{minlforset1} can be rewritten as
\begin{eqnarray}
l &>& 2  \ \log \left( \frac{n_1  k}{ \epsilon}\right) + 4 \ \log (n_1)  + 4 \label{ineqrefl2} \\
&>& \log \left( \frac{n_1 e^2 k }{ \epsilon} \right) + 2. \label{ineqrefl1} 
\end{eqnarray}
 
Define $\mathcal{E}$ as the event that for some submatrix $\Omega^{\prime \prime}_{(1)} \in \mathbb{R}^{n_1 \times t}$ of the matrix $\Omega^{\prime}_{(1)}$   \eqref{proper1} does not hold. We are interested in finding an upper bound on the probability  of $\mathcal{E}$. Then, from the proof of \cite[Lemma $9$]{charact} and by setting $r=1$ in inequalities $(12)$ and $(13)$ in \cite{charact}, we have
\begin{eqnarray}\label{ineq}
P(\mathcal{E}) < \sum_{n=l}^{\frac{n_1}{2}} { n_1\choose{n}}^{2} \left( \frac{n}{n_1} \right) ^{ln_1} + \sum_{n=1}^{\frac{n_1}{2}} { n_1\choose{n_1-n}}^{2} \left( \frac{n_1-n}{n_1} \right) ^{l(n_1 - n)}.
\end{eqnarray}
Note that 
\begin{eqnarray}\label{ineqproofup}
{ s\choose{q}} = \frac{s(s-1)\dots(s-q+1)}{q!} \leq \frac{s^q}{q!} \leq \left(\frac{se}{q}\right)^{q},
\end{eqnarray}
where the last inequality holds since $e^q = \sum_{i=0}^{\infty} \frac{q^i}{i!} \leq \frac{q^q}{q!}$. Applying \eqref{ineqproofup} to each term of the first summation of \eqref{ineq} we obtain
\begin{eqnarray}
{ n_1\choose{n}}^{2} \left( \frac{n}{n_1} \right) ^{ln_1} < e^{2n} \left(\frac{n}{n_1}\right)^{(l-2)n} \leq \left( e^2  2^{-l+2} \right)^{n} <  \frac{\epsilon}{n_1  k},
\end{eqnarray}
where the last inequality follows from \eqref{ineqrefl1}. This directly results that the first summation in \eqref{ineq} is less than $\frac{\epsilon}{ 2k}$. 

Moreover, again by applying \eqref{ineqproofup} to each term of the second summation in \eqref{ineq} we obtain
\begin{align}
{ n_1\choose{n_1-n}}^{2} \left( \frac{n_1-n}{n_1} \right) ^{l(n_1 - n)} =& { n_1\choose{n}}^{2} \left( \frac{n_1-n}{n_1} \right) ^{l(n_1 - n)} < (n_1e)^{2n} \left(\frac{n_1 -n}{n_1}\right)^{\frac{ln_1}{2}} \nonumber \\
\leq& (n_1e)^{2n} e^{-\frac{ln}{2}} = \left( e^{2\log n_1 + 2 - \frac{l}{2}} \right) ^ n < \frac{\epsilon}{n_1  k},
\end{align}
where the last inequality follows from \eqref{ineqrefl2}. This directly results that the second summation in \eqref{ineq} is less than $\frac{\epsilon}{2k}$ and that completes the proof.
\end{proof}

\begin{lemma}\label{minsamp2}
Consider an arbitrary set $\Omega^{\prime}_{(1)}$ of $n_1 $ columns of $\Omega_{(1)}$ (first matricization of $\Omega$). Assume that each column of $\Omega_{(1)}$ includes at least $l$ nonzero entries, where $l > 6 \ \log \left( n_1 \right) + 2 \ \log \left( \frac{2k}{\epsilon} \right) + 4$. Then, with probability  at least $1-\frac{\epsilon}{2 k}$, every subset $\Omega^{\prime \prime}_{(1)}$ of columns of $\Omega^{\prime}_{(1)}$ satisfies 
\begin{eqnarray}\label{proper2}
m_{2}(\Omega^{\prime \prime}_{(1)})  \geq t,
\end{eqnarray}
where $t$ is the number of columns of $\Omega^{\prime \prime}_{(1)}$ and $m_{2}(\Omega^{\prime \prime}_{(1)})$ is the number of nonzero rows of $\Omega^{\prime \prime}_{(1)}$.
\end{lemma}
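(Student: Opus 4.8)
The plan is to adapt the proof of Lemma \ref{minsamp} with the minor modifications needed to obtain the stronger inequality \eqref{proper2}, namely $m_{2}(\Omega^{\prime \prime}_{(1)}) \geq t$ rather than $m_{2}(\Omega^{\prime \prime}_{(1)}) - 1 \geq t$. The key structural difference is that here $\Omega^{\prime}_{(1)}$ consists of all $n_1$ columns (not $n_1 - 1$), and we demand that the number of nonzero rows of any column-subset be at least as large as the number of columns, with no slack. Following \cite{charact}, I would let $\mathcal{E}$ denote the bad event that some submatrix $\Omega^{\prime \prime}_{(1)} \in \mathbb{R}^{n_1 \times t}$ of $\Omega^{\prime}_{(1)}$ violates \eqref{proper2}, i.e. has strictly fewer than $t$ nonzero rows, and bound $P(\mathcal{E})$ by a union bound over the choice of the $t$ columns and the at most $t-1$ nonzero rows.

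First I would rewrite the hypothesis $l > 6\log(n_1) + 2\log(2k/\epsilon) + 4$ into the two working forms analogous to \eqref{ineqrefl2} and \eqref{ineqrefl1}, now carrying $\frac{2k}{\epsilon}$ in place of $\frac{k}{\epsilon}$; this shift is exactly what compensates for the target probability $1 - \frac{\epsilon}{2k}$ and for the absence of the $-1$ slack. Then I would reproduce the union-bound estimate in the spirit of \eqref{ineq}, with the combinatorial factors $\binom{n_1}{n}^2$ and tail terms $\left(\frac{n}{n_1}\right)^{ln_1}$ and $\left(\frac{n_1-n}{n_1}\right)^{l(n_1-n)}$, where $n$ indexes the number of nonzero rows in the offending submatrix. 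The bound $\binom{s}{q} \leq \left(\frac{se}{q}\right)^{q}$ from \eqref{ineqproofup} is applied term by term, and each of the two summations is shown to be at most $\frac{\epsilon}{4k}$, so that $P(\mathcal{E}) < \frac{\epsilon}{2k}$, giving the claimed high-probability guarantee.

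The main obstacle—though it is really a matter of careful bookkeeping rather than a genuine difficulty—is tracking how removing the $-1$ slack changes the counting. In Lemma \ref{minsamp} the event was that some $t$-column submatrix has $m_2 \leq t$, i.e. at most $t$ nonzero rows, whereas here the event is $m_2 \leq t - 1$, i.e. at most $t-1$ nonzero rows. This tightening of the threshold by one is precisely what forces the extra factor of $2$ inside the logarithm in the hypothesis on $l$ and what lets the full set of $n_1$ columns (rather than $n_1 - 1$) be handled. I would verify that the two summation bounds still go through with the strengthened lower bound on $l$, confirming that each summation is controlled by $\frac{\epsilon}{4k}$ via the rewritten inequalities; the genericity and martingale machinery are untouched, so the only care required is in the exponents of the union bound.
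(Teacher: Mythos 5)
Your proposal follows the same basic route as the paper: both come down to the union-bound inequalities imported from \cite{charact} together with the term-wise estimates already carried out in the proof of Lemma \ref{minsamp}, with $k$ replaced by $2k$ so that each of the two summations is below $\frac{\epsilon}{4k}$ and the total failure probability is below $\frac{\epsilon}{2k}$. That arithmetic is correct. The paper's own proof is even shorter: it cites the inequalities of \cite{charact} with $r=0$, which yields \eqref{ineqsimilar}, whose tail exponents are $l(n_1+1)$ and $l(n_1-n+1)$, then observes that since the bases $\frac{n}{n_1}$ and $\frac{n_1-n}{n_1}$ are less than one, these larger exponents make the right-hand side of \eqref{ineqsimilar} smaller than the right-hand side of \eqref{ineq}, so the bound already established in Lemma \ref{minsamp} (with $k$ replaced by $2k$) applies verbatim.

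There is one step in your write-up that is not justified as stated. You claim that the union bound for the present event, taken over the $t$ columns and the at most $t-1$ nonzero rows, produces the tail terms $\left(\frac{n}{n_1}\right)^{ln_1}$ and $\left(\frac{n_1-n}{n_1}\right)^{l(n_1-n)}$; but those are the $r=1$ exponents of \eqref{ineq}, appropriate for the event ``$t$ columns inside at most $t$ rows'' on $n_1-1$ columns. The event here (``$t$ columns inside at most $t-1$ rows,'' drawn from $n_1$ columns) is the $r=0$ instance, whose bound is \eqref{ineqsimilar} with exponents larger by $l$. Your expression is still a valid upper bound on $P(\mathcal{E})$, but only because of the domination ${\rm RHS}\,\eqref{ineqsimilar} \le {\rm RHS}\,\eqref{ineq}$ --- which is precisely the one substantive observation in the paper's proof and is absent from yours. (An alternative way to close this gap without touching exponents: if some $t$ columns of $\Omega^{\prime}_{(1)}$ have at most $t-1$ nonzero rows, then after deleting any one fixed column of $\Omega^{\prime}_{(1)}$ the remaining $n_1-1$ columns still contain a subset violating \eqref{proper1}, so Lemma \ref{minsamp} with $k$ replaced by $2k$ applies directly.) Separately, your causal account of the constants is backwards: the extra factor of $2$ inside the logarithm compensates for the smaller target failure probability $\frac{\epsilon}{2k}$ rather than $\frac{\epsilon}{k}$, not for the tightened threshold; it is the tightened threshold itself (each offending row set of size $n$ now requires $n+1$ columns rather than $n$) that compensates for having $n_1$ rather than $n_1-1$ columns. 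Neither issue changes your final estimates, but the domination step should be made explicit.
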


\begin{proof}
By setting $r=0$ in inequalities $(12)$ and $(13)$ in \cite{charact}, we have
\begin{eqnarray}\label{ineqsimilar}
P(\mathcal{E}) < \sum_{n=l}^{\frac{n_1}{2}} { n_1\choose{n}}^{2} \left( \frac{n}{n_1} \right) ^{l(n_1+1)} + \sum_{n=1}^{\frac{n_1}{2}} { n_1\choose{n_1-n}}^{2} \left( \frac{n_1-n}{n_1} \right) ^{l(n_1 - n+1)}.
\end{eqnarray}
Since $n < n_1$ it then follows that $P(\mathcal{E}) < {\rm RHS}$ of \eqref{ineqsimilar} $< {\rm RHS}$ of \eqref{ineq} $< \frac{\epsilon}{2k}$, where the last inequality follows from Lemma \ref{minsamp}.
\end{proof}

The next theorem gives a lower bound on the number of needed samples to ensure finite completability for tensor $\mathcal{U}$. More specifically, we show under some mild assumptions if \eqref{minlfinite} holds, all  conditions and assumptions in the statement of Theorem \ref{thm5} hold with high probability, and therefore the tensor is finitely completable.

\begin{theorem}\label{thm6}
Assume that $\sum_{i=2}^{d} \left( n_ir_i \right) < n_{2}\dots n_d$, $\sum_{i=2}^{d} r_i^2 \leq \Pi_{i=2}^{d} r_i$,  $\Pi_{i=2}^{d}n_i \geq n_1  \Pi_{i=2}^{d} r_i - \sum_{i=2}^{d} r_i^2$, and Assumption $B_1$ hold. Furthermore, assume that each column of $\mathbf{U}_{(1)}$ includes at least $l$ observed entries, where
\begin{eqnarray}\label{minlfinite}
l > 6 \ \log \left( {n_1} \right) + 2 \max \left\{ \log \left( \frac{2 \sum_{i=2}^{d} r_i^2}{\epsilon}\right) ,  \log \left( \frac{2\Pi_{i=2}^{d} r_i - 2\sum_{i=2}^{d} r_i^2}{\epsilon} \right) \right\} + 4.
\end{eqnarray}
Then, with probability  at least $1-\epsilon$, there exist only finitely many completions, given the rank components $r_2,\dots,r_d$.
\end{theorem}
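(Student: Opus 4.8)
The plan is to show that, with probability at least $1-\epsilon$, all hypotheses and both conditions of Theorem \ref{thm5} hold for the case $j=1$ treated here; finite completability then follows immediately from Lemma \ref{thm2} and Theorem \ref{thm5}. Assumption $B_1$ is assumed directly. For Assumption $A_1$, note that the bound \eqref{minlfinite} forces $l\geq 1$, so every column of $\mathbf{U}_{(1)}$ carries at least one observed entry; combined with the hypothesis $\sum_{i=2}^{d}(n_ir_i)<n_2\cdots n_d$, Remark \ref{remthm5} then produces the $\sum_{i=2}^{d}(n_ir_i)$ observed entries witnessing Assumption $A_1$. Condition (i) of Theorem \ref{thm5} asks for a subtensor $\mathbf{\breve{\Omega}}'\in\mathbb{R}^{n_1\times n}$ with $n=n_1\Pi_{i=2}^{d}r_i-\sum_{i=2}^{d}r_i^2$ columns; the hypothesis $\Pi_{i=2}^{d}n_i\geq n$ supplies enough distinct mode-$1$ fibers, and the sampling lower bound enough constraint columns, to select such a $\mathbf{\breve{\Omega}}'$.

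Condition (ii) is the crux: inequality \eqref{ineqp} must hold for every subtensor of $\mathbf{\breve{\Omega}}'$. First I would partition the $n$ columns of $\mathbf{\breve{\Omega}}'$ into $\Pi_{i=2}^{d}r_i$ blocks, giving $n_1-1$ columns to each of $\sum_{i=2}^{d}r_i^2$ blocks and $n_1$ columns to each of the remaining $\Pi_{i=2}^{d}r_i-\sum_{i=2}^{d}r_i^2$ blocks (a nonnegative count by the hypothesis $\sum_{i=2}^{d}r_i^2\leq\Pi_{i=2}^{d}r_i$); this totals exactly $(\Pi_{i=2}^{d}r_i-\sum_{i=2}^{d}r_i^2)n_1+(\sum_{i=2}^{d}r_i^2)(n_1-1)=n$ columns, with the smaller blocks aligned to the $\sum_{i=2}^{d}r_i^2$ known entries of the canonical core. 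To each $n_1$-column block I apply Lemma \ref{minsamp2} with $k=\Pi_{i=2}^{d}r_i-\sum_{i=2}^{d}r_i^2$ to secure property \eqref{proper2}, and to each $(n_1-1)$-column block I apply Lemma \ref{minsamp} with $k=2\sum_{i=2}^{d}r_i^2$ to secure property \eqref{proper1}; the two branches of the maximum in \eqref{minlfinite} are exactly the per-block sample thresholds required by these two lemmas. A union bound over the $\Pi_{i=2}^{d}r_i$ blocks then caps the total failure probability at $\epsilon/2+\epsilon/2=\epsilon$.

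With \eqref{proper1} and \eqref{proper2} in hand on $\Omega_{(1)}$, the next step is to deduce \eqref{ineqp} on $\mathbf{\breve{\Omega}}$. For any subtensor $\mathbf{\breve{\Omega}}''$ with $t$ columns and $m=m_{2}(\mathbf{\breve{\Omega}}'')$ nonzero rows, the target $(\Pi_{i=2}^{d}r_i)m-g_{2}(m)\geq t$ involves the concave piecewise-linear $g_{2}$ of \eqref{eqdefgmax}, whose increments never exceed $r_{2}\leq\Pi_{i=2}^{d}r_i$; hence $m\mapsto(\Pi_{i=2}^{d}r_i)m-g_{2}(m)$ is nondecreasing, so it suffices to lower-bound $m$ by $t$ through the block structure and then discount the at most $\sum_{i=2}^{d}r_i^2=g_{2}(\infty)$ known entries that $g_{2}$ can capture. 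Aligning the $-1$ slack of \eqref{proper1} with the known entries makes each of those blocks lose exactly one degree of freedom, so the total loss matches $\sum_{i=2}^{d}r_i^2$ while the $n_1$-column blocks contribute the full $m\geq t$; careful bookkeeping across the breakpoints $m=\sum_{i'=2}^{i}r_{i'}$ of $g_{2}$ then yields \eqref{ineqp}.

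The principal obstacle is the passage from $\Omega_{(1)}$ to the constraint tensor $\mathbf{\breve{\Omega}}$. Each nonzero column of $\mathbf{\breve{\Omega}}$ keeps only the entries used to pin down $\mathbb{T}$ together with a single new observed entry, so a subtensor of $\mathbf{\breve{\Omega}}$ may have strictly fewer nonzero rows than the columns of $\Omega_{(1)}$ from which it is built, and the row counts in \eqref{proper1} and \eqref{proper2} do not transfer verbatim. Overcoming this requires a matching that routes the surplus observed entries to distinct polynomial-columns so that nonzero rows are preserved in the sense demanded by \eqref{ineqp}; this is a deficiency form of Hall's theorem on bipartite graphs, which I would isolate and prove as a separate generalized Hall's theorem and then invoke here. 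Securing that matching, and checking the boundary behavior of $g_{2}$ at its breakpoints, is the delicate part, whereas the dimension counting and the union bound above are comparatively routine once Lemmas \ref{minsamp}, \ref{minsamp2} and Theorem \ref{thm5} are available.
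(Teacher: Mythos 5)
Your proposal is correct and follows essentially the same route as the paper's proof: the same partition of the $n_1\Pi_{i=2}^{d}r_i-\sum_{i=2}^{d}r_i^2$ columns into $\sum_{i=2}^{d}r_i^2$ blocks of $n_1-1$ columns handled by Lemma \ref{minsamp} and $\Pi_{i=2}^{d}r_i-\sum_{i=2}^{d}r_i^2$ blocks of $n_1$ columns handled by Lemma \ref{minsamp2}, the same union bound giving $\epsilon/2+\epsilon/2$, and the same passage from $\Omega_{(1)}$ to the constraint tensor via Lemma \ref{Omega}, whose proof is exactly the generalized Hall's theorem (Theorem \ref{genHall}) that you correctly identify as the key ingredient. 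The only cosmetic difference is in the final bookkeeping, where the paper simply uses the crude global bound $g_{2}(\cdot)\leq\sum_{i=2}^{d}r_i^2$ instead of your breakpoint/monotonicity analysis of $g_{2}$.
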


\begin{proof}
Since each column of  $\Omega_{(1)}$ includes at least one nonzero entry, there exist $\sum_{i=2}^{d} n_ir_i$ entries in different columns such that they satisfy Assumption $A_1$ (as mentioned in Remark \ref{remthm5}). Also, $\Omega_{(1)}$ has $\Pi_{i=2}^{d}n_i$ columns and by assumption $\Pi_{i=2}^{d}n_i \geq n_1  \Pi_{i=2}^{d} r_i - \sum_{i=2}^{d} r_i^2 $, therefore there exist $\Pi_{i=2}^{d} r_i $ disjoint sets $\mathcal{S}_1 , \ldots, \mathcal{S}_{\Pi_{i=2}^{d} r_i}$ such that each $\mathcal{S}_{\ell}$ consists of $n_1 -1$ columns of $\Omega_{(1)}$ for $1 \leq {\ell} \leq  \sum_{i=2}^{d} r_i^2$ and each $\mathcal{S}_{\ell}$ consists of $n_1$ columns of $\Omega_{(1)}$ for $\sum_{i=2}^{d} r_i^2+1 \leq {\ell} \leq \Pi_{i=2}^{d} r_i$.


Note that by \eqref{minlfinite}, we have $l > 6 \ \log \left( {n_1} \right) + 2  \log \left( \frac{2 \sum_{i=2}^{d} r_i^2}{\epsilon}\right) + 4$. Using Lemma \ref{minsamp}, with $k=2 \sum_{i=2}^{d} r_i^2$, for each $1 \leq {\ell}  \leq \sum_{i=2}^{d} r_i^2 $, with probability  at least $1-\frac{\epsilon}{ 2 \sum_{i=2}^{d} r_i^2}$, \eqref{proper1} holds for every subset of columns of $\mathcal{S}_{\ell}$. Hence, with probability  at least $1-\frac{\epsilon}{2}$, \eqref{proper1} holds for every subset of columns of $\mathcal{S}_{\ell}$ for $1 \leq {\ell}  \leq \sum_{i=2}^{d} r_i^2 $, simultaneously. According to Remark \ref{relatclarify} and Lemma \ref{Omega} below and by setting $r=1$, as a subset of columns $\mathcal{S}_{\ell}$ of $\Omega_{(1)}$ satisfies  \eqref{proper1}, there exist a subset of columns $\mathcal{\breve{S}}_{\ell}$ of the constraint matrix $\mathbf{\breve{\Omega}}$ (corresponding columns to the columns of $\mathcal{S}_{\ell}$) that satisfies \eqref{proper1} as well, for $1 \leq {\ell}  \leq \sum_{i=2}^{d} r_i^2 $. Denote  $\mathcal{\breve{S}}^{(0)}= \cup_{\ell=1}^{\sum_{i=2}^{d} r_i^2}  \ \mathcal{\breve{S}}_{\ell}$.

Consider any subset of columns of $\mathcal{\breve{S}}^{(0)}$ and denote it by $\mathcal{\breve{S}}^{\prime}$. Let $\mathcal{\breve{S}}_{\ell}^{\prime} $ denote the columns of $\mathcal{\breve{S}}^{\prime}$ that belong in $\mathcal{\breve{S}}_{\ell}$ and, without loss of generality, assume that $|\mathcal{\breve{S}}_1^{\prime}| \geq \cdots \geq |\mathcal{\breve{S}}_{\sum_{i=2}^{d} r_i^2 }^{\prime}|$, where $|\mathcal{\breve{S}}_{\ell}^{\prime}|$ denotes the number of columns of $\mathcal{\breve{S}}_{\ell}^{\prime}$. As a result, we obtain 
\begin{eqnarray}
|\mathcal{\breve S}^{\prime}| = \sum_{\ell=1}^{\sum_{i=2}^{d} r_i^2} |\mathcal{\breve S}_{\ell}^{\prime}| \leq \left( \sum_{i=2}^{d} r_i^2 \right) |\mathcal{\breve S}_1^{\prime}| \leq \left( \sum_{i=2}^{d} r_i^2 \right) (m_2(\mathcal{\breve S}_1^{\prime})-1) \leq \left( \sum_{i=2}^{d} r_i^2 \right) (m_2(\mathcal{\breve S}^{\prime})-1),
\end{eqnarray}
and consequently
\begin{eqnarray}\label{setonein}
|\mathcal{\breve S}^{\prime}|  \leq \left( \sum_{i=2}^{d} r_i^2 \right) m_2(\mathcal{\breve S}^{\prime}) - \sum_{i=2}^{d} r_i^2, \ \quad \quad  \forall \ \mathcal{\breve S}^{\prime} \subseteq \mathcal{\breve{S}}^{(0)}.
\end{eqnarray}

Moreover, by \eqref{minlfinite}, we have  $l > 6 \ \log \left( {n_1} \right) + 2  \log \left( \frac{2\Pi_{i=2}^{d} r_i - 2\sum_{i=2}^{d} r_i^2}{\epsilon} \right) + 4$. Using Lemma \ref{minsamp2}, with $k=2\Pi_{i=2}^{d} r_i-2\sum_{i=2}^{d} r_i^2$, for each $ \sum_{i=2}^{d} r_i^2 + 1 \leq \ell  \leq \Pi_{i=2}^{d} r_i $, with probability  at least $1-\frac{\epsilon}{ 2\Pi_{i=2}^{d} r_i-2\sum_{i=2}^{d} r_i^2}$, \eqref{proper2} holds for every subset of columns of $\mathcal{ S}_{\ell}$. As a result, with probability  at least $1-\frac{\epsilon}{2}$, \eqref{proper2} holds for every subset of columns of $\mathcal{ S}_{\ell}$ for $\sum_{i=2}^{d} r_i^2 + 1 \leq {\ell}  \leq \Pi_{i=2}^{d} r_i$, simultaneously. According to Remark \ref{relatclarify} and Lemma \ref{Omega} below and by setting $r=0$, as a subset of columns $\mathcal{S}_{\ell}$ of $\Omega_{(1)}$ satisfies  \eqref{proper2}, the corresponding subset of columns $\mathcal{\breve S}_{\ell}$ of the constraint matrix $\mathbf{\breve{\Omega}}$  satisfies \eqref{proper2} as well, $\sum_{i=2}^{d} r_i^2 + 1 \leq {\ell}  \leq \Pi_{i=2}^{d} r_i$. Denote  $\mathcal{\breve{S}}^{(1)}= \cup_{\ell=\sum_{i=2}^{d} r_i^2 + 1}^{\Pi_{i=2}^{d} r_i}  \ \mathcal{\breve{S}}_{\ell}$. Similarly, we have
\begin{eqnarray}\label{settwoin}
|\mathcal{\breve S}^{\prime}|  \leq \left( \Pi_{i=2}^{d} r_i - \sum_{i=2}^{d} r_i^2 \right) m_2(\mathcal{\breve S}^{\prime}), \ \quad \quad  \forall \ \mathcal{\breve S}^{\prime} \subseteq \mathcal{\breve{S}}^{(1)}.
\end{eqnarray}

For any subset of columns $\mathbf{\breve{\Omega}}^{\prime\prime}$ of $\mathbf{\breve{\Omega}}^{\prime} = \mathcal{\breve S}^{(0)} \cup \mathcal{\breve S}^{(1)}$ we define $\mathcal{\breve S}^{(0)^{\prime}}$ and $\mathcal{\breve S}^{(1)^{\prime}}$ as the set of columns of $\mathbf{\breve{\Omega}}^{\prime \prime}$ that belong to $\mathcal{\breve S}^{(0)}$ and $\mathcal{\breve S}^{(1)}$, respectively. Observe that as $\mathbf{\breve{\Omega}}^{\prime}$ has exactly  $ \sum_{i=1}^{\Pi_{i=2}^{d} r_i} |\mathcal{S}_i| = n_1  \Pi_{i=2}^{d} r_i - \sum_{i=2}^{d} r_i^2$ columns, condition (i) of Theorem \ref{thm5} for $j=1$ is satisfied. Then, by \eqref{setonein}, \eqref{settwoin} and the assumption that $\sum_{i=2}^{d} r_i^2 \leq \Pi_{i=2}^{d} r_i$, we have (recall $|\mathbf{\breve{\Omega}}^{\prime \prime}|$ denotes the number of columns of $\mathbf{\breve{\Omega}}^{\prime \prime}$ here)
\begin{align}
 |\mathbf{\breve{\Omega}}^{\prime \prime}| =& |\mathcal{\breve S}^{(0)^{\prime}}| + |\mathcal{\breve S}^{(1)^{\prime}}| \leq \left( \sum_{i=2}^{d} r_i^2 \right) m_2(\mathcal{\breve S}^{(0)^{\prime}})-\sum_{i=2}^{d} r_i^2 + \left( \Pi_{i=2}^{d} r_i - \sum_{i=2}^{d} r_i^2 \right) m_2(\mathcal{\breve S}^{(1)^{\prime}}) \\ \nonumber
 \leq& \left( \Pi_{i=2}^{d} r_i  \right) m_2(\mathbf{\breve{\Omega}}^{\prime \prime}) -\sum_{i=2}^{d} r_i^2 \leq \left( \Pi_{i=2}^{d} r_i  \right) m_2(\mathbf{\breve{\Omega}}^{\prime \prime}) -g_{2}(m_2(\mathbf{\breve{\Omega}}^{\prime \prime})). 
\end{align}
Therefore, any subset of columns of $\mathbf{\breve{\Omega}}^{\prime}$ satisfies \eqref{ineqp} (condition (ii) of Theorem \ref{thm5} for $j=1$), with probability  at least $1- \epsilon$. Then, according to Theorem \ref{thm5}, with probability  at least $1- \epsilon$, there are only finitely many completions that fit in the sampled tensor $\mathcal{U}$, given the rank components $r_2,\dots,r_d$.
\end{proof}

\begin{remark}
{\rm In the case that only a subset of rank components are given, e.g, $r_{j+1},\dots,r_d$, we can treat the first $j$ dimensions of the tensor as one single dimension, and therefore the above result is still applicable. In particular, assume that each column of $\widetilde{\mathbf{U}}_{(j)}$ ($j$-th unfolding) includes at least $l$ observed entries, where (recall $N_j=n_1\dots n_j$)
\begin{eqnarray}\label{minlfinitez}
l > 6 \ \log \left( {N_j} \right) + 2 \max \left\{ \log \left( \frac{2 \sum_{i=j+1}^{d} r_i^2}{\epsilon}\right) ,  \log \left( \frac{2\Pi_{i=j+1}^{d} r_i - 2\sum_{i=j+1}^{d} r_i^2}{\epsilon} \right) \right\} + 4.
\end{eqnarray}
Given that $\sum_{i=j+1}^{d} r_i^2 \leq \Pi_{i=j+1}^{d} r_i$,  $\Pi_{i=j+1}^{d}n_i \geq N_j  \Pi_{i=j+1}^{d} r_i - \sum_{i=j+1}^{d} r_i^2$, and Assumption $B_j$ hold, with probability  at least $1-\epsilon$, there exist only finitely many completions, given the rank components $r_{j+1},\dots,r_d$.} {  \hfill  \qedsymbol}
\end{remark}

\begin{remark}
{\rm Assume that there exist only finitely many completions, given the rank components $r_{j+1},\dots,r_d$, where $j \geq 1$. Then, there exist only finitely many completions of rank $(r_1,\dots,r_d)$.} {  \hfill  \qedsymbol}
\end{remark}

As in the proof of Theorem \ref{thm6} we referred to Lemma \ref{Omega}, we need to show that if a subset of columns of $\Omega_{(1)}$ satisfies \eqref{proper1} or \eqref{proper2}, the same property holds for a corresponding subset of columns of the constraint matrix $\mathbf{\breve{\Omega}}$.

\begin{remark}\label{relatclarify}
{\rm Recall that for each column of $\Omega_{(1)}$ that has $k+r$ nonzero entries and $r$ of them have been used to obtain $\mathbb{T}$, there exist $k$ columns in the constraint matrix $\mathbf{\breve{\Omega}}$, each with exactly $r+1$ nonzero entries.

Recall that as we showed after stating Assumption $A_j$, we can choose $\sum_{i=2}^{d} n_ir_i$ nonzero entries of $\Omega_{(1)}$ to obtain $\mathbb{T}$, i.e., to satisfy Assumption $A_1$ such that in each column of $\Omega_{(1)}$ either we choose one or zero nonzero entry. In that case, each column of $\mathbf{\breve{\Omega}}$ includes $0+1$ or $1+1$ nonzero entries. As a result, in the proof of Theorem \ref{thm6}, we only need Lemma \ref{Omega} for $r=0$ and $r=1$. However the general statement is needed to complete a proof in \cite{charact}, as we explain in Remark \ref{missingpart}.

Consequently, the matrix $\mathbf{\breve{\Omega}}^{\prime}$ defined in Lemma \ref{Omega} for $r=0$ and $r=1$ corresponds to the columns of the constraint matrix $\mathbf{\breve{\Omega}}$. } {  \hfill  \qedsymbol}
\end{remark}

\begin{lemma}\label{Omega}
Let $r$ be a given nonnegative integer. Assume that there exists an $n_1 \times (n_1 -r)$ matrix $\Omega^{\prime}$ composed  of $n_1-r$ columns of $\Omega_{(1)}$ such that each column of $\Omega^{\prime}$ includes at least $r+1$ nonzero entries and satisfies the following property:{\footnote{This property is exactly property (ii) in \cite{charact}.}}
\begin{itemize}
\item Denote an $n_1 \times t$  matrix (for any $1 \leq t \leq n_1-r$) composed of any $t$ columns of $\Omega^{\prime}$ by $\Omega^{\prime \prime}$. Then 
\begin{eqnarray}\label{proper233}
m_{2}(\Omega^{\prime \prime}) -r  \geq t,
\end{eqnarray}
where  $m_{2}(\Omega^{\prime \prime})$ is the number of nonzero rows of $\Omega^{\prime \prime}$.
\end{itemize}
Then, there exists an $n_1 \times (n_1 -r)$ matrix $\mathbf{\breve{\Omega}}^{\prime}$ such that: each column has exactly $r+1$ entries equal to one, and if $\mathbf{\breve{\Omega}}^{\prime}(x,y)=1$ then we have $\Omega^{\prime}(x,y)=1$. Moreover, $\mathbf{\breve{\Omega}}^{\prime}$ satisfies the above-mentioned property.
\end{lemma}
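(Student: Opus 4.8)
The plan is to recast the statement as a selection problem on a bipartite graph and then to invoke a surplus version of Hall's theorem. First I would build the bipartite graph $G$ whose left vertices $X$ are the $n_1-r$ columns of $\Omega^{\prime}$, whose right vertices are the $n_1$ rows, and in which a column $c$ is joined to a row $x$ precisely when $\Omega^{\prime}(x,c)=1$. For any set $S$ of columns, the number of nonzero rows of the corresponding submatrix is exactly $|N(S)|$, the size of the neighborhood of $S$ in $G$. Hence the hypothesized property \eqref{proper233}, namely $m_{2}(\Omega^{\prime\prime})-r\geq t$ for every choice of $t$ columns, is equivalent to the surplus-Hall condition
\begin{equation*}
|N(S)| \geq |S| + r \qquad \text{for every } S \subseteq X,
\end{equation*}
and the desired conclusion is that one can pick, for each column $c$, a subset $A(c)\subseteq N(c)$ with $|A(c)|=r+1$ so that the thinned graph still satisfies $\left|\bigcup_{c\in S}A(c)\right|\geq |S|+r$ for all $S$.

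The core of the argument is therefore a generalization of Hall's theorem guaranteeing such a bounded-degree selection that preserves the surplus, which is exactly the role played by Theorem \ref{genHall}. Granting that theorem, the lemma follows directly: set $\mathbf{\breve{\Omega}}^{\prime}(x,c)=1$ iff $x\in A(c)$, and $\mathbf{\breve{\Omega}}^{\prime}(x,c)=0$ otherwise. Then each column of $\mathbf{\breve{\Omega}}^{\prime}$ has exactly $r+1$ ones because $|A(c)|=r+1$; the dominance $\mathbf{\breve{\Omega}}^{\prime}(x,c)=1 \Rightarrow \Omega^{\prime}(x,c)=1$ holds because $A(c)\subseteq N(c)$; and for any $t$ columns $S$, the number of nonzero rows of the corresponding submatrix of $\mathbf{\breve{\Omega}}^{\prime}$ equals $\left|\bigcup_{c\in S}A(c)\right|\geq |S|+r = t+r$, which is precisely property \eqref{proper233} for $\mathbf{\breve{\Omega}}^{\prime}$. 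Thus the entire lemma reduces to the bipartite selection statement.

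The main obstacle is establishing that generalized Hall's theorem. I would prove it by induction on the number of columns $|X|$, distinguishing two cases according to whether some nonempty proper subset of columns is \emph{tight}, i.e.\ attains $|N(S)|=|S|+r$ with equality. If no proper subset is tight, every proper subset enjoys strict surplus $|N(S)|\geq|S|+r+1$, so one may fix a single column, assign it any $r+1$ of its neighbors, delete it, and check that the remaining graph still satisfies the surplus condition with the same $r$, closing the induction. If a tight set $S_0$ exists, I would split along $S_0$: the subgraph on $S_0$ together with $N(S_0)$ inherits the surplus condition, while the graph obtained by removing $S_0$ and $N(S_0)$ satisfies the condition with the same $r$ on the remaining columns; assembling the two selections yields the required $A(\cdot)$. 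The delicate points, where the proof of Theorem \ref{genHall} does its real work, are verifying that the surplus parameter $r$ is preserved under both the deletion step and the tight-set splitting, and that the union bound $\left|\bigcup_{c\in S}A(c)\right|\geq|S|+r$ does not degrade when $S$ straddles the two parts of the decomposition.
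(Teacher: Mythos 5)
Your reduction of the lemma to a bipartite selection problem is exactly the paper's own proof: the paper constructs the same graph (columns of $\Omega^{\prime}$ on one side, the $n_1$ rows on the other, edges at the nonzero entries), observes that \eqref{proper233} is equivalent to the surplus condition $|N(\mathcal{S})| \geq |\mathcal{S}| + r$, settles $r=0$ by Hall's theorem, and for $r>0$ invokes Theorem \ref{genHall}. Up to the point where you write ``granting that theorem, the lemma follows directly,'' your argument is correct and identical in substance to the paper's.

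The genuine gap is in your sketch of Theorem \ref{genHall} itself, in the tight-set case. You claim that the graph obtained by removing a tight set $S_0$ \emph{together with} $N(S_0)$ ``satisfies the condition with the same $r$ on the remaining columns.'' It does not. For $S$ disjoint from $S_0$, tightness only yields $|N(S)\setminus N(S_0)| \geq |S|+|S_0|+r-|N(S_0)| = |S|$, i.e.\ the surplus-zero Hall condition, not surplus $r$. Worse, a residual column need not even retain $r+1$ neighbors outside $N(S_0)$: take $r=1$, two columns with $N(c_1)=\{\rho_1,\rho_2\}$ (so $\{c_1\}$ is tight) and $N(c_2)=\{\rho_1,\rho_2,\rho_3\}$; every surplus condition holds, yet after deleting $N(c_1)$ the column $c_2$ has a single neighbor left, so no degree-$(r+1)$ selection exists inside your residual graph, even though the theorem's conclusion does hold for the full graph (select $\{\rho_1,\rho_2\}$ for $c_1$ and $\{\rho_1,\rho_3\}$ for $c_2$). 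This is precisely why the paper does something different in Scenario 1 of its proof: it keeps \emph{all} rows of $\mathcal{T}_2$ available to the residual columns, so that the surplus-$r$ induction hypothesis still applies to them, and it strengthens the induction via Lemma \ref{match}, which produces a degree-$(r+1)$, surplus-$r$ selection that in addition contains a perfect matching of $\mathcal{T}_1\setminus S_0$ into $\mathcal{T}_2\setminus N(S_0)$. That embedded matching is what restores the bound for straddling sets, giving $|N(\mathcal{S}'\cup\mathcal{S}'')| \geq (|\mathcal{S}'|+r)+|\mathcal{S}''|$. You correctly flag the straddling issue as the delicate point, but the decomposition you propose cannot be repaired without such an extra matching device; as written, the induction does not close.
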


%
%

\begin{proof}
Let $\Omega^{\prime}$ denote a $n_1 \times (n_1-r)$ matrix that consists of $n_1-r$ columns of $\Omega_{(1)}$ and satisfies the mentioned property in the statement of the lemma. Note that the given property in the statement of the lemma is equivalent to the following statement
\begin{itemize}
\item The matrix obtained by choosing any subset of the columns of $\Omega^{\prime}$ includes at least $r+t$ nonzero rows, where $t$ is the number of the selected columns of $\Omega^{\prime}$.
\end{itemize}

Consider a bipartite graph $\mathcal{G}$ with the two sets of nodes $\mathcal{T}_1$ with $n_1-r$ nodes corresponding to the columns and $\mathcal{T}_2$ with $n_1$ nodes corresponding to the rows. We add an edge between the $i$-th node of $\mathcal{T}_1$ and the $i^{\prime}$-th node of $\mathcal{T}_2$ if and only if the $(i,i^{\prime})$-th entry of $\Omega^{\prime}$ is equal to one. For a set of nodes $\mathcal{S}$, let $N(\mathcal{S})$ denote the set of their neighbors. Note that according to the assumption, any subset of the columns of $\Omega^{\prime}$ includes at least $r+t$ nonzero rows ($r+t$ edges in the defined graph), where $t$ is the number of the selected columns of $\Omega_{\tau}$. Therefore, every subset of nodes of $\mathcal{T}_1$ has the following property
\begin{eqnarray}\label{Hall}
N(\mathcal{S})  \geq  |\mathcal{S}| + r.
\end{eqnarray}
The statement of the lemma is equivalent to proving that there exists a spanning subgraph $\mathcal{G}^{\prime}$ of $\mathcal{G}$ such that every node of $\mathcal{T}_1$ is of degree $r+1$, and also for each subset of nodes of $\mathcal{T}_1$,  \eqref{Hall} holds in $\mathcal{G}^{\prime}$.

First consider the scenario that $r=0$. Then, according to Hall's theorem \cite{kierstead1983effective}, there is a perfect matching (since inequality \eqref{Hall} holds). Observe that a perfect matching in the described graph is exactly equivalent to what we are looking for when $r=0$. For other values of $r > 0$, the proof is a direct result of Theorem \ref{genHall} which is a {\bf generalization} of the {\bf Hall's theorem}. An example of such spanning subgraph for a graph with $n_1=6$ and $r=1$ is shown in Figure \ref{fig5}. In Figure \ref{fig:2.1} a bipartite graph is given such that \eqref{Hall} holds for $r=1$. Figure \ref{fig:2.2} gives an spanning subgraph of the given graph in Figure \ref{fig:2.1} such that degree of each node in $\mathcal{T}_1$ is equal to $2$ and still \eqref{Hall} holds for $r=1$.
\end{proof}

\begin{figure}[htbp]
\centering
\subfigure[The original graph.]{
	\includegraphics[width=5cm]{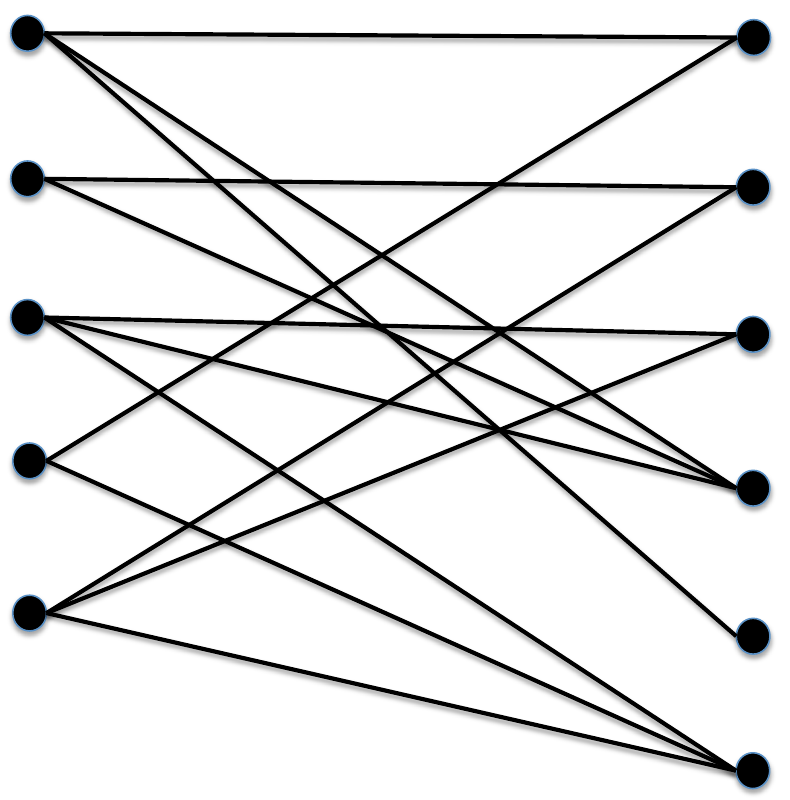}
    \label{fig:2.1}
}
\subfigure[The spanning subgraph.]{
	\includegraphics[width=5cm]{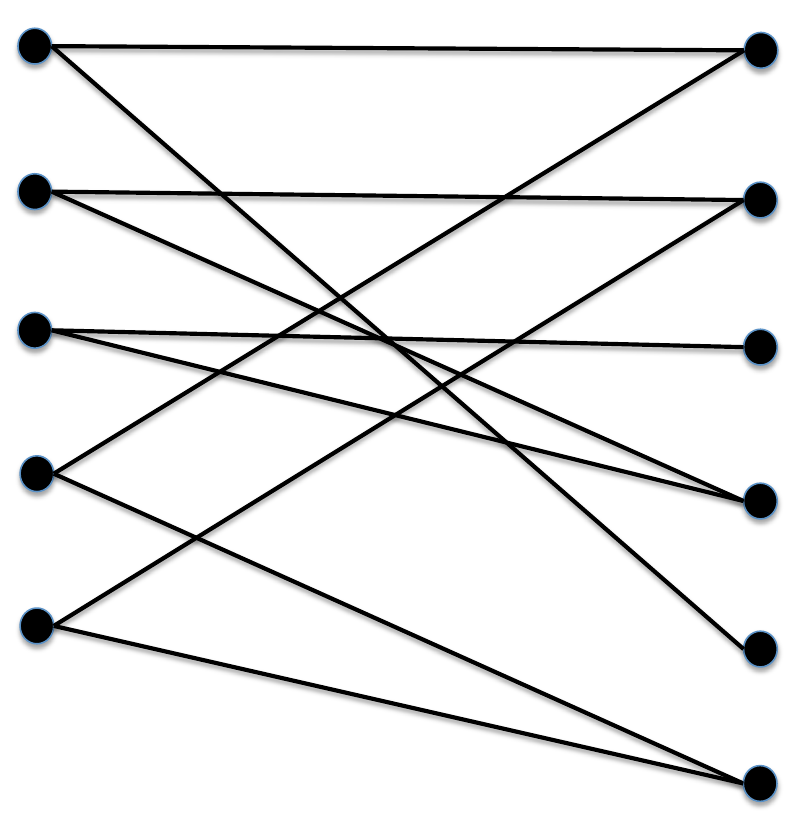}
    \label{fig:2.2}
}
\caption[Optional caption for list of figures]{An example of the corresponding bipartite graph with $r=1$ and $n_1=6$.}
\label{fig5}
\end{figure}

\begin{theorem}\label{genHall}
{(Generalized Hall's Theorem)} Consider a bipartite graph $\mathcal{G}$ with two sets of nodes, $\mathcal{T}_1$ with $x$ nodes and $\mathcal{T}_2$ with $x+r$ nodes, where $x, r \in \mathbb{N}$. Suppose that for each subset $\mathcal{S}$ of $\mathcal{T}_1$ the following inequality holds
\begin{eqnarray}\label{neigh1}
|N(\mathcal{S})|  \geq  |\mathcal{S}| + r.
\end{eqnarray}
Then, there exists a spanning subgraph $\mathcal{G}^{\prime}$ of $\mathcal{G}$ such that every node of $\mathcal{T}_1$ is of degree $r+1$, and also for each subset of nodes of $\mathcal{T}_1$, the inequality \eqref{neigh1} holds in $\mathcal{G}^{\prime}$ as well.
\end{theorem}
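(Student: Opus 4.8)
The plan is to start from the full graph $\mathcal{G}$ and repeatedly delete edges, lowering the degrees of the nodes in $\mathcal{T}_1$ down to exactly $r+1$ while never destroying the expansion property \eqref{neigh1}. First observe that applying \eqref{neigh1} to a singleton $\mathcal{S}=\{v\}$ gives $\deg(v)=|N(\{v\})|\geq r+1$ for every $v\in\mathcal{T}_1$, so in $\mathcal{G}$ every node of $\mathcal{T}_1$ already has degree at least $r+1$. Hence it suffices to show that whenever some $v\in\mathcal{T}_1$ has $\deg(v)\geq r+2$, one can remove a single edge incident to $v$ so that \eqref{neigh1} continues to hold; iterating this deletion, which strictly decreases the edge count and never pushes a degree below $r+1$, terminates in the desired spanning subgraph $\mathcal{G}^{\prime}$ with all left-degrees equal to $r+1$.

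To make the deletion step precise, call a subset $\mathcal{S}\subseteq\mathcal{T}_1$ \emph{tight} if $|N(\mathcal{S})|=|\mathcal{S}|+r$, and note that deleting the edge $(v,w)$ violates \eqref{neigh1} exactly when there is a tight set $\mathcal{S}\ni v$ in which $v$ is the only neighbor of $w$; I will call such an edge \emph{unsafe}, and any other edge at $v$ \emph{safe}. The key structural tool is that the set function $f(\mathcal{S})=|N(\mathcal{S})|-|\mathcal{S}|$ is submodular, because $\mathcal{S}\mapsto|N(\mathcal{S})|$ is submodular and $|\mathcal{S}|$ is modular, so the tight sets, being the minimizers of $f$, are closed under intersection and union. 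In particular the tight sets containing a fixed $v$ have a unique minimal element $\mathcal{S}_0$, and every tight set through $v$ contains $\mathcal{S}_0$.

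The heart of the argument, and the step I expect to be the main obstacle, is to show that not all edges at $v$ can be unsafe when $\deg(v)\geq r+2$. Suppose, for contradiction, that they all were. Since $\mathcal{S}_0$ is contained in every tight set through $v$, each neighbor $w$ of $v$ would then have $v$ as its only neighbor inside $\mathcal{S}_0$. Consequently $N(\{v\})$ and $N(\mathcal{S}_0\setminus\{v\})$ are disjoint, and since $\{v\}$ is not tight (as $f(\{v\})=\deg(v)-1\geq r+1>r$) the set $\mathcal{S}_0\setminus\{v\}$ is nonempty. Writing $N(\mathcal{S}_0)$ as the disjoint union $N(\mathcal{S}_0\setminus\{v\})\cup N(\{v\})$ and applying \eqref{neigh1} to $\mathcal{S}_0\setminus\{v\}$ gives
\begin{eqnarray}
|N(\mathcal{S}_0)|=|N(\mathcal{S}_0\setminus\{v\})|+\deg(v)\geq (|\mathcal{S}_0|-1+r)+(r+2)=|\mathcal{S}_0|+2r+1.
\end{eqnarray}
But $\mathcal{S}_0$ is tight, so $|N(\mathcal{S}_0)|=|\mathcal{S}_0|+r$, forcing $r\leq -1$, a contradiction. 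Hence a safe edge at $v$ exists.

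With this lemma in hand the construction is immediate: starting from $\mathcal{G}$, repeatedly locate a node of degree at least $r+2$ and delete a safe edge at it; each deletion preserves \eqref{neigh1} by the definition of safe and keeps every left-degree at least $r+1$, and the process must stop because the number of edges strictly decreases at each step. The terminal graph is a spanning subgraph $\mathcal{G}^{\prime}$ in which every node of $\mathcal{T}_1$ has degree exactly $r+1$ and \eqref{neigh1} still holds, which is precisely the claim. The only points needing careful verification are the submodularity and closure facts for the tight sets and the characterization of unsafe edges; the remainder is bookkeeping, and the special case $r=0$ reduces to the perfect matching guaranteed by the classical Hall's theorem.
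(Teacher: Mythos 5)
Your proof is correct, and it takes a genuinely different route from the paper's. The paper proves Theorem \ref{genHall} by strong induction on $x$, splitting on whether some proper nonempty $\mathcal{S}_1\subseteq\mathcal{T}_1$ is tight, i.e. $|N(\mathcal{S}_1)|=|\mathcal{S}_1|+r$: if so, it cuts $\mathcal{G}$ into the induced subgraph on $\mathcal{S}_1\cup N(\mathcal{S}_1)$ and the induced subgraph on the complement, applies the induction hypothesis to each piece, and glues them; the gluing requires the companion Lemma \ref{match} (proved by a separate induction) to guarantee that the second piece can be chosen to contain a perfect matching of $\mathcal{T}_1\setminus\mathcal{S}_1$ into $\mathcal{T}_2\setminus N(\mathcal{S}_1)$, which is what keeps the union expanding. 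If no proper tight set exists, the paper deletes one vertex, inducts, and restores $r+1$ arbitrary edges. You avoid vertex induction and the auxiliary matching lemma entirely: you run a greedy edge-deletion scheme whose whole difficulty sits in the ``safe edge'' lemma, proved by uncrossing --- the surplus $f(\mathcal{S})=|N(\mathcal{S})|-|\mathcal{S}|$ is submodular, tight sets through a fixed $v$ are closed under intersection and hence have a minimal element $\mathcal{S}_0$, and if every edge at a vertex of degree at least $r+2$ were unsafe, then $N(\{v\})$ and $N(\mathcal{S}_0\setminus\{v\})$ would be disjoint, contradicting tightness of $\mathcal{S}_0$. What each approach buys: yours is self-contained (classical Hall's theorem for $r=0$ falls out as a byproduct rather than being invoked), uniform in $r$, and algorithmic, being the standard uncrossing technique from submodular optimization; the paper's decomposition uses only elementary counting and, along the way, establishes the matching refinement of Lemma \ref{match}, which parallels how the theorem is consumed in Lemma \ref{Omega}. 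Two fine points you should tighten, neither a genuine gap: (a) tight sets are \emph{not} closed under union and intersection in general, since the empty set has surplus $0<r$ and the uncrossing inequality $f(A\cup B)+f(A\cap B)\leq f(A)+f(B)$ only forces both sides to be tight when $A\cap B\neq\emptyset$; you use closure only for tight sets sharing the vertex $v$, where it is valid, but the blanket claim should be qualified; (b) the minimal tight set $\mathcal{S}_0$ exists only if at least one tight set through $v$ exists --- this does follow from your contradiction hypothesis that all edges at $v$ are unsafe (an unsafe edge requires a tight set through $v$), but it deserves an explicit sentence, since when no tight set contains $v$ every edge at $v$ is trivially safe and the deletion step goes through immediately.
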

\begin{proof}
The proof is given in Appendix \ref{appB}.
\end{proof}

\begin{corollary}
Assume that $\sum_{i=j+1}^{d} r_i^2 \leq \Pi_{i=j+1}^{d} r_i$,  $\Pi_{i=j+1}^{d}n_i \geq N_j  \Pi_{i=j+1}^{d} r_i - \sum_{i=j+1}^{d} r_i^2$, and Assumption $B_j$ hold. Furthermore, assume that we observe each entry of  $\mathcal{U}$ with probability  $p$, where
\begin{eqnarray}\label{comp2}
p >  \frac{1}{N_j} \left( 6 \ \log \left( {N_j} \right) + 2 \log \left(  \max \left\{ \frac{2 \sum_{i=j+1}^{d} r_i^2}{\epsilon} ,   \frac{2\Pi_{i=j+1}^{d} r_i - 2\sum_{i=j+1}^{d} r_i^2}{\epsilon} \right\} \right) + 4  \right)+ \frac{1}{\sqrt[4]{N_j}}.
\end{eqnarray}
Then, with probability  at least $1-2\epsilon$, there are only finitely many completions for $\mathcal{U}$, given the rank components $r_{j+1},\dots,r_d$.
\end{corollary}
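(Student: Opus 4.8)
The plan is to reduce the Corollary to Theorem \ref{thm6} --- more precisely, to the version of Theorem \ref{thm6} for a general split $j$ stated in the Remark immediately following it, whose only non-structural hypothesis is that every column of the $j$-th unfolding $\widetilde{\mathbf{U}}_{(j)}$ contains at least $l$ observed entries, with $l$ the right-hand side of \eqref{minlfinitez}. The three structural hypotheses $\sum_{i=j+1}^{d} r_i^2 \leq \Pi_{i=j+1}^{d} r_i$, $\Pi_{i=j+1}^{d}n_i \geq N_j \Pi_{i=j+1}^{d} r_i - \sum_{i=j+1}^{d} r_i^2$, and Assumption $B_j$ are assumed verbatim in the Corollary, so the entire task is to convert the i.i.d.\ Bernoulli$(p)$ sampling hypothesis \eqref{comp2} into this deterministic per-column count, with total failure probability at most $\epsilon$.

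First I would fix one column of $\widetilde{\mathbf{U}}_{(j)}$; it is a length-$N_j$ vector whose entries are observed independently with probability $p$. Applying Lemma \ref{xgfh} with $c = \sqrt[4]{N_j}$ shows that, with probability at least $1 - \exp(-\tfrac{\sqrt{N_j}}{2})$, the column contains at least $\bigl(p - \tfrac{1}{\sqrt[4]{N_j}}\bigr)N_j$ observed entries. The bound \eqref{comp2} is arranged precisely so that this count exceeds $l$: subtracting $\tfrac{1}{\sqrt[4]{N_j}}$ from \eqref{comp2} and multiplying by $N_j$ produces exactly $6\log N_j + 2\log\max\{a,b\} + 4$, which matches the right-hand side of \eqref{minlfinitez} upon using $\log\max\{a,b\} = \max\{\log a,\log b\}$ with $a=\tfrac{2\sum_{i=j+1}^d r_i^2}{\epsilon}$ and $b=\tfrac{2\Pi_{i=j+1}^d r_i-2\sum_{i=j+1}^d r_i^2}{\epsilon}$. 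Thus each individual column meets the threshold with high probability.

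Next I would take a union bound over the $\bar N_j = \Pi_{i=j+1}^{d} n_i$ columns of $\widetilde{\mathbf{U}}_{(j)}$, so that with probability at least $1 - \bar N_j\exp(-\tfrac{\sqrt{N_j}}{2})$ every column simultaneously carries at least $l$ observed entries; in the regime of interest this is at least $1-\epsilon$. On this event the hypotheses of the generalized Theorem \ref{thm6} hold, and since that result needs only a \emph{lower} bound of $l$ samples per column --- and the combinatorial conditions \eqref{proper1}--\eqref{proper2} are monotone, in that adding observed entries can only increase the number of nonzero rows $m_2(\cdot)$ and hence preserve the inequalities --- no delicate conditioning on the exact counts is required: I simply intersect the two high-probability events. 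Theorem \ref{thm6} then delivers finite completability of $\mathcal{U}$ given $r_{j+1},\dots,r_d$ with conditional probability at least $1-\epsilon$, and a final union bound over the two failure events yields the claimed $1-2\epsilon$.

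The main obstacle is the union-bound step: one must check that $\bar N_j\exp(-\tfrac{\sqrt{N_j}}{2}) \le \epsilon$, i.e.\ that $\sqrt{N_j}$ dominates $\log \bar N_j + \log(1/\epsilon)$, which is where the estimate becomes genuinely asymptotic and parallels the ``with probability almost one'' step in the proof of Theorem \ref{thm11}. The remaining points --- matching the algebra between \eqref{comp2} and \eqref{minlfinitez}, and using monotonicity of the combinatorial property to combine the sampling-count event with the Theorem \ref{thm6} event without conditioning --- are routine once this estimate is secured.
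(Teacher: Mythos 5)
Your proposal is correct and takes essentially the same route as the paper, whose entire proof is the one-line reduction ``straightforward given Theorem \ref{thm6} and Corollary \ref{azuma}'': a per-column application of the Azuma-based count bound with $c=\sqrt[4]{N_j}$, the algebraic matching of \eqref{comp2} to \eqref{minlfinitez}, and then an invocation of the generalized Theorem \ref{thm6}. The union-bound caveat you flag (needing $\bar N_j\exp(-\tfrac{\sqrt{N_j}}{2})\le\epsilon$) is a genuine asymptotic gap, but it is equally implicit in the paper's own argument (compare the ``with probability almost one'' step in the proof of Theorem \ref{thm11}), so your write-up is, if anything, more careful than the original.
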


\begin{proof}
The proof is straightforward given Theorem \ref{thm6} and Corollary \ref{azuma}.
\end{proof}

\begin{remark}
{\rm In Theorem \ref{thm11} which is obtained by applying the analysis in \cite{charact}, the number of needed samples per column of the $i$-th matricization to guarantee finite completability with high probability is on the order of $\mathcal{O}(\max\{\log(n_i),r_i\})$. Therefore, the number of needed samples in total is on the order of $\mathcal{O}(N_{-i} \max\{\log(n_i),r_i\})$. On the other hand, in Theorem \ref{thm6} for the case of $d >2$ (order of the tensor is at least $3$), the number of needed samples per column for the $i$-th unfolding becomes $\mathcal{O}(\max\{\log(N_i),\log(r_{i+1}\dots r_d)\})$, and the number of needed samples in total is on the order of $\mathcal{O}(\frac{N}{N_i}  \max\{\log(N_i),\log(r_{i+1}\dots r_d)\})$, if $r_{i+1}\dots r_d < \frac{N}{{N_i}^{2}}$.

As an example, consider the case that $n_i=n$, $i=1,\dots,d$. Also, assume that $r_i=r$, $i=j+1,\dots,d$, where $r \leq \sqrt{n}$. The matricization analysis requires $\mathcal{O}(n^{d-1} \max\{\log(n),r\})$ samples for any $j$. However, according to Theorem \ref{thm6}, if $j=\frac{d}{3}$ it requires $\mathcal{O}(n^{\frac{2d}{3}} \log(n^{\frac{d}{3}}))$ samples for finite completability. Note that under the assumption $r_{j+1}\dots r_d < \frac{N}{{N_j}^{2}}$, i.e., $r^{d-j} < n^{d-2j}$, $j=\frac{d}{3}$ results in the best possible bound as $r = \sqrt{n}$ since increasing $j$ violates the assumption.} {  \hfill  \qedsymbol}
\end{remark}
\begin{remark}\label{missingpart}
{\rm We note that the proof in \cite[Theorem $3$]{charact} is incomplete and Lemma \ref{Omega} is needed to complete that proof.
 
First, observe that sampling pattern matrix $\mathbf{\Omega}$ represents the observed entries and it has at least $l$ nonzero elements per column. On the other hand, $\mathbf{\breve{\Omega}}$ represents the constraint matrix defined in \cite{charact} and it has exactly $(r+1)$ nonzero elements per column, where $r$ is the given rank.

Secondly, observe that in \cite[Lemma $1$]{charact} each $\mathbf{\hat{\Omega}}{\tau}$ is a subset of the constraint matrix $\mathbf{\breve{\Omega}}$ (not a subset of $\mathbf{\Omega}$).

Finally, in the proof and statement of \cite[Lemma $9$]{charact} the authors consider a subset of $\mathbf{\Omega}$. However, in the proof of \cite[Theorem $3$]{charact} they refer to \cite[Lemma $1$]{charact}, which considers the columns as the subsets of $\mathbf{\breve{\Omega}}$. Now, considering $\mathbf{\breve{\Omega}}$ instead of $\mathbf{\Omega}$ in \cite[Lemma $9$]{charact}, the equation below the equation ($11$) in \cite{charact} does not hold. This is because the assumption right below equation ($11$) in \cite{charact} that $\mathbf{\breve{\Omega}}$ has at least $l$ nonzero elements per column is incorrect (this is a property of $\mathbf{\Omega}$ but not necessarily $\mathbf{\breve{\Omega}}$).

As a result, a part of the proof in \cite[Theorem $3$]{charact} is missing which is Lemma \ref{Omega} in this paper.  } {  \hfill  \qedsymbol}
\end{remark}

\section{Deterministic and Probabilistic Guarantees for Unique Completability}\label{sec:uniq}

In this section, we first provide an example with exactly two completions to emphasize that finite completability does not necessarily result in unique completability. Then, we provide the conditions on the sampling pattern to guarantee unique completability.

\begin{example}\label{lemexfinuni}
Assume that the sampled matrix $\mathbf{U} \in \mathbb{R}^{5 \times 4}$ is given as the incomplete matrix on the left below:

\begin{center}
\begin{tabular}{ |c|c|c|c| } 
 \hline
$1$ & \ \ \ \  & $1$ & $-\frac{1}{2}$ \\ \hline
$-4$ & $2$ & $-1$ &  \\ \hline
$0$ & $1$ &  & $2$ \\ \hline
$1$ &  & $4$ &  \\ \hline
 & $4$ & $-2$ & $\frac{3}{2}$ \\
 \hline
\end{tabular} \ $\Rightarrow$ \ 
\begin{tabular}{ |c|c|c|c| } 
 \hline
$1$ & $-\frac{21}{32}$ & $1$ & $-\frac{1}{2}$ \\ \hline
$-4$ & $2$ & $-1$ & $\frac{3}{4}$ \\ \hline
$0$ & $1$ & -$\frac{24}{5}$ & $2$ \\ \hline
$1$ & $-\frac{41}{32}$ & $4$ & $-\frac{7}{4}$ \\ \hline
$-8$ & $4$ & $-2$ & $\frac{3}{2}$ \\
 \hline
\end{tabular} and 
\begin{tabular}{ |c|c|c|c| } 
 \hline
$1$ & $-2$ & $1$ & $-\frac{1}{2}$ \\ \hline
$-4$ & $2$ & $-1$ & $-10$ \\ \hline
$0$ & $1$ & -$\frac{1}{2}$ & $2$ \\ \hline
$1$ & $-8$ & $4$ & $-\frac{25}{2}$ \\ \hline
$-\frac{39}{21}$ & $4$ & $-2$ & $\frac{3}{2}$ \\
 \hline
\end{tabular}
\end{center}

Moreover, assume that $\text{rank}(\mathbf{U}) = 2$. In Appendix \ref{appC}, it is shown that there exist exactly two completions of $\mathbf{U}$ as given by the two complete matrices on the right.
\end{example}

The following assumptions is a stronger version of Assumption $A_j$ to ensure that there exists only one tuple $\mathbb{T}$ given the core tensor.

\noindent{\underline{\it{Assumption $A_j^+$}}}: Anywhere that this assumption is stated, there exist $\sum_{i=j+1}^{d} \left( n_i(r_i+1) \right)$ observed entries such that for any $ \mathcal{S}_i \subseteq \{1,\dots,n_i\}$ for $i \in \{j+1,\dots,d\}$, $\mathcal{U}^{(\mathcal{S}_{j+1},\dots,\mathcal{S}_{d})} $ includes at most $\sum_{i=j+1}^{d} |\mathcal{S}_i|(r_i+1)$ of the mentioned $\sum_{i=j+1}^{d} \left( n_i(r_i+1) \right)$ observed entries.

\begin{remark}
{\rm In Lemma \ref{Ber} we showed that Assumption $A_j$ results that tuple $\mathbb{T}$ can be determined finitely. Note that Assumption $A_j^+$ implies Assumption $A_j$ and therefore using only $\sum_{i=j+1}^{d} \left( n_ir_i \right)$ of the observed entries tuple $\mathbb{T}$ can be determined finitely. Moreover, observe that the rest of the sampled entries result in a set of polynomials that involve any of variables at least once and according to Note $5$, tuple $\mathbb{T}$ can be determined uniquely, with probability one. } {  \hfill  \qedsymbol}
\end{remark}

The following lemma is a re-statement of Lemma 25 in \cite{ashraphijuo4} (also an adaptation of Lemma 7 in \cite{charact}) and gives the conditions under which a subset of entries of the core tensor $\mathcal{C}$ can be determined uniquely.

\begin{lemma}\label{lemma3}
Assume that Assumption $B_j$ holds. Suppose that $\mathbf{\breve{\Omega}}^{\prime} \in \mathbb{R}^{n_1 \times n_2 \times \cdots \times n_j \times t}$ is a subtensor of the constraint tensor such that $\mathcal{P}(\mathbf{\breve{\Omega}}^{\prime})$ is minimally algebraically dependent. Then, for almost every $\mathcal{U}$, all variables that are involved in the set of polynomials $\mathcal{P}(\mathbf{\breve{\Omega}}^{\prime})$ can be uniquely determined.
\end{lemma}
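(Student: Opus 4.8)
The plan is to build directly on Lemma \ref{lemma3new}, which already establishes that a minimally algebraically dependent set $\mathcal{P}(\mathbf{\breve{\Omega}}^{\prime}) = \{p_1,\dots,p_t\}$ involves exactly $t-1$ variables, namely unknown entries of the canonical core tensor $\mathcal{C}$. The content of the present lemma is to strengthen this from a statement about the \emph{number} of involved variables to the claim that all $t-1$ variables are \emph{uniquely} recoverable for almost every $\mathcal{U}$. I would organize the argument in three stages: first show that the common solution set is finite, then identify the real difficulty as passing from finite to unique, and finally discharge the genericity hypothesis.

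First I would use the defining property of minimal algebraic dependence, namely that every proper subset of $\{p_1,\dots,p_t\}$ is algebraically independent. In particular $\{p_1,\dots,p_{t-1}\}$ is algebraically independent, and since by Lemma \ref{lemma3new} the full set involves only $t-1$ variables, these $t-1$ polynomials must already involve all of them. Thus $t-1$ algebraically independent polynomials constrain exactly $t-1$ variables, and by the same dimension-counting reasoning used in the proof of Lemma \ref{thm2} (each algebraically independent constraint lowers the dimension of the solution set by one), the common zero set of $\{p_1=c_1,\dots,p_{t-1}=c_{t-1}\}$ is zero-dimensional, hence finite, for almost every $\mathcal{U}$, where $c_i$ denotes the observed value attached to $p_i$.

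The step I expect to be the main obstacle is upgrading \emph{finitely many} solutions to a \emph{unique} solution, because the remaining polynomial $p_t$ a priori only certifies that the data lies on the variety and does not by itself eliminate the spurious branches. Here I would exploit the structured monomial support that the $p_i$ inherit from \eqref{tuckerj}: after $\mathbb{T}$ is fixed as in Remark \ref{finiteuni}, the entries of $\mathcal{C}$ that appear in each $p_i$ are exactly those dictated by the nonzero pattern of $\mathbf{\breve{\Omega}}^{\prime}$, so the $t$ equations of the circuit share a rigid combinatorial pattern of common variables. Following the scheme of Lemma 7 in \cite{charact} and Lemma 25 in \cite{ashraphijuo4}, I would combine the $t$ equations, by forming the differences and ratios prescribed by this shared-variable pattern, so as to isolate each of the $t-1$ variables as a rational function of the observed values $c_1,\dots,c_t$ alone. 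A variable recovered as a fixed rational function of the data admits no alternative value, which is precisely uniqueness; the delicate point is to verify that the circuit structure, together with this monomial pattern, genuinely permits every variable to be eliminated rather than leaving a residual finite ambiguity (as would occur, for example, if some variable entered all $t$ equations only through an even power).

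Finally I would handle the genericity qualifier. The rational expressions produced in the previous step are valid provided their denominators do not vanish, and these denominators are themselves nonzero polynomials in the entries of $\mathcal{C}$ and $\mathbb{T}$. Since $\mathcal{U}$ is drawn generically with respect to the product measure $\mathbb{P}_0\times\mathbb{P}_1\times\dots\times\mathbb{P}_d$, each such denominator vanishes only on a set of measure zero, and the finite union of these exceptional sets is again measure zero. Consequently, for almost every $\mathcal{U}$ all $t-1$ variables involved in $\mathcal{P}(\mathbf{\breve{\Omega}}^{\prime})$ are determined uniquely, which completes the argument.
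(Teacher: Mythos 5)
Your first stage is sound and matches the paper's opening step: by minimality, any $t-1$ of the polynomials are algebraically independent, and since $t-1$ algebraically independent polynomials must involve at least $t-1$ variables while the whole set involves only $t-1$ of them (Lemma \ref{lemma3new}), the reduced set already involves all of them; by Note $5$ its common solution set is finite. The genuine gap is in your second stage, which is precisely the content of the lemma. You propose to pass from \emph{finite} to \emph{unique} by symbolic elimination --- forming differences and ratios so that each variable is recovered as a rational function of the observed values --- and you yourself flag the unresolved obstruction: nothing guarantees that the circuit's monomial structure permits elimination down to a degree-one (rational) expression rather than leaving a residual finite ambiguity (your even-power example). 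That verification is never supplied, so the decisive step of the proof is a placeholder. Moreover, rational recoverability of every variable from the data is a strictly stronger property than the lemma asserts, so this route demands more than is needed and would require controlling the polynomials' supports in a way that is nowhere available.

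The paper closes the finite-to-unique gap with a short genericity argument instead of elimination. Fix a polynomial, say $p_1$, involving some variable $x_1$. The $t-1$ algebraically independent polynomials in $\mathcal{P}(\mathbf{\breve{\Omega}}^{\prime})\setminus p_1$ involve all $t-1$ variables, so by Note $5$ they admit only finitely many solution tuples $(x_1,\dots,x_{t-1})$. One of these tuples is the true one and hence satisfies $p_1$. For any \emph{other} fixed tuple, the event that it also satisfies the equation $p_1$, whose coefficients are chosen generically, has probability zero; a finite union of measure-zero events is still measure zero. Therefore, with probability one, exactly one of the finitely many candidates is consistent with $p_1$, and all involved variables are uniquely determined. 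No rational expressions are constructed and no denominators need to be controlled, so your third stage becomes unnecessary once the second is replaced by this argument. You should rewrite your second stage along these lines rather than attempt the elimination.
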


\begin{proof}
According to Lemma \ref{lemma3new}, the number of involved variables in polynomials in $\mathcal{P}(\mathbf{\breve{\Omega}}^{\prime})=\{p_1,p_2,\dots,p_t\}$ is $t-1$ and are denoted by $\{x_1,\dots,x_{t-1}\}$. Moreover, as mentioned in the proof of Lemma \ref{lemma3new}, $\mathcal{P}(\mathbf{\breve{\Omega}}^{\prime}) \backslash p_i$ is a set of algebraically independent polynomials and the number of involved variables is $t-1$, $i=1,\dots,t$. Consider an arbitrary variable $x_1$ that is involved in polynomials in $\mathcal{P}(\mathbf{\breve{\Omega}}^{\prime})$ and without loss of generality, assume that $x_1$ is involved in $p_1$.

On the other hand, as mentioned all variables $\{x_1,\dots,x_{t-1}\}$ are involved in algebraically independent polynomials in $\mathcal{P}(\mathbf{\breve{\Omega}}^{\prime}) \backslash p_1$. Hence, according to Note $5$, tuple $(x_1,\dots,x_{t-1})$ can be determined finitely. Given that one of these finitely many tuples $(x_1,\dots,x_{t-1})$ satisfy polynomial equation $p_1$, according to Note $5$, with probability one there does not exist another tuple $(x_1,\dots,x_{t-1})$ among them that satisfies polynomial equation $p_1$. This is because with probability zero a tuple satisfy a polynomial equation in which the coefficients are chosen generically, and also the fact that the number of such tuples is finite.
\end{proof}

The next theorem provides two conditions on the sampling pattern to ensure unique completability. The first one is as the same as the condition proposed in Theorem \ref{thm5} for finite completability. The second condition takes advantage of finite completability and leads to the unique core tensor.

\begin{theorem}\label{thm:uniq}
Suppose that assumptions $A_j^{+}$ and $B_j$ hold. Also, assume that there exist two disjoint subtensors $\mathbf{\breve{\Omega}}^{\prime} \in \mathbb{R}^{n_1 \times n_2 \times \cdots \times n_j \times n}$ and $\mathbf{\breve{\Omega}}^{{\prime}}_0 \in \mathbb{R}^{n_1 \times n_2 \times \cdots \times n_j \times n_0}$ of the constraint tensor such that $ n = \left(\Pi_{i=1}^{j} n_i\right) \left(  \Pi_{i=j+1}^{d} r_i\right)  - \left( \sum_{i=j+1}^{d}  r_i^{2} \right)$ and $n_0 = \left(\Pi_{i=1}^{j} n_i\right)  - {\lfloor \frac{\sum_{i=j+1}^{d} r_i^2}{\Pi_{i=j+1}^{d} r_i} \rfloor}$ with the following conditions:

\noindent (i) For any $t \in \{1,\dots,n\}$ and any subtensor $\mathbf{\breve{\Omega}}^{\prime \prime} \in \mathbb{R}^{n_1 \times n_2 \times \cdots \times n_j  \times t}$ of the tensor $\mathbf{\breve{\Omega}}^{\prime}$, the following inequality holds
\begin{eqnarray}
\left(\Pi_{i=j+1}^{d} r_i \right)  m_{j+1}(\mathbf{\breve{\Omega}}^{\prime \prime}) - g_{j+1}(m_{j+1}(\mathbf{\breve{\Omega}}^{\prime \prime})) \geq t.
\end{eqnarray}
(ii) For any $t^{\prime} \in \{1,\dots,n_0\}$ and any subtensor $\mathbf{\breve{\Omega}}^{{\prime \prime}}_0 \in \mathbb{R}^{n_1 \times n_2 \times \cdots \times n_j \times  t^{\prime}}$ of the tensor $\mathbf{\breve{\Omega}}^{{\prime}}_0$, the following inequality holds 
\begin{eqnarray}\label{unisub}
 \left( \Pi_{i=j+1}^{d} r_i \right) m_{j+1}(\mathbf{\breve{\Omega}}^{\prime \prime}_0)  -  g_{j+1}(m_{j+1}(\mathbf{\breve{\Omega}}^{\prime \prime}_0)) \geq \left( \Pi_{i=j+1}^{d} r_i \right)t^{\prime} - \left( \sum_{i=j+1}^{d}  r_i^{2} \right) \left( t^{\prime} - n_0 +1\right)^+.
\end{eqnarray}
Then, for almost every $\mathcal{U}$ with probability one, there exists exactly one tensor that fits in the sampled tensor, and also $\text{rank}(\mathbf{U}_{(i)})=r_i$, $i = j+1,\dots,d$. Therefore there is a unique completion of the sampled tensor with rank of $(r_1,r_2,\ldots,r_d)$.
\end{theorem}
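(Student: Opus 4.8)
The plan is to reduce the claim to a statement about the canonical core tensor and then promote the finite completability of Theorem \ref{thm5} to uniqueness by exhibiting, for every unknown entry of the core tensor, a minimally algebraically dependent set of polynomials that involves it; Lemma \ref{lemma3} then forces that entry to take a single value. First I would dispose of the tuple $\mathbb{T}$. Since Assumption $A_j^{+}$ implies Assumption $A_j$, Lemma \ref{Ber} already gives finitely many tuples $\mathbb{T}$, and the additional $\sum_{i=j+1}^{d} n_i$ observed entries provided by $A_j^{+}$ produce polynomials that, by Note $5$, involve each variable of $\mathbb{T}$ and so pin $\mathbb{T}$ down uniquely with probability one. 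By Remark \ref{class} it therefore suffices to prove that exactly one canonical core tensor $\mathcal{C}$ fits the data. Next, I note that the existence of $\mathbf{\breve{\Omega}}^{\prime}$ with $n = \left(\Pi_{i=1}^{j} n_i\right)\left(\Pi_{i=j+1}^{d} r_i\right) - \sum_{i=j+1}^{d} r_i^2$ columns together with condition (i) and Assumption $B_j$ is verbatim the hypothesis of Theorem \ref{thm5}; hence there are already only finitely many such canonical core tensors, and the remaining task is to collapse this finite set to a singleton using $\mathbf{\breve{\Omega}}_0^{\prime}$.

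The mechanism I would use is matroidal. By condition (i) and Theorem \ref{thm4}, the polynomials $\mathcal{P}(\mathbf{\breve{\Omega}}^{\prime})$ are algebraically independent, and there are $n$ of them, equal to the number of unknown entries of $\mathcal{C}$; thus the solution variety is already zero-dimensional. Consequently, for each of the $n_0$ columns $q$ of $\mathbf{\breve{\Omega}}_0^{\prime}$, the polynomial attached to $q$ is algebraically dependent on $\mathcal{P}(\mathbf{\breve{\Omega}}^{\prime})$, so $\mathcal{P}(\mathbf{\breve{\Omega}}^{\prime})\cup\{q\}$ contains a unique minimal dependency (circuit) through $q$. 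By Lemma \ref{lemma3new} this circuit, if it has $t$ members, involves exactly $t-1$ variables, and by Lemma \ref{lemma3} all of those variables — in particular every unknown entry involved in $q$ — are uniquely recoverable for almost every $\mathcal{U}$. Running this over all columns of $\mathbf{\breve{\Omega}}_0^{\prime}$, the set of uniquely determined entries is the union of the circuit-variable sets, and the whole argument closes provided these circuits collectively cover all $n$ unknown entries.

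The heart of the proof is showing that condition (ii) is exactly the combinatorial guarantee for this coverage. I would argue by a greedy/peeling induction on $\mathbf{\breve{\Omega}}_0^{\prime}$: at each stage, condition (ii) keeps the number of involved unknowns $\left(\Pi_{i=j+1}^{d} r_i\right) m_{j+1}(\mathbf{\breve{\Omega}}_0^{\prime\prime}) - g_{j+1}(m_{j+1}(\mathbf{\breve{\Omega}}_0^{\prime\prime}))$ of every $t^{\prime}$-column subtensor above the threshold $\left(\Pi_{i=j+1}^{d} r_i\right)t^{\prime} - \left(\sum_{i=j+1}^{d} r_i^2\right)(t^{\prime} - n_0 + 1)^{+}$, which forces the helper polynomials to reach enough fresh variables so that no unknown entry is ever left outside a circuit. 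The choice $n_0 = \left(\Pi_{i=1}^{j} n_i\right) - \lfloor \sum_{i=j+1}^{d} r_i^2 / \Pi_{i=j+1}^{d} r_i \rfloor$ is precisely the number of helper columns needed so that, after accounting for the $\sum_{i=j+1}^{d} r_i^2$ known entries of the proper structure, every row of $\mathbf{\widetilde{C}}_{(j)}$ carrying unknowns is touched. I expect to phrase the coverage as a Hall-type / deficiency argument on the bipartite incidence between helper polynomials and rows of $\mathbf{\widetilde{C}}_{(j)}$, in the spirit of Lemma \ref{Omega} and Theorem \ref{genHall}.

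The hard part will be this last combinatorial step: converting the single scalar inequality in condition (ii) into the guarantee that the family of circuits covers every variable, and verifying that condition (ii) is preserved on the residual subtensor after each circuit is peeled off so that the induction never stalls. The delicate bookkeeping concentrates at the boundary $t^{\prime} = n_0$, where the correction term $-\left(\sum_{i=j+1}^{d} r_i^2\right)(t^{\prime} - n_0 + 1)^{+}$ supplies exactly the slack that lets the count ``wrap around'' through the $\sum_{i=j+1}^{d} r_i^2$ known entries, while the floor in the definition of $n_0$ keeps the threshold integral; controlling these endgame estimates is where I anticipate the main difficulty.
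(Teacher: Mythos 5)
Up to its final step, your plan coincides with the paper's own proof: Assumption $A_j^{+}$ pins down $\mathbb{T}$ uniquely; condition (i) together with Theorem \ref{thm5} gives finite completability and $n$ algebraically independent polynomials in $\mathcal{P}(\mathbf{\breve{\Omega}}^{\prime})$; and for each helper column $p_0$ of $\mathbf{\breve{\Omega}}^{\prime}_0$ one extracts a minimally algebraically dependent subset of $\mathcal{P}(\mathbf{\breve{\Omega}}^{\prime})\cup\{p_0\}$, so that Lemmas \ref{lemma3new} and \ref{lemma3} uniquely determine every variable it involves, in particular the $\Pi_{i=j+1}^{d} r_i$ entries of the row of $\widetilde{\mathbf{C}}_{(j)}$ indexed by the first $j$ coordinates of the observed entry behind $p_0$. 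All of that is exactly the paper's argument.

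The gap is the step you yourself flag as the anticipated difficulty: converting condition (ii) into the guarantee that every unknown entry of $\mathcal{C}$ gets determined. You leave this unproven and propose a mechanism --- a greedy peeling induction in which circuits are removed one at a time, condition (ii) is re-verified on the residual subtensor, and a Hall-type deficiency argument in the spirit of Theorem \ref{genHall} closes the count --- that is both unexecuted and structurally suspect: circuits are not disjoint (each reuses polynomials of $\mathcal{P}(\mathbf{\breve{\Omega}}^{\prime})$, which are never ``consumed''), so there is nothing to peel, and no preservation of condition (ii) under such deletions is available or needed. The paper needs no induction and no Hall-type theorem here (that machinery is used only for the probabilistic results, Lemma \ref{Omega} and Theorem \ref{genHall}). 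It closes with a single count: each helper column contributes $\Pi_{i=j+1}^{d} r_i$ uniquely determined entries, i.e., degree-one constraints; for any $t^{\prime}$-column subtensor these $\bigl(\Pi_{i=j+1}^{d} r_i\bigr) t^{\prime}$ constraints involve only the $\bigl(\Pi_{i=j+1}^{d} r_i\bigr) m_{j+1}(\mathbf{\breve{\Omega}}^{\prime\prime}_0) - g_{j+1}\bigl(m_{j+1}(\mathbf{\breve{\Omega}}^{\prime\prime}_0)\bigr)$ unknowns in the touched rows, so condition (ii) for $t^{\prime}<n_0$ (where the correction term vanishes) is verbatim the Note 5 criterion that every subfamily involves at least as many variables as constraints, hence these constraints are algebraically independent; and $n_0$ is precisely the smallest number of columns with $\bigl(\Pi_{i=j+1}^{d} r_i\bigr) n_0 \geq \dim(\mathcal{C}) = n$, the correction term $\sum_{i=j+1}^{d} r_i^{2}$ at $t^{\prime}=n_0$ relaxing the requirement to account for the determined entries that are already-known entries of the proper structure. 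One therefore ends with $n$ algebraically independent constraints on the $n$ unknowns, which forces every unknown to appear in --- and hence be fixed by --- one of them. The ``coverage'' you worry about is thus a one-shot consequence of Note 5 applied to the family of determined-entry equations; replacing your peeling/Hall step by this counting argument is what completes the proof.
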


\begin{proof}
As mentioned Assumption $A_j^+$ results that $\mathbb{T}$ can be determined uniquely. In order to complete the proof it suffices to show the core tensor $\mathcal{C}$ can be determined uniquely as well. As defined earlier, $\mathcal{P}(\mathbf{\breve{\Omega}}^{\prime})$ and $\mathcal{P}(\mathbf{\breve{\Omega}}^{\prime}_0)$ denote the set of polynomials obtained from the sampled entries corresponding to $\mathbf{\breve{\Omega}}^{\prime}$ and $\mathbf{\breve{\Omega}}^{\prime}_0$, respectively. According to Theorem \ref{thm5}, $\mathcal{P}(\mathbf{\breve{\Omega}}^{\prime})$ results in finite completability of the sampled tensor $\mathcal{U}$, and also $n$ algebraically independent polynomials. Hence, adding any of the polynomials in $\mathcal{P}(\mathbf{\breve{\Omega}}^{\prime}_0)$ to $\mathcal{P}(\mathbf{\breve{\Omega}}^{\prime})$ results in an algebraically dependent set of polynomials. Using $n$ algebraically independent polynomials in $\mathcal{P}(\mathbf{\breve{\Omega}}^{\prime})$ and polynomials in $\mathcal{P}(\mathbf{\breve{\Omega}}^{\prime}_0)$, we show all entries of core tensor can be determined uniquely.

Let $p_0$ denote an arbitrary polynomial in $\mathcal{P}(\mathbf{\breve{\Omega}}^{\prime}_0)$. Consider $\mathcal{P}^{\prime}(p_0) \subset \mathcal{P}(\mathbf{\breve{\Omega}}^{\prime})$ such that $\mathcal{P}^{\prime}(p_0) \cup p_0$ is a minimally algebraically dependent set of polynomials. Let tuple $(x_1,\dots,x_j)$ denote the first $j$ coordinates of the corresponding observed entry to polynomial $p_0$. Then, according to Note $2$, all $\Pi_{i=j+1}^{d} r_i$ entries of core tensor $\mathcal{C}$ with the first $j$ coordinates as $(x_1,\dots,x_j)$ are involved in polynomial $p_0$, and therefore they can be determined uniquely, according to Lemma \ref{lemma3}. However, a subset of these $\Pi_{i=j+1}^{d} r_i$ entries of core tensor may be the elements of the proper structure which are known entries.

Similarly, repeating this procedure for any subtensor $\mathbf{\breve{\Omega}}^{{\prime \prime}}_0 \in \mathbb{R}^{n_1 \times n_2 \times \cdots \times n_j \times  t^{\prime}}$ of the tensor $\mathbf{\breve{\Omega}}^{{\prime}}_0$ results in $\left( \Pi_{i=j+1}^{d} r_i \right) t^{\prime}$ polynomials in terms of the $\left( \Pi_{i=j+1}^{d} r_i \right) m_{j+1}(\mathbf{\breve{\Omega}}^{\prime \prime}_0)  -  g_{j+1}(m_{j+1}(\mathbf{\breve{\Omega}}^{\prime \prime}_0))$ unknown entries of the core tensor. Observe that according to Note $5$, these  polynomials are algebraically independent if 
\begin{eqnarray}
\left( \Pi_{i=j+1}^{d} r_i \right) m_{j+1}(\mathbf{\breve{\Omega}}^{\prime \prime}_0)  -  g_{j+1}(m_{j+1}(\mathbf{\breve{\Omega}}^{\prime \prime}_0)) \geq \left( \Pi_{i=j+1}^{d} r_i \right) t^{\prime},
\end{eqnarray}
for any subtensor $\mathbf{\breve{\Omega}}^{{\prime \prime}}_0 \in \mathbb{R}^{n_1 \times n_2 \times \cdots \times n_j \times  t^{\prime}}$ of the tensor $\mathbf{\breve{\Omega}}^{{\prime}}_0$. In order to include $\text{dim}(\mathcal{C}) = \left(\Pi_{i=1}^{j} n_i\right) \left(  \Pi_{i=j+1}^{d} r_i\right)  - \left( \sum_{i=j+1}^{d}   r_i^{2} \right)$ algebraically independent polynomials, $\mathbf{\breve{\Omega}}^{{\prime}}_0$ should include at least $n_0$ columns otherwise for any subtensor $\mathbf{\breve{\Omega}}^{{\prime \prime}}_0 \in \mathbb{R}^{n_1 \times n_2 \times \cdots \times n_j \times  t^{\prime}}$ of the tensor $\mathbf{\breve{\Omega}}^{{\prime}}_0$ we have $ \text{dim}(\mathcal{C}) > \left( \Pi_{i=j+1}^{d} r_i \right)t^{\prime}$ since $t^{\prime} < n_0$. Therefore, inequality \eqref{unisub} results that all $n$ variables (unknown entries) of the core tensor can be determined uniquely.
\end{proof}

We are also interested to obtain a lower bound on the sampling probability, which ensures the unique completability. The following lemma leads to Lemma \ref{uniqsampprob} that characterizes the number of required sampled entries to ensure the unique completability with high probability.

\begin{lemma}\label{minsampuni}
Consider an arbitrary set $\Omega^{\prime}_{(1)}$ of $n_1 $ columns of $\Omega_{(1)}$ (first matricization of $\Omega$). Assume that each column of $\Omega_{(1)}$ includes at least $l$ nonzero entries, where $l > 6 \ \log \left( {n_1} \right) + 2 \ \log \left( \frac{n_1 k}{\epsilon} \right) + 4$. Then, with probability  at least $1-\frac{\epsilon}{k}$ every {\bf proper} subset $\Omega^{\prime \prime}_{(1)}$ of columns of $\Omega^{\prime}_{(1)}$ satisfies 
\begin{eqnarray}\label{properuni}
m_{2}(\Omega^{\prime \prime}_{(1)}) -1 \geq t,
\end{eqnarray}
where $t$ is the number of columns of $\Omega^{\prime \prime}_{(1)}$ and $m_{2}(\Omega^{\prime \prime}_{(1)})$ is the number of nonzero rows of $\Omega^{\prime \prime}_{(1)}$.
\end{lemma}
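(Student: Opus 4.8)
The plan is to reduce the statement to Lemma~\ref{minsamp} by a ``leave-one-out'' union bound over the single column that any proper subset must omit. The key observation is that every \emph{proper} subset $\Omega^{\prime\prime}_{(1)}$ of the $n_1$ columns of $\Omega^{\prime}_{(1)}$ excludes at least one column, and is therefore contained in one of the $n_1$ sub-collections obtained by deleting a single column, each consisting of exactly $n_1-1$ columns. Consequently, if property~\eqref{proper1} holds for \emph{every} subset of \emph{each} of these $(n_1-1)$-column collections, then it holds in particular for every proper subset of the full $n_1$-column collection, which is precisely the claim~\eqref{properuni}.

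Concretely, for each $c \in \{1,\dots,n_1\}$ I would let $A_c$ denote the collection of $n_1-1$ columns obtained by removing the $c$-th column of $\Omega^{\prime}_{(1)}$. Since each column of $\Omega_{(1)}$ contains at least $l$ nonzero entries, the hypothesis of Lemma~\ref{minsamp} is satisfied for every $A_c$. Applying Lemma~\ref{minsamp} to $A_c$ with its parameter $k$ replaced by $n_1 k$ requires exactly $l > 6\log(n_1) + 2\log\!\left(\frac{n_1 k}{\epsilon}\right) + 4$, which is the hypothesis assumed here, and it guarantees that with probability at least $1-\frac{\epsilon}{n_1 k}$ every subset of columns of $A_c$ satisfies~\eqref{proper1}. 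A union bound over the $n_1$ values of $c$ then gives probability at least $1 - n_1 \cdot \frac{\epsilon}{n_1 k} = 1 - \frac{\epsilon}{k}$ that this holds simultaneously for all $c$; no independence across the events is needed, since the union bound only uses subadditivity.

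On this good event, any proper subset $\Omega^{\prime\prime}_{(1)}$, having $t \le n_1-1$ columns, omits some column $c$ and hence is a subset of the columns of $A_c$; the conclusion of Lemma~\ref{minsamp} applied to $A_c$ then yields $m_2(\Omega^{\prime\prime}_{(1)}) - 1 \ge t$, which is~\eqref{properuni}. The step I would verify most carefully is the bookkeeping of the reduction: confirming that inflating the failure parameter from $k$ to $n_1 k$ is exactly what promotes the $l$-threshold of Lemma~\ref{minsamp} to the one stated here, and that restricting to proper subsets is essential — for the full $n_1$-column collection one would need $m_2 \ge n_1+1 > n_1$, which is infeasible. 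Beyond this accounting, no re-derivation of the combinatorial summation underlying Lemma~\ref{minsamp} is required.
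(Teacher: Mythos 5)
Your proposal is correct and follows essentially the same route as the paper's proof: both apply Lemma~\ref{minsamp} with the failure parameter inflated from $k$ to $n_1 k$ to each of the $n_1$ leave-one-out collections of $n_1-1$ columns, take a union bound to get the overall probability $1-\frac{\epsilon}{k}$, and then observe that every proper subset of $\Omega^{\prime}_{(1)}$ lies inside one of these collections. Your write-up is in fact more explicit than the paper's two-sentence argument about why the restriction to proper subsets is needed and how the threshold on $l$ matches, but the underlying idea is identical.
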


\begin{proof}
According to Lemma \ref{minsamp}, every subset $\Omega^{\prime \prime}_{(1)}$ of columns of $\Omega^{\prime}_{(1)}$ with $n_1 -1$ columns satisfies property  \eqref{properuni} with probability  at least $1-\frac{\epsilon}{n_1 k}$. There are exactly $n_1$ subsets with $n_1 -1 $ columns, and therefore property \eqref{properuni} holds for all of them with probability  at least $1-\frac{\epsilon}{k}$.
\end{proof}

\begin{lemma}\label{uniqsampprob}
Assume that $\sum_{i=2}^{d} \left( n_ir_i \right) < n_{2}\dots n_d$, $\sum_{i=2}^{d} r_i^2 \leq \Pi_{i=2}^{d} r_i$,  $\Pi_{i=2}^{d}n_i \geq n_1  (\Pi_{i=2}^{d} r_i +1) - \sum_{i=2}^{d} r_i^2$, and Assumption $B_1$ hold. Furthermore, assume that each column of $\mathbf{U}_{(1)}$  includes at least $l$ observed entries, where
\begin{eqnarray}\label{unisampnumb}
l > 6 \ \log \left( {n_1} \right) + 2 \max \left\{ \log \left( \frac{ \sum_{i=2}^{d} r_i^2}{\epsilon}\right) ,  \log \left( \frac{\Pi_{i=2}^{d} r_i - \sum_{i=2}^{d} r_i^2}{\epsilon} \right), \log \left( \frac{n_1}{\epsilon} \right) \right\} + 8.
\end{eqnarray}
Then, with probability  at least $1-\epsilon$, there exists only one completion of rank $(r_1,\cdots,r_d)$.
\end{lemma}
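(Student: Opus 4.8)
The plan is to show that, with probability at least $1-\epsilon$, both deterministic conditions of Theorem \ref{thm:uniq} hold for $j=1$; unique completability then follows directly from that theorem. The argument runs parallel to the proof of Theorem \ref{thm6}, but must in addition produce the second disjoint subtensor $\mathbf{\breve{\Omega}}^{\prime}_0$ and verify the sharper inequality \eqref{unisub}. First I would establish Assumption $A_1^+$: since each column of $\mathbf{U}_{(1)}$ carries at least $l$ observed entries (in particular at least one) and $\sum_{i=2}^{d} n_i r_i < n_2 \cdots n_d$, the counting argument of Remark \ref{remthm5} lets me pick $\sum_{i=2}^{d} n_i(r_i+1)$ observed entries in distinct columns meeting the hull condition, which forces $\mathbb{T}$ to be unique. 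It then remains to determine the core tensor $\mathcal{C}$ uniquely.

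For condition (i) I would replicate the construction in the proof of Theorem \ref{thm6} verbatim: carve out $\Pi_{i=2}^{d} r_i$ disjoint groups of columns of $\Omega_{(1)}$, namely $\sum_{i=2}^{d} r_i^2$ groups of size $n_1-1$ and $\Pi_{i=2}^{d} r_i - \sum_{i=2}^{d} r_i^2$ groups of size $n_1$, apply Lemmas \ref{minsamp} and \ref{minsamp2}, and pass to the constraint matrix via Lemma \ref{Omega}. This yields a subtensor $\mathbf{\breve{\Omega}}^{\prime}$ with $n = n_1 \Pi_{i=2}^{d} r_i - \sum_{i=2}^{d} r_i^2$ columns satisfying condition (i). The strengthened hypothesis $\Pi_{i=2}^{d} n_i \geq n_1(\Pi_{i=2}^{d} r_i + 1) - \sum_{i=2}^{d} r_i^2$ — compared with the finite case — supplies roughly $n_1$ further columns of $\Omega_{(1)}$, which I reserve as a disjoint group to build $\mathbf{\breve{\Omega}}^{\prime}_0$ with $n_0 = n_1 - \lfloor \frac{\sum_{i=2}^{d} r_i^2}{\Pi_{i=2}^{d} r_i} \rfloor$ columns.

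For condition (ii) I would invoke Lemma \ref{minsampuni} on the reserved group, which guarantees that every \emph{proper} subset of its columns satisfies $m_2 - 1 \geq t'$. The crux is translating this into \eqref{unisub}. For $t' \leq n_0 - 1$ the correction term vanishes, and since $g_{2}(m_2) \leq \sum_{i=2}^{d} r_i^2 \leq \Pi_{i=2}^{d} r_i$ by the standing assumption, the bound $m_2 \geq t'+1$ already gives $\left(\Pi_{i=2}^{d} r_i\right) m_2 - g_{2}(m_2) \geq \left(\Pi_{i=2}^{d} r_i\right)(t'+1) - \Pi_{i=2}^{d} r_i = \left(\Pi_{i=2}^{d} r_i\right) t'$, which is exactly \eqref{unisub}. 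For the full set $t' = n_0$, where the correction term $(t'-n_0+1)^+$ switches on, I would use monotonicity of $m_2$ in the column set: applying $m_2 - 1 \geq t'$ to a size-$(n_0-1)$ proper subset forces $m_2 \geq n_0$ for the full set, so that $\left(\Pi_{i=2}^{d} r_i\right) m_2 - g_{2}(m_2) \geq \left(\Pi_{i=2}^{d} r_i\right) n_0 - \sum_{i=2}^{d} r_i^2$, which matches the right-hand side of \eqref{unisub} at $t'=n_0$.

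Finally I would take a union bound over the three families of failure events (the $\sum r_i^2$ events for \eqref{proper1}, the remaining events for \eqref{proper2}, and the event for \eqref{properuni}), with the logarithmic margins in \eqref{unisampnumb} — the extra $\log(n_1/\epsilon)$ term and the constant $8$ — chosen so that each failure probability is small enough to sum to at most $\epsilon$. The main obstacle I anticipate is the bookkeeping for condition (ii): ensuring the reserved group is genuinely disjoint from the $n$ columns already spent on condition (i), correctly handling the boundary case $t'=n_0$ via the monotonicity argument (Lemma \ref{minsampuni} controls only proper subsets), and splitting $\epsilon$ across the finite-completability and uniqueness events so that \eqref{unisampnumb} is precisely the margin required.
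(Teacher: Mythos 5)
Your proposal is correct and follows essentially the same route as the paper's own proof: split $\epsilon$ in two, get condition (i) of Theorem \ref{thm:uniq} from the Theorem \ref{thm6} machinery on one block of columns, and get condition (ii) on a disjoint reserved block via Lemma \ref{minsampuni}, treating proper subsets directly and the full-set case via exactly the same monotonicity trick (the paper forces $m_2 \geq n_1$ by applying the proper-subset bound at size $n_1-1$, just as you force $m_2 \geq n_0$). The only cosmetic difference is that the paper simply cites Theorem \ref{thm6} with $\epsilon/2$ where you inline its construction.
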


\begin{proof}
Observe that since $\Pi_{i=2}^{d}n_i \geq n_1 (\Pi_{i=2}^{d} r_i +1) - \sum_{i=2}^{d} r_i^2$ there exist $ n_1 \left(  \Pi_{i=2}^{d} r_i\right)  - \left( \sum_{i=2}^{d}  r_i^{2} \right)$ and $ n_1$ disjoint columns in $\Omega_{(1)}$ denoted by $\mathbf{\breve{\Omega}}^{{\prime}}$ and $\mathbf{\breve{\Omega}}^{{\prime}}_0$, respectively. In order to show unique completability, it suffices to show $\mathbf{\breve{\Omega}}^{{\prime}}$ and $\mathbf{\breve{\Omega}}^{{\prime}}_0$ satisfy properties (i) and (ii) in the statement of Theorem \ref{thm:uniq}, respectively, with probability  at least $1-\frac{\epsilon}{2}$. 

Note that having \eqref{unisampnumb}, it is easy to see that \eqref{minlfinite} holds with $\epsilon$ replaced by $\frac{\epsilon}{2}$. Therefore, according to Theorem \ref{thm6}, $\mathbf{\breve{\Omega}}^{{\prime}}$ satisfies property (i) in the statement of Theorem \ref{thm:uniq} with probability  at least $1-\frac{\epsilon}{2}$.

On the other hand, $\sum_{i=2}^{d} r_i^2 \leq \Pi_{i=2}^{d} r_i$ and according to Lemma \ref{minsampuni} (with $k=2$), with probability  at least $1-\frac{\epsilon}{2}$, any subset of columns of $\mathbf{\breve{\Omega}}^{{\prime}}_0$ satisfies \eqref{properuni}. For any {\bf proper} subset of column $\mathbf{\breve{\Omega}}^{{\prime \prime}}_0$ of $\mathbf{\breve{\Omega}}^{{\prime}}_0$, we have $m_{2}(\mathbf{\breve{\Omega}}^{{\prime \prime}}_0) -1 \geq t^{\prime}$ or equivalently ($t^{\prime}$ is the number of columns of $\mathbf{\breve{\Omega}}^{{\prime \prime}}_0$)
\begin{eqnarray}\label{inkajsnm}
 \left( \Pi_{i=j+1}^{d} r_i \right) m_{j+1}(\mathbf{\breve{\Omega}}^{\prime \prime}_0)  -  \Pi_{i=2}^{d} r_i \geq \left( \Pi_{i=j+1}^{d} r_i \right)t^{\prime},
\end{eqnarray}
which results in \eqref{unisub} as $g_{j+1}(m_{j+1}(\mathbf{\breve{\Omega}}^{\prime \prime}_0)) \leq \sum_{i=2}^{d} r_i^2 \leq \Pi_{i=2}^{d} r_i$ and $\left( t^{\prime} - n_0 +1\right)^+ = 0$. Hence, in order to show property (ii) in the statement of Theorem \ref{thm:uniq} holds with probability  at least $1-\frac{\epsilon}{2}$ (which completes the proof) we only need to show that  \eqref{unisub} holds when $\mathbf{\breve{\Omega}}^{{\prime \prime}}_0=\mathbf{\breve{\Omega}}^{{\prime}}_0$.

Note that $m_{j+1}(\mathbf{\breve{\Omega}}^{\prime \prime}_0) \leq m_{j+1}(\mathbf{\breve{\Omega}}^{\prime}_0)$. Therefore, as $m_{2}(\mathbf{\breve{\Omega}}^{{\prime \prime}}_0) -1 \geq t^{\prime}$ holds for any proper subset of column $\mathbf{\breve{\Omega}}^{{\prime \prime}}_0$ of $\mathbf{\breve{\Omega}}^{{\prime}}_0$, we have $m_{j+1}(\mathbf{\breve{\Omega}}^{\prime}_0) \geq n_1$ (by setting $t^{\prime}=n_1-1$). Then, \eqref{unisub} holds as $g_{j+1}(m_{j+1}(\mathbf{\breve{\Omega}}^{\prime}_0)) \leq \sum_{i=2}^{d} r_i^2$ and $\left( t^{\prime} - n_0 +1\right)^+ = 1$.
\end{proof}

Finally, in the following, we use Corollary \ref{azuma} and the above lemma to propose a lower bound on the sampling probability to guarantee the unique completability with high probability.

\begin{corollary}
Assume that $\sum_{i=j+1}^{d} r_i^2 \leq \Pi_{i=j+1}^{d} r_i$, $\Pi_{i=j+1}^{d}n_i \geq N_j  (\Pi_{i=j+1}^{d} r_i +1) - \sum_{i=j+1}^{d} r_i^2$, and Assumption $B_j$ hold. Furthermore, assume that we observe each entry of the  tensor $\mathcal{U}$ with probability  $p$, where
\begin{eqnarray}\label{comp4}
p >  \frac{1}{N_j} \left( 6 \ \log \left( {N_j} \right) + 2 \log \left( \max \left\{ \frac{ \sum_{i=j+1}^{d} r_i^2}{\epsilon},    \frac{\Pi_{i=j+1}^{d} r_i - \sum_{i=j+1}^{d} r_i^2}{\epsilon} ,  \frac{N_j}{\epsilon}  \right\} \right) + 8 \right) + \frac{1}{\sqrt[4]{N_j}}.
\end{eqnarray}
Then, with probability  at least $1-2\epsilon$, there exists only one completion for $\mathcal{U}$, given the rank components $r_{j+1},\dots,r_d$.
\end{corollary}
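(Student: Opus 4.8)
The plan is to mirror the proof of the finite-completability corollary that precedes Theorem \ref{thm11}, replacing its deterministic-pattern input Theorem \ref{thm6} by the unique-completability counterpart Lemma \ref{uniqsampprob}, and using Corollary \ref{azuma} to pass from the bound on the sampling probability $p$ to a guarantee on the number of observed entries per column. First I would invoke the reduction noted in the remark following Theorem \ref{thm6}: since only the components $r_{j+1},\dots,r_d$ are given, the first $j$ dimensions can be treated as a single merged dimension of size $N_j = n_1 \cdots n_j$, so that the $j$-th unfolding $\widetilde{\mathbf{U}}_{(j)}$ plays exactly the role that $\mathbf{U}_{(1)}$ plays in Lemma \ref{uniqsampprob}. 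Under this identification the three hypotheses of the corollary---$\sum_{i=j+1}^d r_i^2 \le \Pi_{i=j+1}^d r_i$, the column-count inequality $\Pi_{i=j+1}^d n_i \ge N_j(\Pi_{i=j+1}^d r_i + 1) - \sum_{i=j+1}^d r_i^2$, and Assumption $B_j$---become precisely the hypotheses of the $j$-generalized form of Lemma \ref{uniqsampprob}.

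Next I would translate the sampling probability into a per-column sample count. Each column of $\widetilde{\mathbf{U}}_{(j)}$ is a vector of $N_j$ entries sampled independently with probability $p$, so applying Corollary \ref{azuma} (equivalently Lemma \ref{xgfh} with $c = \sqrt[4]{N_j}$) gives that, with high probability, at least $(p - N_j^{-1/4})N_j$ entries of each column are observed. The hypothesis \eqref{comp4} is designed exactly so that $(p - N_j^{-1/4})N_j$ exceeds the quantity $l$ required in the $j$-generalized version of \eqref{unisampnumb}; indeed, rewriting \eqref{comp4} as $p > l/N_j + N_j^{-1/4}$ with $l = 6\log(N_j) + 2\log(\max\{\cdots\}) + 8$ and using the monotonicity of $\log$ to identify $2\log(\max\{\cdots\})$ with $2\max\{\log(\cdots)\}$, the constants and the three-way maximum match line for line. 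Thus with probability at least $1-\epsilon$ every column of $\widetilde{\mathbf{U}}_{(j)}$ contains at least $l$ observed entries.

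Finally, conditioned on this event, the $j$-generalized Lemma \ref{uniqsampprob} guarantees that with probability at least $1-\epsilon$ the pattern forces a unique completion of rank $(r_1,\dots,r_d)$: its two halves supply, respectively, the finite-completability pattern $\mathbf{\breve{\Omega}}^{\prime}$ of property (i) of Theorem \ref{thm:uniq} and the auxiliary pattern $\mathbf{\breve{\Omega}}^{\prime}_0$ of property (ii), while Assumption $A_j^+$ pins down $\mathbb{T}$ uniquely. A union bound over the two failure events---too few samples in some column, and the combinatorial property failing despite enough samples---then yields total failure probability at most $2\epsilon$, which is the claim. The one step that genuinely requires care, rather than routine substitution, is the probability accounting in the translation step: the tail in Corollary \ref{azuma} is a per-column bound of the form $\exp(-\sqrt{N_j}/2)$, so making the simultaneous-over-all-columns guarantee cost only a single $\epsilon$ relies on $N_j$ being large enough that the union bound over the $\Pi_{i=j+1}^d n_i$ columns is absorbed; I would make this dependence explicit rather than leaning on the informal ``with probability almost one'' phrasing used earlier.
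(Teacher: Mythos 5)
Your proposal is correct and follows essentially the same route as the paper, whose proof of this corollary is simply the observation that it follows from Lemma \ref{uniqsampprob} (in its $j$-generalized form, with the first $j$ dimensions merged so that $\widetilde{\mathbf{U}}_{(j)}$ plays the role of $\mathbf{U}_{(1)}$) combined with Corollary \ref{azuma}, exactly as you lay out. Your closing remark about making the union bound over the $\Pi_{i=j+1}^{d} n_i$ columns explicit, rather than relying on the per-column tail $\exp(-\sqrt{N_j}/2)$ being ``almost one,'' is a legitimate refinement of a step the paper leaves implicit, but it does not change the argument.
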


\begin{proof}
The proof is straightforward given Lemma \ref{uniqsampprob} and Corollary \ref{azuma}.
\end{proof}

\section{Numerical Comparisons}\label{simulations}

In order to show the advantage of our proposed method over matrix analysis, we compare the lower bound on the sampling probability obtained by matricization analysis with the bound obtained by tensor analysis. 

In the first example, numerical comparisons are performed on a $4^{\text{th}}$-order tensor $\mathcal{U} \in \mathbb{R}^{900 \times 900 \times 900 \times 900}$. Figure \ref{fig1} plots the bounds given in \eqref{comp1} (Grassmannian manifold) and \eqref{comp2} (Tucker manifold) for finite completability, where $r_1= \cdots = r_4 = r$, and $\epsilon = 0.0001$. As the second example, we consider a $6^{\text{th}}$-order tensor $\mathcal{U} \in \mathbb{R}^{900 \times 900 \times 900 \times 900 \times 900 \times 900}$. Figure \ref{fig2} plots the bounds given in \eqref{comp1} (Grassmannian manifold) and \eqref{comp2} (Tucker manifold) for finite completability, where $r_3=r_4 = r_5 = r_4 = r$, and $\epsilon = 0.0001$. In this example, a significant reduction in the sampling probability is seen for the tensor model. 

\begin{figure}
	\centering
		{\includegraphics[width=11cm]{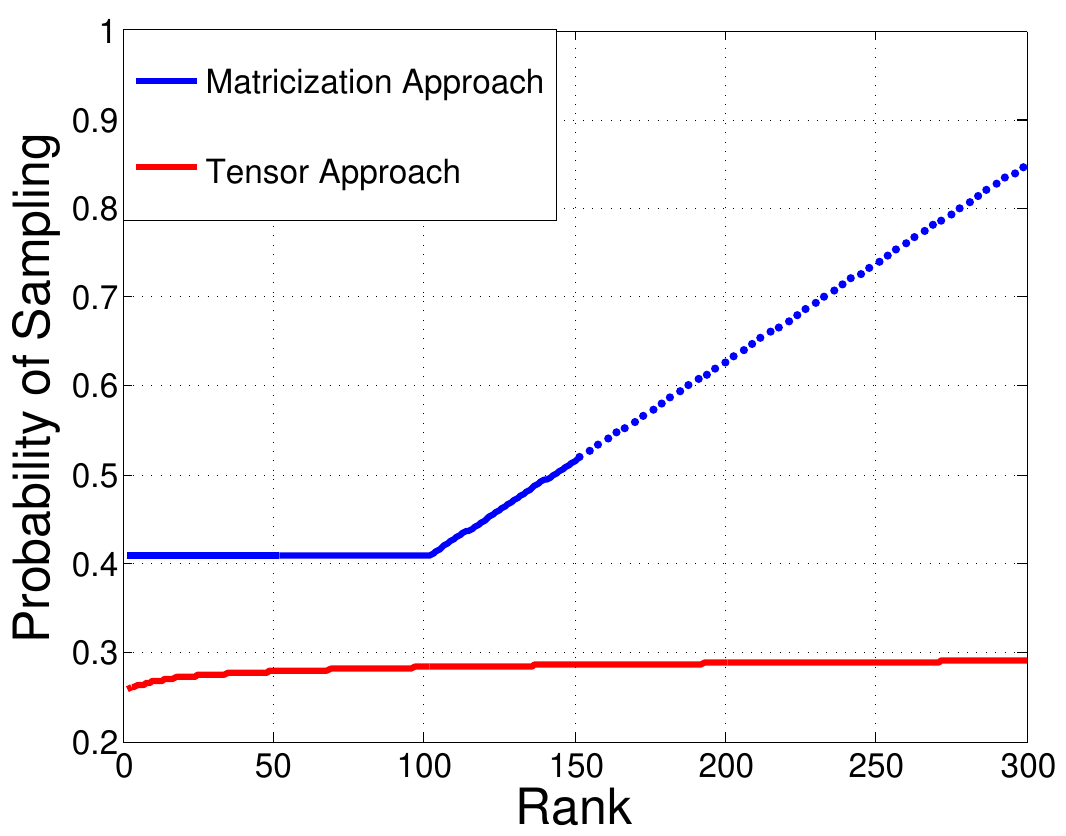}}
	\caption{ Lower bounds on sampling probability for a $4^{\text{th}}$-order tensor.}
	\label{fig1}\vspace{-4mm}
\end{figure}

\begin{figure}
	\centering	
		{\includegraphics[width=11cm]{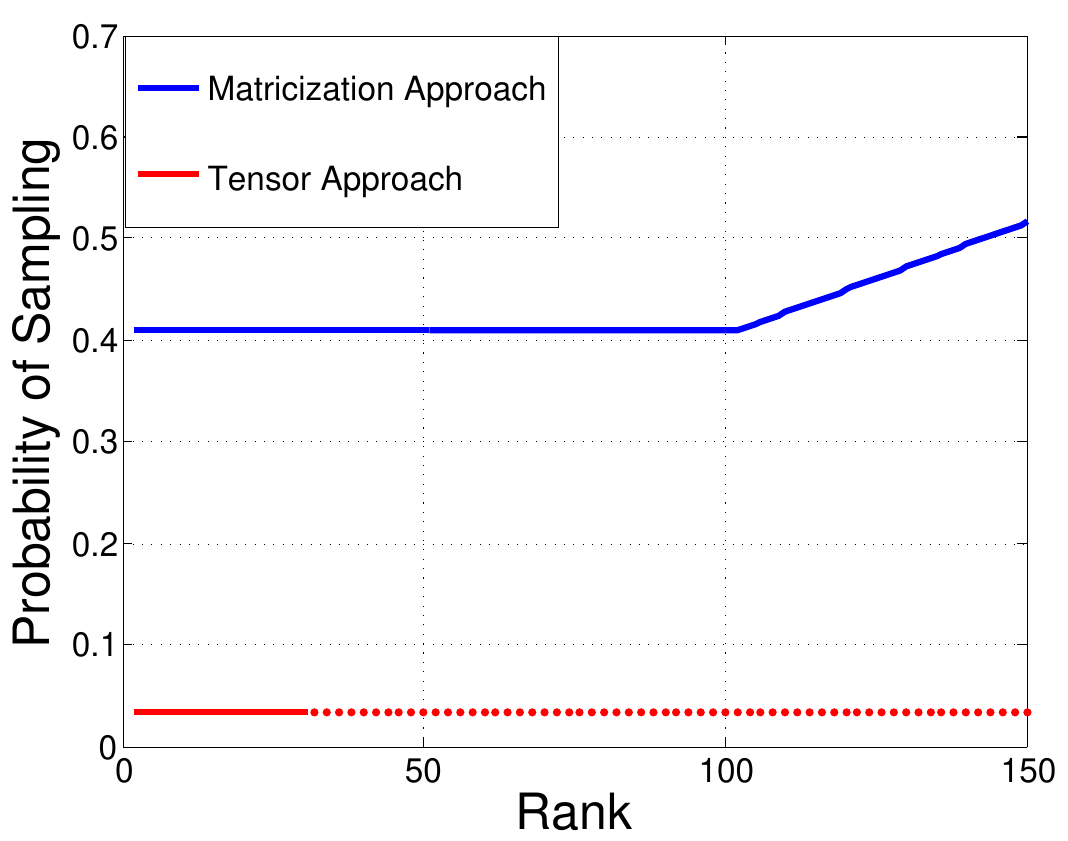}}
	\caption{  Lower bounds on sampling probability for a $6^{\text{th}}$-order tensor.}
	\label{fig2}\vspace{-4mm}
\end{figure}

We have the following observations:
\begin{itemize}
\item In Figure \ref{fig1}, the bound obtained through the analysis on Tucker manifold is less than the bound obtained through the low-order analysis on Grassmannian manifold. This improvement significantly increases as the value of the rank increases.

\item In Figure \ref{fig1}, the restriction $r \leq \frac{n}{6}$ in the analysis on Grassmannian manifold makes the bound valid only for $r \leq 150$ (and that is why the corresponding curve is dotted for $r > 150$ in Figure \ref{fig1}). On the other hand, the restriction $r  (d-1) \leq n$ in our proposed approach ensures the validity of the bound for $r \leq 300$. Similarly, in Figure \ref{fig2}, the restriction $\Pi_{i=j+1}^{d}n_i \geq N_j  \Pi_{i=j+1}^{d} r_i - \sum_{i=j+1}^{d} r_i^2$ in the analysis on Tucker manifold makes the bound valid only for $r \leq 30$.

\item Since the bounds for finite completability \eqref{comp4} and unique completability \eqref{comp2} result in almost the same curves in the above examples, we only plot bounds for finite completability in the figures. In general, the main difference between \eqref{comp2} and \eqref{comp4} is an additional term $\log\left(\frac{N_j}{\epsilon}\right)$ in the second term inside of maximum operator.
\end{itemize}

\section{Conclusions}\label{conc}

This paper aims to find fundamental conditions on the sampling pattern for finite completability of a low-rank partially sampled tensor. To solve this problem, a novel geometric approach on Tucker manifold is proposed. Specifically, a set of polynomials based on the location of the sampled entries are first defined, and then using Bernstein's theorem and analysis on Tucker manifold,  a relationship between a geometric pattern on the sampled entries and the number of algebraically independent polynomials among all of the defined polynomials is characterized. Moreover, an extension to Hall's theorem in graph theory is provided which is key to obtaining probabilistic conditions for finite and unique completabilities. Using these developed tools, we have addressed three problems in this paper: (i) Characterizing the necessary and sufficient conditions on the sampling pattern to have finitely many tensor completions for the given rank, (ii) Characterizing conditions on the sampling pattern to ensure that there is exactly one completion for the given rank, (iii) Lower bounding the sampling probability such that the conditions in Problems (i) and (ii) are satisfied with high probability.

Finally, through numerical examples it is seen that our proposed analysis on Tucker manifold for finite tensor completability leads to a much lower sampling rate than the matricization approach that is based on analysis 
on Grassmannian manifold.

\appendices
\section{Proof of Lemma \ref{subpro}}\label{apdx1}

As mentioned before, finiteness of the number of completions given $(r_{j+1},\dots,r_d)$ results finiteness of the number of completions given $(r_{j},\dots,r_d)$. We are interested to show this statement holds in Theorem \ref{thm5}, i.e, satisfying the properties (i) and (ii) for $j$ in the statement of Theorem \ref{thm5} results in satisfying the properties (i) and (ii) for $j-1$. Before we present this result, the following notation is introduced for the simplicity.
 
\begin{definition}
Let set $\mathcal{S}$ be a set of $d$-tuples such that each element of $\mathcal{S}$ consists of $d$ natural numbers $(x_1,\ldots,x_d)$. Define $f_{j}(\mathcal{S})$ as the number of different tuples of the first $j$ components of the elements of $\mathcal{S}$.
\end{definition}

For example, let $\mathcal{S} = \{ (2,2,1) , (2,3,3) , (2,2,2), (1,2,1),(2,3,1) \}$. Then, according to the definition $f_{2}(\mathcal{S}) = |\{ (2,2) , (2,3) , (1,2) \}| = 3$.

Now, we are ready to provide the proof of lemma. It is easy to see that Lemma \ref{subpro} is exactly equivalent to the following statement:

Assume that we sample a tensor $\mathcal{U} \in \mathbb{R}^{n_1 \times n_2 \times \cdots \times n_d}$. Suppose that there exist $ n = \left(\Pi_{i=1}^{j} n_i\right) \left(  \Pi_{i=j+1}^{d} r_i\right)  - \left( \sum_{i=j+1}^{d}   r_i^{2} \right)$ of the sampled entries such that any subset of them (called $\mathcal{S}$) with $t$ observed entires (for any $1 \leq t \leq n$) satisfies the following inequality
\begin{eqnarray}\label{j-th}
\left(\Pi_{i=j+1}^{d} r_i \right)  f_{j}(\mathcal{S}) - g_{j+1}(f_{j}(\mathcal{S})) \geq t.
\end{eqnarray}

Then, there exist $ n^{\prime} = \left(\Pi_{i=1}^{j-1} n_i\right) \left(  \Pi_{i=j}^{d} r_i\right)  - \left( \sum_{i=j}^{d}  r_i^{2} \right)$of the sampled entries such that any subset of them (called $\mathcal{S}^{\prime}$) with $t$ observed entires (for any $1 \leq t \leq n^{\prime}$) satisfies the following inequality
\begin{eqnarray}\label{j-1th}
\left(\Pi_{i=j}^{d} r_i \right) f_{j-1}(\mathcal{S}^{\prime}) - g_{j}(f_{j-1}(\mathcal{S}^{\prime})) \geq t.
\end{eqnarray}

Therefore, it suffices to show the above statement. For $r_j=1$, the statement can be easily verified, and therefore we consider the case of  $r_j>1$ in the rest of proof. Also recall the assumption $n_j > \sum_{i=1}^{d} r_{i} $. We partition all of the sampled entries into $n_j$ groups such that group $i$ includes all of the sampled entries that $j$-th component of their coordinate is equal to $i$. Let $\mathcal{S}_i$ denote the $i$-th group. Observe that every subset of the sampled entries of each group satisfies inequality \eqref{j-th}. Moreover, according to pigeonhole principle we know that between the defined $n_j$ groups there exist $r_j$ groups (without loss of generality assume $\mathcal{S}_1,\ldots,\mathcal{S}_{r_j}$) such that $\sum_{i=1}^{r_j} | \mathcal{S}_i | \geq \frac{r_j}{n_j}  n $.

Since $\frac{r_j}{n_j} n  = \left(\Pi_{i=1}^{j-1} n_i\right) \left(  \Pi_{i=j}^{d} r_i\right)  - \frac{r_j}{n_j} \left( \sum_{i=j+1}^{d} \ r_i^{2}  \right) \geq n^{\prime} $, we only need to show that any subset of the elements of the groups $\mathcal{S}_1,\ldots,\mathcal{S}_{r_j}$ satisfies inequality \eqref{j-1th}. Consider an arbitrary subset of the elements of groups $\mathcal{S}_1,\ldots,\mathcal{S}_{r_j}$ and denote it by $\mathcal{S}^{\prime}$. Assume that $\mathcal{S}^{\prime}_i$ denotes the elements of $\mathcal{S}^{\prime}$ that belong in $\mathcal{S}_i$ for $1 \leq i \leq r_j$. Recall that every subset of the sampled entries of each group satisfies inequality \eqref{j-th}. Moreover, due to the fact that in each group the $j$-th component of the coordinates are the same, for each subset of the entries of a group $\mathcal{S}_i$ we have $f_{j}(\mathcal{S}^{\prime}_i)=f_{j-1}(\mathcal{S}^{\prime}_i)$. As a result, we have
\begin{eqnarray}
\left(\Pi_{i=j+1}^{d} r_i \right)  f_{j-1}(\mathcal{S}^{\prime}_i) - g_{j+1}(f_{j-1}(\mathcal{S}^{\prime}_i)) \geq | \mathcal{S}^{\prime}_i |.
\end{eqnarray}

Without loss of generality, assume  $f_{j}(\mathcal{S}^{\prime}_1) \geq f_{j}(\mathcal{S}^{\prime}_2) \geq \ldots \geq f_{j}(\mathcal{S}^{\prime}_{r_j})$. Also, observe that for each $1 \leq i \leq r_j$ we have $f_{j}(\mathcal{S}^{\prime}_i) \leq f_{j}(\mathcal{S}^{\prime})$. Therefore, by adding up the above inequalities for $1 \leq i \leq r_j$ we have
\begin{eqnarray}\label{fhg}
\left(\Pi_{i=j}^{d} r_i \right)  f_{j-1}(\mathcal{S}^{\prime}_1) -    g_{j+1}(f_{j-1}(\mathcal{S}^{\prime}_1)) r_j\geq | \mathcal{S}^{\prime} |.
\end{eqnarray}

For the case that $1 < r_j < \sum_{i=j+1}^{d} r_i^2$ holds, we can see that  $r_j  \left( \sum_{i=j+1}^{d} r_i^2 \right) \geq \sum_{i=j}^{d} r_i^2 $ also holds, and therefore using \eqref{fhg} we can obtain
\begin{eqnarray}
\left(\Pi_{i=j}^{d} r_i \right) f_{j-1}(\mathcal{S}^{\prime}) -  g_{j}(f_{j-1}(\mathcal{S}^{\prime})) \geq | \mathcal{S}^{\prime} |,
\end{eqnarray}
which completes the proof. If $r_j \geq \sum_{i=j+1}^{d} r_i^2$ similarly using the above inequality and by ignoring $r_j^2$ of entries proof can be completed.

\section{Proof of Theorem \ref{genHall}}\label{appB}

The proof of this theorem is based on strong induction on $x$. For $x=1$ the theorem is easy to verify. Assume that the statement of the theorem holds for $1 \leq x \leq x_0$. Then, it suffices to show that the statement also holds for the case that $x=x_0 +1$. There are two different scenarios that we need to show the statement holds for separately.

{\bf Scenario 1.} {\it{There exists a proper and nonempty subset of nodes $\mathcal{S}_1$ of $\mathcal{T}_1$ such that $|N(\mathcal{S}_1)| = |\mathcal{S}_1| + r$} }:

Consider the induced subgraph of $\mathcal{G}$ with the set of nodes $\mathcal{S}_1 \cup N(\mathcal{S}_1)$ and denote it by $\mathcal{G}_1$. Induction hypothesis results in a spanning subgraph $\mathcal{G}_1^{\prime}$ of $\mathcal{G}_1$ that satisfies the desired properties in the statement of the theorem for the corresponding subgraph $\mathcal{G}_1$.

Now, consider the induced subgraph of $\mathcal{G}$ with the set of nodes $\{ \mathcal{T}_1 \cup \mathcal{T}_2 \} \setminus \{ \mathcal{S}_1 \}$ and denote it by $\mathcal{G}_2$. Since $|N(\mathcal{S}_1)| = |\mathcal{S}_1| + r$, we conclude that for each subset of nodes of $ \{\mathcal{T}_1\}   \setminus \{\mathcal{S}_1\}$ we have $|N(\mathcal{S}) \cap \{\mathcal{T}_2   \setminus N(\mathcal{S}_1)\}| \geq |\mathcal{S}| $. The reason is that if we choose a set of nodes including the members of $\mathcal{S} \subset \{\mathcal{T}_1\}   \setminus \{\mathcal{S}_1\}$ plus all nodes in $\mathcal{S}_1$ and use the assumption in the statement of the theorem, it results that $|N(\mathcal{S}) \cap \{\mathcal{T}_2   \setminus N(\mathcal{S}_1)\}| \geq |\mathcal{S}| $. Moreover, induction hypothesis results that there exists a spanning subgraph that satisfies the desired properties in the statement of the theorem for the corresponding subgraph $\mathcal{G}_2$. Now, Lemma \ref{match} results that there exists a spanning subgraph $\mathcal{G}_2^{\prime}$ of the graph $\mathcal{G}_2$ so that every node of $\mathcal{T}_1 \setminus \mathcal{S}_1$ is of degree $r+1$, and also for each subset of nodes of $\mathcal{T}_1 \setminus \mathcal{S}_1$, the inequality \eqref{neigh1} holds and in addition, $\mathcal{G}_2^{\prime}$ includes a perfect matching between the nodes in $\mathcal{T}_1 \setminus \mathcal{S}_1$ and $\mathcal{T}_2 \setminus N(\mathcal{S}_1)$.

Now, consider a spanning subgraph $\mathcal{G}^{\prime}$ of the graph $\mathcal{G}$ that only includes all edges of $\mathcal{G}_1^{\prime}$ and $\mathcal{G}_2^{ \prime}$. We can easily observe that every node of $\mathcal{T}_1 $ is of degree $r+1$. Moreover, for each subset of nodes of $ \mathcal{S}_1$ the inequality \eqref{neigh1} holds, and also for each subset of nodes of $\mathcal{T}_1 \setminus \mathcal{S}_1$ the inequality \eqref{neigh1} holds. Now, consider a subset of nodes $\mathcal{S}$ of $\mathcal{T}_1$. It suffices to show the inequality \eqref{neigh1} holds for $\mathcal{S}$.

Define $\mathcal{S}^{\prime} = \mathcal{S} \cap \mathcal{S}_1$ and $\mathcal{S}^{\prime \prime} = \mathcal{S} \cap \{\mathcal{T}_1 \setminus \mathcal{S}_1\}$. Since for each subset of nodes of $ \mathcal{S}_1$ the inequality \eqref{neigh1} holds, we have $|N(\mathcal{S}) \cap N(\mathcal{S}_1)| \geq |\mathcal{S}^{\prime}| + r$. On the other hand, $\mathcal{G}^{\prime}$ includes a perfect matching between the nodes in $\mathcal{T}_1 \setminus \mathcal{S}_1$ and $\mathcal{T}_2 \setminus N(\mathcal{S}_1)$, and therefore $|N(\mathcal{S}) \cap \{\mathcal{T}_2 \setminus N(\mathcal{S}_1)\}| \geq |\mathcal{S}^{\prime \prime}|$. As a result, $|N(\mathcal{S})| \geq |\mathcal{S}^{\prime}| + r + |\mathcal{S}^{\prime \prime}| = |\mathcal{S}| + r$ and the proof is complete for this scenario.

{\bf Scenario 2:} {\it{For any proper and nonempty subset of nodes of $\mathcal{T}_1$ we have $|N(\mathcal{S})| \geq |\mathcal{S}| +r+1 $}}: 

Consider an arbitrary node $v_1$ and observe that $|N(v_1)| \geq r+1$. Define $\mathcal{S}_1 = \mathcal{T}_1 \setminus \{v_1\}$ and let $\mathcal{G}_1 $ denote the induced subgraph of $\mathcal{G}$ with set of nodes $\mathcal{S}_1 \cup \mathcal{T}_2$ (which is all nodes of graph $\mathcal{G}$ except for the node $v_1$). Induction hypothesis results that there exists a spanning subgraph $\mathcal{G}_1^{\prime}$ of the graph $\mathcal{G}_1$ such that every node of $\mathcal{S}_1$ is of degree $r+1$, and also for each subset of nodes of $\mathcal{S}_1$, the inequality \eqref{neigh1} holds. Now, consider a spanning subgraph $\mathcal{G}^{\prime}$ of the graph $\mathcal{G}$ that includes only all edges of $\mathcal{G}_1^{\prime}$, and also $r+1$ random edges among all edges that are connected to $v_1$. It is clear that $\mathcal{G}^{\prime}$ satisfies all of the desired properties mentioned in the statement of the theorem.

In order to complete the proof of Theorem \ref{genHall}, we need the following lemma as it was mentioned before.

\begin{lemma}\label{match}
Consider a bipartite graph $\mathcal{G}$ with the two sets of nodes, $\mathcal{T}_1$ with $x$ nodes and $\mathcal{T}_2$ with $x+r+y$ nodes, where $x, y, r \in \mathbb{N}$. Suppose that for each subset of the nodes of $\mathcal{T}_1$ we have
\begin{eqnarray}\label{neigh}
|N(\mathcal{S})|  \geq  |\mathcal{S}| + r.
\end{eqnarray}
Moreover, assume that there exists a subset of nodes $\mathcal{S}_0$ of $\mathcal{T}_2$ such that $|\mathcal{S}_0| = x$, and also for every subset of nodes $\mathcal{S}$ of $\mathcal{T}_1$ we have $|N(\mathcal{S}) \cap \mathcal{S}_0| \geq |\mathcal{S}|$. Assume that there exists a spanning subgraph $\mathcal{G}_0$ of $\mathcal{G}$ such that every node of $\mathcal{T}_1$ is of degree $r+1$, and also for each subset of nodes of $\mathcal{T}_1$, the inequality \eqref{neigh} holds. Then, there exists a spanning subgraph $\mathcal{G}^{\prime}$ of $\mathcal{G}$ such that every node of $\mathcal{T}_1$ is of degree $r+1$, and also for each subset of nodes of $\mathcal{T}_1$, the inequality \eqref{neigh} holds and in addition, $\mathcal{G}^{\prime}$ includes a perfect matching between the nodes in $\mathcal{T}_1$ and $\mathcal{S}_0$.
\end{lemma}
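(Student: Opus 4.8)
The plan is to first extract the target matching and then deform the given degree-$(r+1)$ subgraph so that it contains it. Applying Hall's theorem \cite{kierstead1983effective} to the hypothesis $|N(\mathcal{S}) \cap \mathcal{S}_0| \geq |\mathcal{S}|$ yields a matching $M$ saturating $\mathcal{T}_1$ whose every edge lands in $\mathcal{S}_0$; since $|\mathcal{S}_0| = |\mathcal{T}_1| = x$, this $M$ is in fact a perfect matching between $\mathcal{T}_1$ and $\mathcal{S}_0$. Write $M(v) \in \mathcal{S}_0$ for the partner of $v \in \mathcal{T}_1$. I would start from $H = \mathcal{G}_0$, which by hypothesis already has $\deg_H(v) = r+1$ for every $v \in \mathcal{T}_1$ and satisfies \eqref{neigh}, and then force the edges of $M$ into $H$ one vertex at a time while preserving both the degree constraint and \eqref{neigh}. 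When $r = 0$ the statement is immediate, since $\mathcal{G}^{\prime} = M$ already has all required properties, so I will assume $r \geq 1$.

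The deformation step processes each $v \in \mathcal{T}_1$ exactly once. If $(v, M(v)) \in H$ there is nothing to do; otherwise I insert $(v, M(v))$, which raises $\deg_H(v)$ to $r+2$ and cannot destroy \eqref{neigh} (adding edges only enlarges neighbourhoods), and then delete exactly one of the $r+1$ remaining edges $(v,w)$ incident to $v$ (so $w \neq M(v)$) to restore degree $r+1$. Two bookkeeping facts make this safe: a deletion at $v$ removes only an edge whose $\mathcal{T}_1$-endpoint is $v$ and whose $\mathcal{T}_2$-endpoint is not $M(v)$, hence it is never a matching edge nor the already-inserted matching edge of another vertex; and since each vertex is touched once, every matching edge, once present, survives to the end. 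Thus the entire lemma reduces to one claim: among the $r+1$ candidate edges $(v,w)$, at least one can be deleted without violating \eqref{neigh}.

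The main obstacle, and the heart of the argument, is exactly this claim, which I would settle by contradiction through the submodularity of the neighbourhood function. Let $H^{\prime}$ be the current graph just after inserting $(v,M(v))$, and set $g(\mathcal{S}) = |N_{H^{\prime}}(\mathcal{S})| - |\mathcal{S}|$; this $g$ is submodular, \eqref{neigh} is equivalent to $g(\mathcal{S}) \geq r$ for all nonempty $\mathcal{S}$, and I call a nonempty set \emph{tight} when $g(\mathcal{S}) = r$. A short check shows that deleting $(v,w)$ breaks \eqref{neigh} precisely when $w$ is a \emph{private} neighbour of $v$ in some tight set $\mathcal{S} \ni v$, meaning $v$ is the unique neighbour of $w$ inside $\mathcal{S}$. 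Suppose, for contradiction, that every old neighbour $w_1,\dots,w_{r+1}$ of $v$ were critical in this sense, witnessed by tight sets $\mathcal{S}_1,\dots,\mathcal{S}_{r+1}$ each containing $v$. Because $g$ is submodular and all these sets contain $v$, their pairwise intersections are nonempty and again tight, so $\mathcal{S}^{*} = \mathcal{S}_1 \cap \dots \cap \mathcal{S}_{r+1}$ (which still contains $v$) is tight as well, and every $w_j$ stays private to $v$ with respect to $\mathcal{S}^{*}$ since $\mathcal{S}^{*} \subseteq \mathcal{S}_j$. Deleting $v$ from $\mathcal{S}^{*}$ then removes the $r+1$ distinct private neighbours $w_j$ from the neighbourhood, giving
\begin{eqnarray}
g(\mathcal{S}^{*} \setminus \{v\}) \leq \big(|N_{H^{\prime}}(\mathcal{S}^{*})| - (r+1)\big) - \big(|\mathcal{S}^{*}| - 1\big) = g(\mathcal{S}^{*}) - r = 0.
\end{eqnarray}
Since $\deg_{H^{\prime}}(v) = r+2$ forces $g(\{v\}) = r+1 \neq r$, the tight set $\mathcal{S}^{*}$ cannot be $\{v\}$, so $\mathcal{S}^{*} \setminus \{v\}$ is a nonempty set with $g(\mathcal{S}^{*}\setminus\{v\}) \leq 0 < r$, contradicting \eqref{neigh}. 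Hence a deletable $w$ always exists, the swap succeeds for every $v$, and the resulting $H$ is the desired $\mathcal{G}^{\prime}$: every node of $\mathcal{T}_1$ has degree $r+1$, \eqref{neigh} holds throughout, and $H$ contains the perfect matching $M$ between $\mathcal{T}_1$ and $\mathcal{S}_0$.
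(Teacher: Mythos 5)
Your proof is correct, and it takes a genuinely different route from the paper's. The paper proves Lemma \ref{match} by strong induction on $x$, in the style of the classical tight-set proof of Hall's theorem: either some proper nonempty $\mathcal{S}_1 \subseteq \mathcal{T}_1$ satisfies $|N(\mathcal{S}_1) \cap \mathcal{S}_0| = |\mathcal{S}_1|$, in which case the graph is split into the induced subgraphs on $\mathcal{S}_1 \cup \mathcal{T}_2$ and on $(\mathcal{T}_1 \setminus \mathcal{S}_1) \cup \mathcal{T}_2$, the induction hypothesis is applied to each piece (after checking that the matching hypothesis passes to $\mathcal{S}_0 \setminus N(\mathcal{S}_1)$ on the second piece), and the two spanning subgraphs are glued together; or every proper nonempty subset has slack, in which case one vertex $v_0$ is matched to some $u_0 \in N(v_0) \cap \mathcal{S}_0$, $r+1$ of its edges including $(v_0,u_0)$ are retained, and induction handles the remaining $x_0$ vertices. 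You avoid induction on the graph size entirely: you extract the perfect matching $M$ once via Hall's theorem, then deform $\mathcal{G}_0$ by at most $x$ insert-one-edge/delete-one-edge swaps, and the only nontrivial step---that some old edge at $v$ is always deletable---is settled by uncrossing tight sets via submodularity of $\mathcal{S} \mapsto |N(\mathcal{S})|$: if all $r+1$ candidates were blocked, the intersection $\mathcal{S}^{*}$ of the blocking tight sets would itself be tight, and removing $v$ from it would violate \eqref{neigh}; your observation that $\{v\}$ cannot be tight (since $\deg_{H^{\prime}}(v)=r+2$) is precisely what makes $\mathcal{S}^{*}\setminus\{v\}$ nonempty, and your bookkeeping that deletions at $v$ never touch matching edges of other vertices is what keeps the iteration sound. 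As for what each approach buys: the paper's recursion uses only elementary counting, but must thread the full hypothesis package (including the existence of $\mathcal{G}_0$) through a two-scenario case analysis at every level; your argument is constructive, yielding an explicit algorithm that transforms $\mathcal{G}_0$ into $\mathcal{G}^{\prime}$ in at most $x$ swaps, isolates exactly where each hypothesis is used, and the uncrossing step is reusable---essentially the same argument (greedily delete edges at any vertex of degree exceeding $r+1$, with removability guaranteed by intersecting blocking tight sets) would also give a direct, induction-free proof of Theorem \ref{genHall} itself.
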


\begin{proof}
We prove the lemma using strong induction on $x$. For $x=1$ the lemma is easy to verify. Assume that the statement of lemma holds for $1 \leq x \leq x_0$. Then, we only need to show that lemma holds for the case that $x=x_0 +1$. There are two scenarios and we prove the statement for these two scenarios separately as follows.

{\bf Scenario 1:} {\it There exists a proper and nonempty subset of nodes $\mathcal{S}_1$ of $\mathcal{T}_1$ such that $|N(\mathcal{S}_1) \cap \mathcal{S}_0| = |\mathcal{S}_1|$}: 

For simplicity in notation, define $\mathcal{S}_1^{\prime} \triangleq \mathcal{T}_1 \setminus  \mathcal{S}_1$. In this scenario, we use the induction hypothesis for $\mathcal{S}_1$ and $\mathcal{S}_1^{\prime}$ separately (since $|\mathcal{S}_1| \leq x_0$ and $|\mathcal{S}_1^{\prime}| \leq x_0$). Define the sets $\mathcal{S}_0^{\prime} = N(\mathcal{S}_1) \cap \mathcal{S}_0$ and $\mathcal{S}_0^{\prime \prime} = \mathcal{S}_0 \setminus \mathcal{S}_0^{\prime}$. Consider the induced subgraph $\mathcal{G}_1$ of the graph $\mathcal{G}$ where the set of vertices of $\mathcal{G}_1$ is $\mathcal{T}_2 \cup \mathcal{S}_1$. Assumption of the lemma results that there exists a spanning subgraph of $\mathcal{G}_1$ such that every node of $\mathcal{S}_1$ is of degree $r+1$, and also for each subset of nodes of $\mathcal{S}_1$, the inequality \eqref{neigh} holds (by considering all the edges of the induced subgraph with vertices $\mathcal{T}_2 \cup \mathcal{S}_1$ that also exist in $\mathcal{G}_0$). Induction hypothesis results that there exists a spanning subgraph $\mathcal{G}_0^{\prime}$ of $\mathcal{G}_1$ such that every node of $\mathcal{S}_1$ is of degree $r+1$, and also for each subset of nodes of $\mathcal{S}_1$, the inequality \eqref{neigh} holds and in addition, $\mathcal{G}_0^{\prime}$ includes a perfect matching between nodes of $\mathcal{S}_1$ and nodes of $\mathcal{S}_0^{\prime}$.

Consider the induced subgraph $\mathcal{G}_1^{\prime}$ of the graph $\mathcal{G}$ where the set of vertices of $\mathcal{G}_2$ is $\mathcal{T}_2 \cup \mathcal{S}_1^{\prime}$. Again, assumption of the lemma results that there exists a spanning subgraph of $\mathcal{G}_1^{\prime}$ such that every node of $\mathcal{S}_1^{\prime}$ is of degree $r+1$, and also for each subset of nodes of $\mathcal{S}_1^{\prime}$, the inequality \eqref{neigh} holds (by considering all the edges of the induced subgraph with vertices $\mathcal{T}_2 \cup \mathcal{S}_1^{\prime}$ that also exist in $\mathcal{G}_0$). Also, observe that since $|N(\mathcal{S}_1) \cap \mathcal{S}_0| = |\mathcal{S}_1|$, for every subset of nodes $\mathcal{S}$ of $\mathcal{S}_1^{\prime}$ we have $|N(\mathcal{S}) \cap \mathcal{S}_0^{\prime \prime}| \geq |\mathcal{S}|$. As a result, induction hypothesis results that there exists a spanning subgraph $\mathcal{G}_0^{\prime \prime}$ of $\mathcal{G}_1^{\prime}$ such that every node of $\mathcal{S}_1^{\prime}$ is of degree $r+1$, and also for each subset of nodes of $\mathcal{S}_1^{\prime}$, the inequality \eqref{neigh} holds. In addition, $\mathcal{G}_0^{\prime \prime}$ includes a perfect matching between nodes of $\mathcal{S}_1^{\prime}$ and nodes of $\mathcal{S}_0^{\prime \prime}$.

Now, consider a spanning subgraph $\mathcal{G}^{\prime}$ of the graph $\mathcal{G}$ that only includes  all  edges of $\mathcal{G}_0^{\prime}$ and $\mathcal{G}_0^{\prime \prime}$. It can be verified that $\mathcal{G}^{\prime}$ satisfies all of the desired properties in the statement of the lemma and therefore the proof is complete for this case.

{\bf Scenario 2:} {\it For any proper and nonempty subset of nodes of $\mathcal{T}_1$ we have $|N(\mathcal{S}) \cap \mathcal{S}_0| \geq |\mathcal{S}| +1 $}:

In this case, consider an arbitrary vertex of $\mathcal{T}_1$ and denote it by $v_0$, and also define $\mathcal{S}_1 = \mathcal{T}_1 \setminus \{v_0 \}$. Hence, we have $|N(\mathcal{S}_1) \cap \mathcal{S}_0| \geq |\mathcal{S}_1| +1 $ and since for any $\mathcal{S}$ we have $|N(\mathcal{S}) \cap \mathcal{S}_0| \leq |\mathcal{S}_0|$ we conclude that $|N(\mathcal{S}_1) \cap \mathcal{S}_0| = |\mathcal{S}_1| +1 $. Also, according to the assumptions of the lemma we know $N(v_0) \geq r+1$ and $|N(v_0) \cap \mathcal{S}_0 | \geq 1$. We choose an arbitrary node among the nodes in $|N(v_0) \cap \mathcal{S}_0 |$ and denote it by $u_0$. Construct a graph $\mathcal{G}_0^{\prime \prime}$ with nodes $\mathcal{T}_2 \cup \{v_0\}$ and $r+1$ edges among edges that are connected to $v_0$ including the edge between $v_0$ and $u_0$, i.e., $(v_0,u_0)$.

Now, consider the induced subgraph $\mathcal{G}_1$ of the graph $\mathcal{G}$ where the set of vertices of $\mathcal{G}_1$ is $\mathcal{T}_2 \cup \mathcal{S}_1 $, i.e., all  nodes in $\mathcal{G}$ except for the node $v_0$. Assumption of the lemma results that there exists a spanning subgraph of $\mathcal{G}_1$ such that every node of $\mathcal{S}_1$ is of degree $r+1$, and also for each subset of nodes of $\mathcal{S}_1$, the inequality \eqref{neigh} holds (by considering all edges of the induced subgraph with vertices $\mathcal{T}_2 \cup \mathcal{S}_1$ that also exist in $\mathcal{G}_0$). Induction hypothesis results that there exists a spanning subgraph $\mathcal{G}_0^{\prime}$ of $\mathcal{G}_1$ such that every node of $\mathcal{S}_1$ is of degree $r+1$, and also for each subset of nodes of $\mathcal{S}_1$, the inequality \eqref{neigh} holds and in addition, $\mathcal{G}_0^{\prime}$ includes a perfect matching between nodes of $\mathcal{S}_1$ and nodes of $\mathcal{S}_0 \setminus \{u_0\}$.

Finally, consider a spanning subgraph $\mathcal{G}^{\prime}$ of the graph $\mathcal{G}$ that only includes all edges of $\mathcal{G}_0^{\prime}$ and $\mathcal{G}_0^{\prime \prime}$. The constructed graph $\mathcal{G}^{\prime}$ satisfies all of the mentioned properties in the lemma.
\end{proof}

\section{Proof of Claim in Example \ref{lemexfinuni}}\label{appC}

In the following we show that given the rank-$2$ sampled matrix in Example \ref{lemexfinuni}, there are exactly two completions. Note that the following decomposition always holds for some values of $r_1,r_2,\dots,r_6$ and $x_1,x_2,\dots,x_8$:

\begin{center}
\begin{tabular}{ |c|c|c|c| } 
 \hline
$1$ & \ \ \ \ & $1$ & $-\frac{1}{2}$ \\ \hline
$-4$ & $2$ & $-1$ &  \\ \hline
$0$ & $1$ &  & $2$ \\ \hline
$1$ &  & $4$ &  \\ \hline
 & $4$ & $-2$ & $\frac{3}{2}$ \\
 \hline
\end{tabular}
 \  \ \ $=$ \ \ \ 
\begin{tabular}{ |c|c| } 
 \hline
$1$ &  $0$ \\ \hline
$0$ & $1$ \\ \hline
$r_1$ & $r_2$ \\ \hline
$r_3$ & $r_4$  \\ \hline
$r_5$ & $r_6$ \\
 \hline
\end{tabular}
 \  \ \ $\times$ \ \ \ 
 \begin{tabular}{ |c|c|c|c| } 
 \hline
$x_1$ & $x_2$ & $x_3$ & $x_4$ \\ \hline
$x_5$ & $x_6$ & $x_7$ & $x_8$ \\ 
 \hline
\end{tabular}
\end{center}

Note that the $2 \times 2$ identity matrix in the above decomposition represents canonical basis defined in Definition \ref{canonical1}. The first two rows of $\mathbf{U}$ in the above decomposition result the following:
\begin{subequations}
\begin{eqnarray}
1 &=& x_1, \\
1 &=& x_3, \\
-\frac{1}{2} &=& x_4, \\
-4 &=& x_5, \\
2 &=& x_6, \\
-1 &=& x_7. 
\end{eqnarray}
\end{subequations}
Then, the decomposition can be simplified as
\begin{center}
\begin{tabular}{ |c|c|c|c| } 
 \hline
$0$ & $1$ &  & $2$ \\ \hline
$1$ & \ \ \ \ & $4$ & \ \ \ \  \\ \hline
 \ \ \ \ & $4$ & $-2$ & $\frac{3}{2}$ \\
 \hline
\end{tabular}
 \  \ \ $=$ \ \ \ 
\begin{tabular}{ |c|c| } 
 \hline
$r_1$ & $r_2$ \\ \hline
$r_3$ & $r_4$  \\ \hline
$r_5$ & $r_6$ \\
 \hline
\end{tabular}
 \  \ \ $\times$ \ \ \ 
 \begin{tabular}{ |c|c|c|c| } 
 \hline
$1$ & $x_2$  & $1$ & $-\frac{1}{2}$ \\ \hline
$-4$ & $2$ & $-1$ & $x_8$ \\ 
 \hline
\end{tabular}
\end{center}
Therefore, we have following system of equations
\begin{subequations}
\begin{eqnarray}
0 &=& r_1 - 4r_2, \label{eqex1} \\
1 &=& x_2r_1 + 2r_2, \label{eqex2} \\
2 &=& -\frac{1}{2}r_1 + x_8r_2, \label{eqex3} \\
1 &=& r_3 - 4r_4, \label{eqex4} \\
4 &=& r_3 - r_4, \label{eqex5} \\
4 &=& x_2r_5 + 2r_6, \label{eqex6} \\
-2 &=& r_5 - r_6, \label{eqex7} \\
\frac{3}{2} &=& -\frac{1}{2}r_5 + x_8r_6. \label{eqex8} 
\end{eqnarray}
\end{subequations}
Observe that $r_3=5$ and $r_4=1$ can be determined uniquely from \eqref{eqex4} and \eqref{eqex5}. 
 Note that $r_1 = 4r_2 $ and $r_5 = (r_6-2) $ can be concluded from \eqref{eqex1} and \eqref{eqex7}, respectively. Then, by substituting $r_1 \leftarrow 4r_2 $ in \eqref{eqex2} and \eqref{eqex3} and substituting $r_5 \leftarrow (r_6-2) $ in \eqref{eqex6} and \eqref{eqex8} we obtain:
\begin{subequations}
\begin{eqnarray}
1 &=& 4x_2r_2 + 2r_2, \label{eqex2xx} \\
2 &=& -2r_2 + x_8r_2, \label{eqex3xx} \\
4 &=& x_2(r_6-2) + 2r_6, \label{eqex6xx} \\
\frac{3}{2} &=& -\frac{1}{2}(r_6-2) + x_8r_6. \label{eqex8xx} 
\end{eqnarray}
\end{subequations}
Similarly, $r_2 = \frac{2}{x_8-2} $ and $r_6 = \frac{1}{2x_8-1} $ can be concluded from \eqref{eqex3xx} and \eqref{eqex8xx}, respectively. Hence, substituting $r_2 \leftarrow \frac{2}{x_8-2} $ , $r_6 \leftarrow \frac{1}{2x_8-1} $ in \eqref{eqex2xx} and \eqref{eqex6xx} results in the following system of equations
\begin{subequations}
\begin{eqnarray}
x_8  &=& 8x_2 + 6, \label{eqex2xxx} \\
8x_8 -4 &=& x_2 - 2x_2(2x_8-1) + 2. \label{eqex6xxx} 
\end{eqnarray}
\end{subequations}
Finally, \eqref{eqex2xxx} results $x_8 = 8x_2 + 6 $, and therefore substituting $x_8 \leftarrow 8x_2 + 6 $ in \eqref{eqex6xxx} results
\begin{eqnarray}
8(8x_2 + 6) -4 &=& x_2 - 2x_2(2(8x_2 + 6)-1) + 2, \label{eqex6xxxx} 
\end{eqnarray}
which can be simplified as
\begin{eqnarray}
32x_2^2 + 85x_2 + 42 &=& 0. \label{eqex6xxxxx} 
\end{eqnarray}
Therefore, $x_2 \in \{-2,-\frac{21}{32}\}$. Given $x_2$, all other variables can be determined uniquely recursively as above, and the following completions of $\mathbf{U}$ can be obtained as the only possible rank-$2$ completions of $\mathbf{U}$.
\begin{center}
\begin{tabular}{ |c|c|c|c| } 
 \hline
$1$ & $-\frac{21}{32}$ & $1$ & $-\frac{1}{2}$ \\ \hline
$-4$ & $2$ & $-1$ & $\frac{3}{4}$ \\ \hline
$0$ & $1$ & -$\frac{24}{5}$ & $2$ \\ \hline
$1$ & $-\frac{41}{32}$ & $4$ & $-\frac{7}{4}$ \\ \hline
$-8$ & $4$ & $-2$ & $\frac{3}{2}$ \\
 \hline
\end{tabular}
 \  \ \ $=$ \ \ \ 
\begin{tabular}{ |c|c| } 
 \hline
$1$ &  $0$ \\ \hline
$0$ & $1$ \\ \hline
$-\frac{32}{5}$ & $-\frac{8}{5}$ \\ \hline
$5$ & $1$  \\ \hline
$0$ & $2$ \\
 \hline
\end{tabular}
 \  \ \ $\times$ \ \ \ 
 \begin{tabular}{ |c|c|c|c| } 
 \hline
$1$ & $-\frac{21}{32}$ & $1$ & $-\frac{1}{2}$ \\ \hline
$-4$ & $2$ & $-1$ & $\frac{3}{4}$ \\ 
 \hline
\end{tabular}
\end{center}
and
\begin{center}
\begin{tabular}{ |c|c|c|c| } 
 \hline
$1$ & $-2$ & $1$ & $-\frac{1}{2}$ \\ \hline
$-4$ & $2$ & $-1$ & $-10$ \\ \hline
$0$ & $1$ & -$\frac{1}{2}$ & $2$ \\ \hline
$1$ & $-8$ & $4$ & $-\frac{25}{2}$ \\ \hline
$-\frac{39}{21}$ & $4$ & $-2$ & $\frac{3}{2}$ \\
 \hline
\end{tabular}
 \  \ \ $=$ \ \ \ 
\begin{tabular}{ |c|c| } 
 \hline
$1$ &  $0$ \\ \hline
$0$ & $1$ \\ \hline
$-\frac{2}{3}$ & $-\frac{1}{6}$ \\ \hline
$5$ & $1$  \\ \hline
$-\frac{43}{21}$ & $-\frac{1}{21}$ \\
 \hline
\end{tabular}
 \  \ \ $\times$ \ \ \ 
 \begin{tabular}{ |c|c|c|c| } 
 \hline
$1$ & $-2$ & $1$ & $-\frac{1}{2}$ \\ \hline
$-4$ & $2$ & $-1$ & $-10$ \\ 
 \hline
\end{tabular}
\end{center}

\bibliographystyle{IEEETran}
\bibliography{bib}

\end{document}